\numberwithin{equation}{section}
\date{\today}
\theoremstyle{plain}
\newtheorem{theorem}{Theorem}[section]
\newtheorem{lemma}{Lemma}[section]
\newtheorem{corollary}{Corollary}[section]
\theoremstyle{definition}
\newtheorem*{acknowledgment}{Acknowledgment}
\theoremstyle{remark}
\newtheorem{remark}{Remark}[section]
\newcommand{\supp}{\operatorname{supp}}
\newcommand{\dist}{\operatorname{dist}}
\newcommand{\diam}{\operatorname{diam}}
\newcommand{\esssup}{\operatorname*{ess\,sup}}
\newcommand{\bR}{\mathbb R}
\newcommand{\bP}{\boldsymbol{P}}
\newcommand{\bQ}{\mathbf Q}
\newcommand\cB{\mathcal{B}}
\newcommand\cG{\mathcal{G}}
\newcommand\cX{\mathcal{X}}
\newcommand\cY{\mathcal{Y}}
\newcommand\sC{\mathscr{C}}
\newcommand\sE{\mathscr{E}}
\newcommand\sL{\mathscr{L}}
\newcommand\sP{\mathscr{P}}
\newcommand\sV{\mathscr{V}}
\newcommand\sW{\mathscr{W}}
\providecommand{\ip}[1]{\langle#1\rangle}
\providecommand{\Ip}[1]{\left\langle#1\right\rangle}
\providecommand{\set}[1]{\{#1\}}
\providecommand{\Set}[1]{\left\{#1\right\}}
\providecommand{\bigset}[1]{\bigl\{#1\bigr\}}
\providecommand{\abs}[1]{\lvert#1\rvert}
\providecommand{\Abs}[1]{\left\lvert#1\right\rvert}
\providecommand{\bigabs}[1]{\bigl\lvert#1\bigr\rvert}
\providecommand{\norm}[1]{\lVert#1\rVert}
\providecommand{\Norm}[1]{\left\lVert#1\right\rVert}
\providecommand{\tri}[1]{\lvert\!\lvert\!\lvert#1\rvert\!\rvert\!\rvert}
\renewcommand{\vec}[1]{\boldsymbol{#1}}
\renewcommand{\qedsymbol}{$\blacksquare$}
\begin{document}
\title[Green's function for Robin problem]{Green's functions for elliptic and parabolic systems with Robin-type boundary conditions}

\author[J. Choi]{Jongkeun Choi}
\address[J. Choi]{Department of Mathematics, Yonsei University, Seoul 120-749, Republic of Korea}
\email{cjg@yonsei.ac.kr}

\author[S. Kim]{Seick Kim}
\address[S. Kim]{Department of Mathematics, Yonsei University, Seoul 120-749, Republic of Korea}
\email{kimseick@yonsei.ac.kr}
\thanks{This work is partially supported by NRF Grant No. 2012R1A1A2040411.}

\begin{abstract}
The aim of this paper is to investigate Green's function for parabolic and elliptic systems satisfying a possibly nonlocal Robin-type boundary condition.
We construct Green's function for parabolic systems with time-dependent coefficients satisfying a possibly nonlocal Robin-type boundary condition assuming that weak solutions of the system are locally H\"{o}lder continuous in the interior of the domain, and as a corollary we construct Green's function for elliptic system with a Robin-type condition.
Also, we obtain Gaussian bound for Robin Green's function under an additional assumption that weak solutions of Robin problem are locally bounded up to the boundary.
We provide some examples satisfying such a local boundedness property, and thus have Gaussian bounds for their Green's functions.
\end{abstract}

\maketitle

\section{Introduction}
In this article, we are concerned with Green's functions for second-order elliptic and parabolic systems in divergence form subject to (possibly nonlocal) Robin-type boundary conditions.
Let $\Omega$ be a bounded Sobolev extension domain (e.g. a Lipschitz domain or a locally uniform domain) in $\bR^n$ ($n\ge 2$) and $Q=\Omega\times (a, b)$, where $-\infty \le a <b \le \infty$.
We consider the following parabolic operator
\[
\sL\vec u= \frac{\partial \vec u}{\partial t}- \sum_{\alpha, \beta =1}^n  \frac{\partial}{\partial x_\alpha} \left( A^{\alpha\beta}\frac{\partial \vec u}{\partial x_\beta}\right)
\]
acting on a vector valued function $\vec u=(u^1,\ldots, u^m)^\top$ defined on $Q$, where
the $A^{\alpha\beta}=A^{\alpha\beta}(x,t)$ are $m\times m$ matrix valued functions on $\bR^{n+1}$ with entries $a^{\alpha\beta}_{ij}$ satisfying the  uniform strong ellipticity condition; i.e., there is a constant $\lambda \in (0,1]$ such that for all $(x,t)\in \bR^{n+1}$ and any vectors $\vec \xi=(\xi^i_\alpha)$ and $\vec \eta=(\eta^i_\alpha)$ in $\bR^{mn}$, we have
\begin{equation}
\label{B2.eq1}
\sum_{\alpha,\beta=1}^n \sum_{i,j=1}^m a^{\alpha\beta}_{ij} \xi^j_\beta \xi^i_\alpha \ge \lambda \abs{\vec \xi}^2, \quad \Abs{\sum_{\alpha,\beta=1}^n \sum_{i,j=1}^m a^{\alpha\beta}_{ij}\xi^j_\beta \eta^i_\alpha}\le \lambda^{-1}\abs{\vec \xi}\abs{\vec \eta}.
\end{equation}
The adjoint operator $\sL^{*}$ of  $\sL$ is given by
\[
\sL^{*} \vec u=  -\frac{\partial \vec u}{\partial t} -\frac{\partial}{\partial x_\alpha} \left(A^{\beta\alpha}(x,t)^\top \frac{\partial \vec u}{\partial x_\beta}\right).
\]
We shall assume that the operators $\sL$ and $\sL^{*}$ has a property such that weak solutions of $\sL\vec u =0$ or $\sL^{*} \vec u=0$ are locally H\"older continuous in the interior of the domain.
In fact, this property is satisfied by a large class of operators.
For example, a celebrated theorem by J. Nash shows that this property always holds when $m=1$.
Other examples include the case when the coefficients of $\sL$ are uniformly continuous or, more generally, belong to VMO in $x$-variable.
The (possibly nonlocal) Robin-type boundary conditions are formally of the type
\begin{equation}			\label{eq0.02i}
\left(\frac{\partial \vec u}{\partial \nu} +\Theta(t) \vec u \right) \Bigg|_{\partial\Omega}= 0\quad \text{for }\; t \in  (a, b),
\end{equation}
where $\partial /\partial \nu$ is the usual (outward) co-normal derivative
\[
\frac{\partial}{\partial \nu}=\sum_{\alpha,\beta =1}^ n n_\alpha  A^{\alpha\beta} \frac{\partial}{\partial x_\beta}
\]
and $\Theta=\Theta(t)$ acts in appropriate Sobolev spaces on the boundary $\partial\Omega$.
We shall also require that $\Theta$ is non-degenerate in certain sense (see Section~\ref{B} below).

Our investigation is largely motivated by a very recent interesting article by F. Gesztesy, M. Mitrea, and R. Nichols \cite{GMN}, where the authors showed, among other things, Gaussian heat kernel bounds assuming $\Theta \ge 0$ in the scalar case (i.e. $m=1$).
Their argument is based on a careful analysis on the resolvent and semigroup of a self-adjoint realization of the corresponding elliptic operator in $L^2(\Omega)$.
In this article, we follow an approach that is different from theirs and based on techniques developed in recent papers \cite{CDK, CDK2, CK2}.
We construct Green's function for $\sL$ in $\Omega\times(-\infty,\infty)$ satisfying the Robin boundary condition \eqref{eq0.02i}; i.e. an $m\times m$ matrix valued function $\vec  \cG(x,t,y,s)$ that is, as a function of $(x,t)$ with $(y,s)$ fixed, a generalized solution of the problem
\[
\left\{
\begin{array}{cl}
\sL \vec  \cG(x,t,y,s) = 0 \qquad & \text{in }\; \Omega \times (s,\infty),\\
\frac{\partial}{\partial \nu} \vec \cG(x,t,y,s) +\Theta(t)\vec  \cG(x,t,y,s) = 0
 &  \text{on }\; \partial\Omega\times (s,\infty),\\
\vec  \cG(x,t,y,s)\equiv \delta_y(x) \vec I \quad& \text{on }\; \Omega\times \set{t=s},
\end{array}
\right. 
\]
where $\delta_{y}(\cdot)$ is Dirac delta function concentrated at $y$ and $\vec I$ is the $m\times m$ identity matrix.
More precise definition of Robin Green's function is given in Section~\ref{2.rp}.
In the case when the coefficients are time-independent,  $\vec K(x,y,t):= \vec  \cG(x,t,y,0)$ is called a Robin heat kernel.
By using a Robin heat kernel, we construct Green's function $\vec G(x,y)$ for elliptic systems satisfying Robin boundary conditions.
Also, we are interested in  the following global Gaussian estimate for the Robin Green's function:
There exist positive constants $C$ and $\kappa$ such that for all $t>s$ and $x,y \in \Omega$, we have
\begin{equation}        \label{eq1.05a}
\abs{\vec \cG (x,t,y,s)} \le \frac{C}{\min\left\{\sqrt{t-s}, \diam \Omega\right\}^{n}}\exp\Set{-\frac{\kappa\abs{x-y}^2}{t-s}}.
\end{equation}
If we assume further that the operator $\sL$ has the property that weak solutions of $\sL \vec u=0$ in $Q$ with zero Robin data on the lateral boundary are locally bounded, then we show that the Robin Green's function has the Gaussian upper bound \eqref{eq1.05a}.
We show that this local boundedness property is, for example, satisfied when
\begin{enumerate}[i)]
\item
$m=1$ (the scalar case) and $\Theta=M_\theta$, the operator of multiplication with a nonnegative measurable function $\theta$ on $\partial\Omega\times (-\infty,\infty)$ that belongs to a suitable Lebesgue class.

\item
$\Omega$ is a Lipschitz domain in $\bR^2$, $\Theta= M_{\vec \theta}$, where $\theta$ is an $m\times m$ matrix-valued $L^\infty$-function, and the coefficients of $\sL$ and $\theta$ are $t$-independent.

\item
$\Omega$ is a $C^1$ domain in $\bR^n$ ($n\ge 3$), the coefficients of $\sL$ are uniformly VMO in $x$, and $\Theta= M_{\vec \theta}$, where $\theta$ is an $m\times m$ matrix-valued $L^\infty$-function.
\end{enumerate}
By using the Gaussian estimate \eqref{eq1.05a}, we prove that elliptic Robin Green's function has the global bound
\begin{enumerate}[i)]
\item
$n=2$
\[
\abs{\vec G(x,y)} \le C\left\{1+\ln \left(\frac{\diam \Omega}{\abs{x-y}}\right)\right\},
\]

\item
$n\ge 3$
\[
\abs{\vec G(x,y)} \le C\abs{x-y}^{2-n}.
\]
\end{enumerate}

The Green's function for a scalar parabolic equation with real measurable coefficients in the free space was first studied by Nash \cite{Nash} and its two-sided Gaussian bounds were obtained by Aronson \cite{Aronson}.
There are vast literature regarding heat kernels for second order elliptic operators satisfying Dirichlet or Neumann boundary conditions.
We find it very hard to list them all here and just refer to monographs by Davies \cite{Davies89}, Ouhabaz \cite{Ouhabaz05}, and references therein.
We also mention related monographs by Robinson \cite{Robinson}, Grigor'yan \cite{Gri}, and Gyrya and Saloff-Coste \cite{GSC11}.
It is well known that Aronson's bounds are no longer available for a parabolic equation with complex valued coefficient when $n\ge 3$.
Auscher \cite{Auscher} obtained an upper Gaussian bound of the heat kernel for an elliptic operator whose coefficients are complex $L^\infty$-perturbation of real coefficients; see \cite{AMcT, AT, AT2}.
Hofmann and Kim \cite{HK04} extended Auscher's result to parabolic systems with time-dependent coefficients.
For heat kernels satisfying Robin conditions, we already mentioned the paper by Gesztesy, Mitrea, and Nichols \cite{GMN}, where one can find a survey of literature devoted to Robin boundary conditions, among which we particularly mention papers by Arendt and ter Elst \cite{AtE} and by Daners \cite{Daners}.

The novelty of this paper in constructing Green's function lies in that we allow the operators to have time-dependent coefficients and that our domains are more general than Lipschitz domains.
Also, we obtain Gaussian upper bounds of Robin Green's functions for parabolic systems as well as for scalar equation.
In Gesztesy et al. \cite{GMN}, the authors also considered parabolic systems, but Gaussian bounds were established only for the scalar case.
Moreover, as an important application of our result on the Gaussian bounds, we obtain the usual bounds for elliptic Green's function.
Especially, in the two dimensional case, we get a logarithmic bound for Green's function of elliptic systems satisfying a pointwise Robin condition without assuming any regularity on the coefficients, and we believe this is new.
In our paper, the key for obtaining Gaussian bounds lies in establishing local boundedness property for weak solutions and we allocate a large portion of the paper to prove local boundedness properties for the above mentioned three special but important cases.
We mention that the approach adopted in this paper is similar to that developed in recent papers \cite{CDK, CDK2, CK2}, where Green's functions for time-dependent parabolic systems subject to Dirichlet or Neumann condition were investigated with almost minimal assumptions on the coefficients and domains.
It seems to us that there is no literature dealing with Green's function satisfying Robin boundary condition for time-dependent parabolic systems in such generality, and we hope that this paper contributes  towards filling the gap and serves as a reference.

The organization of paper is as follows.
In Section~\ref{B}, we introduce notation and definitions used in this paper.
We mostly use the same the notation as in \cite{CK2} to help readers because we frequently refer to it.
We also introduce the assumptions that are needed for our construction of  Green's function with Robin boundary condition and for obtaining Gaussian bounds in this section.
In Section~\ref{B3}, we state our main theorems and some corollaries.
Section~\ref{sec:4.2enf} is devoted to discussion of Green's function for elliptic systems with Robin boundary condition.
In Section~\ref{D}, we prove the main theorems stated in Sections~\ref{B3} and \ref{sec:4.2enf}, and in Section~\ref{app}, we provide proofs for some technical lemmas used in the paper.

\section{Preliminaries}		\label{B}

\subsection{Basic notations}
We use $X=(x,t)$ to denote a point in $\bR^{n+1}$; $x=(x_1,\ldots, x_n)$ will always be a point in $\bR^n$.
We also write $Y=(y,s)$, $X_0=(x_0,t_0)$, etc.
We define the parabolic distance between the points $X=(x,t)$ and $Y=(y,s)$ in $\bR^{n+1}$ as
\[
d(X,Y)=\abs{X-Y}_\sP:= \max(\abs{x-y}, \sqrt{\abs{t-s}}),
\]
where $\abs{\,\cdot\,}$ denotes the usual Euclidean norm.
We write $\abs{X}_\sP=\abs{X-0}_\sP$.
For an open set $Q \subset\bR^{n+1}$, we denote
\[
d_X=\dist(X,\partial_p Q)=\inf\bigset{\abs{X-Y}_\sP : Y\in \partial_p Q};\quad \inf \emptyset = \infty,
\]
where $\partial_p Q$ denotes the usual parabolic boundary of $Q$.
We use the following notions for basic cylinders in $\bR^{n+1}$:
\begin{align*}
Q^-_r(X)&=B_r(x) \times (t-r^2,t),\\
Q^+_r(X)&=B_r(x) \times (t,t+r^2),\\
Q_r(X)&= B_r(x)\times (t-r^2,t+r^2),
\end{align*}
where $B_r(x)$ is the usual Euclidean ball of radius $r$ centered at $x\in \bR^n$.
We use the notation
\[
\fint_Q u=\frac{1}{\abs{Q}}\int_Q u.
\]

\subsection{Function spaces}
Throughout the article, we assume that $\Omega$ is a bounded extension domain for $H^1$ functions; i.e. there exists a linear operator $E : W^{1,2}(\Omega) \to W^{1,2}(\bR^n)$ such that
\begin{equation}		\label{extop}
\norm{Eu}_{L^2(\bR^n)} \le \sE_0 \norm{u}_{L^2(\Omega)},\quad
\norm{Eu}_{W^{1,2}(\bR^n)} \le \sE_0 \norm{u}_{W^{1,2}(\Omega)}.
\end{equation}
Such domains include Lipschitz domains, and also locally uniform domains considered by P. Jones; see Rogers \cite{Rogers}.
We identify $H^1(\Omega)= W^{1,2}(\Omega)$ and $H^1_0(\Omega)=W^{1,2}_0(\Omega)$.
We define $H^{1/2}(\partial \Omega)$ as the normed space consisting of all elements of $H^1(\Omega)/ H^1_0(\Omega)$, with the norm
\begin{equation}		\label{2.eq0}
\norm{v}_{H^{1/2}(\partial \Omega)}:=\inf \Set{\norm{u}_{H^1(\Omega)}:u-v\in H^1_0(\Omega)}.
\end{equation}
When $\partial\Omega$ has enough regularity, trace theorems and extension theorems \cite{Tr} readily yield the standard interpretation of $H^{1/2}(\partial\Omega)$.
For a function $u\in H^1(\Omega)$, we let $u\vert_{\partial\Omega} \in H^{1/2}(\partial\Omega)$ to be the equivalence class $u+H^1_0(\Omega)$ and call it the trace of $u$ on $\partial\Omega$.
By abuse of notation, we sometimes write $u$ for $u\vert_{\partial\Omega}$ when there is no danger of confusion.
The spaces $H^{-1}(\Omega)$ and $H^{-1/2}(\partial \Omega)$ denote the Banach spaces consisting of bounded linear functionals on $H^1(\Omega)$ and $H^{1/2}(\partial \Omega)$, respectively.

To avoid confusion, spaces of functions defined on $Q\subset \bR^{n+1}$ will be always written in \emph{script letters} throughout the article.
$\sL_{q,r}(Q)$ is the Banach space consisting of all measurable functions on $Q=\Omega\times (a,b)$ with a finite norm
\[
\norm{u}_{\sL_{q,r}(Q)}=\left(\int_a^b\left(\int_{\Omega} \abs{u(x,t)}^q\, dx\right)^{r/q}dt\right)^{1/r},
\]
where $q\ge 1$ and $r\ge 1$.
$\sL_{q,q}(Q)$ will be denoted by $\sL_q(Q)$.
By $\sC^{\mu,\mu/2}(Q)$ we denote the set of all bounded measurable functions $u$ on $Q$ for which $\abs{u}_{\mu,\mu/2;Q}$ is finite, where
we define the parabolic H\"older norm as follows:
\begin{align*}
\abs{u}_{\mu,\mu/2;Q}&=[u]_{\mu,\mu/2;Q}+\abs{u}_{0;Q}\\
&:= \sup_{\substack{X, Y \in Q\\ X\neq Y}}\frac{\abs{u(X)-u(Y)}}{\abs{X-Y}^\mu_\sP}+\sup_{X\in Q}\;\abs{u(X)}, \quad \mu\in(0,1].
\end{align*}
We write $u\in \sC^\infty_c(Q)$ (resp. $\sC^\infty_c(\bar Q)$) if $u$ is an infinitely differentiable function on $\bR^{n+1}$ with a compact support in $Q$ (resp. $\bar Q$).
We write $D_i u=D_{x_i} u=\partial u/\partial x_i$ ($i=1,\ldots,n$) and $u_t=\partial u /\partial t$.
We also write $Du=D_x u$ for the vector $(D_1 u,\ldots, D_n u)$.
We write $Q(t_0)$ for the set of all points $(x,t_0)$ in $Q$ and $I(Q)$ for
the set of all $t$ such that $Q(t)$ is nonempty.
We denote
\[
\tri{u}_{Q}^2= \int_{Q}  \abs{D_x u}^2 \,dx \,dt+\esssup\limits_{t\in I(Q)} \int_{Q(t)} \abs{u(x,t)}^2\,dx.
\]
The space $\sW^{1,0}_q(Q)$ denotes the Banach space consisting of functions $u\in \sL_q(Q)$ with weak derivatives $D_i u \in \sL_q(Q)$ ($i=1,\ldots,n$) with the norm
\[
\norm{u}_{\sW^{1,0}_q(Q)}=\norm{u}_{\sL_q(Q)}+\norm{D_x u}_{\sL_q(Q)}
\]
and by $\sW^{1,1}_q(Q)$ the Banach space with the norm
\[
\norm{u}_{\sW^{1,1}_q(Q)}=\norm{u}_{\sL_q(Q)}+\norm{D_x u}_{\sL_q(Q)}+ \norm{u_t}_{\sL_q(Q)}.
\]
In the case when $Q$ has a finite height (i.e., $Q\subset \bR^{n}\times(-T,T)$ for some $T<\infty$), we define $\sV_2(Q)$ as the Banach space consisting of all elements of $\sW^{1,0}_2(Q)$ having a finite norm $\norm{u}_{\sV_2(Q)}:=  \tri{u}_{Q}$ and the space $\sV^{1,0}_2(Q)$ is obtained by completing the set $\sW^{1,1}_2(Q)$ in the norm of $\sV_2(Q)$.
When $Q$ has an infinite height, we say that $u \in \sV_2(Q)$ (resp. $\sV^{1,0}_2(Q)$) if $u \in \sV_2(Q_T)$ (resp. $\sV^{1,0}_2(Q_T)$) for all $T>0$, where $Q_T=Q \cap \set{ \abs{t} <T}$, and $\tri{u}_Q <\infty$.
Note that this definition allows that $1\in \sV^{1,0}_2(\Omega\times(-\infty,\infty))$ when $\abs{\Omega}<\infty$.
Finally, we write $u\in \sL_{q,loc}(Q)$ if $u \in \sL_q(Q')$ for all $Q'\Subset Q$ and similarly define $\sW^{1,0}_{q,loc}(Q)$, etc.

\subsection{Robin boundary value problem}
We use the notation $\cB(\cX,\cY)$ for bounded linear operators between two Banach spaces $\cX$ and $\cY$.
We let 
\begin{equation}		\label{eq2.2p}
\Theta(t) \in \cB(H^{1/2}(\partial \Omega)^m, H^{-1/2}(\partial \Omega)^m)\quad \text{for a.e. } t\in \bR
\end{equation}
and assume that
\begin{equation}		\label{eq2.3p}
\esssup_{-\infty<t<\infty}\; \norm{\Theta(t)}_{\cB(H^{1/2}(\partial \Omega)^m, H^{-1/2}(\partial \Omega)^m)} <\infty.
\end{equation}
For $\vec u, \vec v \in H^1(\Omega)^m$, we write 
\[
\ip{\Theta(t)\vec u, \vec v} := \left[\Theta(t) (\vec u\vert_{\partial\Omega})\right](\vec v\vert_{\partial\Omega}).
\]
The following is then an immediate consequence of \eqref{2.eq0}, \eqref{eq2.2p}, and \eqref{eq2.3p}:
\[
\abs{\ip{\Theta(t) \vec u, \vec v}} \le C \norm{\vec u}_{H^1(\Omega)} \norm{\vec v}_{H^1(\Omega)},\quad \forall \vec u, \vec v \in H^1(\Omega)^m
\]
for a.e. $t\in \bR$.
The adjoint operator $\Theta^{*}(t) \in  \cB(H^{1/2}(\partial \Omega)^m, H^{-1/2}(\partial \Omega)^m)$ is defined by the usual relation $\ip{\Theta^{*}(t)\vec u,\vec v}=\ip{\Theta(t) \vec v,\vec u}$.
Let $Q=\Omega\times (a,b)$ and $S =\partial\Omega\times (a,b)$,  where $-\infty \le a <b \le \infty$.
We say that $\vec u$ is a weak solution of
\begin{equation}		\label{RP}
\tag{RP}
\sL \vec u = \vec f\;\text{ in }\; Q,\quad \partial \vec u/\partial  \nu+\Theta \vec u =0\;\text{ on }\;S
\end{equation}
if  $\vec u \in \sV_2(Q)^m$ and satisfies
\[
-\int_Q \vec u\cdot \vec \phi_t\,dX+\int_Q A^{\alpha\beta}D_\beta \vec u\cdot D_\alpha \vec \phi\,dX+\int_a^b \ip{\Theta\vec u,\vec \phi}\,dt
=\int_Q \vec f\cdot \vec \phi \,dX
\]
for $\vec \phi\in \sC^\infty(\bar Q)^m$ that vanishes for $t=a$ and $t=b$.
Similarly, we say that $\vec u$ is a weak solution of
\begin{equation}		\label{RPs}
\tag{RP*}
\sL^{*} \vec u = \vec f\;\text{ in }\; Q,\quad \partial \vec u/\partial  \nu^{*}+\Theta^{*} \vec u =0\;\text{ on }\;S
\end{equation}
if  $\vec u \in \sV_2(Q)^m$ and satisfies
\[
\int_Q \vec u\cdot \vec \phi_t\,dX+\int_Q A^{\alpha\beta}D_\beta \vec \phi \cdot D_\alpha \vec u \,dX+\int_a^b \ip{\Theta\vec \phi,\vec u}\,dt
=\int_Q \vec f\cdot \vec \phi \,dX
\]
for $\vec \phi\in \sC^\infty(\bar Q)^m$ that vanishes for $t=a$ and $t=b$.

For $a\neq -\infty$, we say that $\vec u$ is a weak solution of the problem
\begin{equation}			\label{B2.eq2}
\left\{
\begin{aligned}
\sL\vec u=\vec f &\quad \text{in }\, Q,\\
\partial \vec u/\partial \nu + \Theta \vec u =0 &\quad \text{on }\, S,\\
\vec u=\vec \psi_0 &\quad \text{on }\, \Omega\times \set{a}, 
\end{aligned}
\right.
\end{equation}
if $\vec u\in \sV_2^{1,0}(Q)^m$ and satisfies for all $t_1 \in [a,b]$ the identity 
\begin{multline}		\label{eq2.07ej}
\int_\Omega \vec u(\cdot, t_1)\cdot \vec \phi(\cdot,t_1)\,dx-\int_a^{t_1}\!\!\!\int_\Omega \vec u\cdot \vec \phi_t\,dX+\int_a^{t_1}\!\!\!\int_\Omega A^{\alpha\beta}D_\beta \vec u\cdot D_\alpha \vec \phi\,dX+\int_a^{t_1} \ip{\Theta\vec u,\vec \phi}\,dt\\
=\int_a^{t_1}\!\!\!\int_\Omega \vec f\cdot \vec \phi \,dX +\int_\Omega \vec \psi_0\cdot \vec \phi(\cdot,a)\,dx,\quad \forall \vec \phi \in \sC^\infty(\bar Q)^m. 
\end{multline}
Similarly, we say that $\vec u$ is a weak solution of the (backward) problem
\begin{equation}			\label{B2.eq3}
\left\{
\begin{aligned}
\sL^*\vec u=\vec f &\quad \text{in }\, Q,\\
\partial \vec u/\partial \nu^{*} + \Theta^{*} \vec u=0 &\quad \text{on }\, S,\\
\vec u=\vec \psi_0 &\quad \text{on }\, \Omega\times \set{b}, 
\end{aligned}
\right.
\end{equation}
if $\vec u\in \sV_2^{1,0}(Q)^m$ and satisfies for all $t_1 \in [a,b]$ the identity 
\begin{multline*}
\int_\Omega \vec u(\cdot, t_1)\cdot \vec \phi(\cdot,t_1)\,dx+\int_{t_1}^b\!\!\!\int_\Omega \vec u\cdot \vec \phi_t\,dX+\int_{t_1}^b\!\!\!\int_\Omega A^{\alpha\beta}D_\beta \vec \phi\cdot D_\alpha \vec u\,dX+\int_{t_1}^b \ip{\Theta \vec \phi,\vec u}\,dt\\
=\int_{t_1}^b\!\!\!\int_\Omega \vec f\cdot \vec \phi \,dX +\int_\Omega \vec \psi_0\cdot \vec \phi(\cdot,b)\,dx,\quad \forall \vec \phi \in \sC^\infty(\bar Q)^m. 
\end{multline*}

\subsection{Robin Green's function}		\label{2.rp}
Let $Q:=\Omega\times (-\infty,\infty)$.
We say that an $m\times m$ matrix valued function $\vec  \cG(X,Y)=\vec  \cG(x,t,y,s)$, with entries $\cG_{ij}: Q\times Q\mapsto [-\infty,\infty]$, is a Green's function for Robin problem \eqref{RP} if it satisfies the following properties.
\begin{enumerate}[a)]
\item
For all $Y\in Q$, we have $\vec  \cG(\cdot,Y)\in \sW^{1,0}_{1,loc}(Q)^{m\times m}$ and $\vec \cG(\cdot,Y) \in \sV_2(Q\setminus Q_r(Y))^{m\times m}$ for any $r>0$. 

\item
For all $Y\in Q$,  we have $\sL \vec \cG(\cdot,Y)=\delta_Y \vec I$ in $Q$ and $\partial\vec \cG(\cdot, Y)/\partial \nu + \Theta \vec \cG(\cdot, Y)=0$ on $S=\partial Q$
in the sense that for any $\vec\phi\in \sC^\infty_c(\bar Q)^m$, we have
\[
-\int_Q \vec \cG_{\cdot k}(\cdot,Y) \cdot \vec \phi_t\,dX+\int_Q A^{\alpha\beta} D_\beta \vec \cG_{\cdot k}(\cdot,Y)\cdot D_\alpha \vec \phi\,dX+\int_{-\infty}^\infty \ip{\Theta \vec \cG_{\cdot k}(
\cdot,Y)\vert_{\partial\Omega},\vec \phi\vert_{\partial\Omega}}\,dt=\phi^k(Y),
\]
where $\vec \cG_{\cdot k}(X, Y)$ is the $k$-th ($k=1,\ldots, m$) column of $\vec \cG(X,Y)$.
\item
For any $\vec f\in \sC^\infty_c(\bar Q)^m$, the function $\vec u$ given by
\[
\vec u(X):=\int_Q \vec \cG(Y,X)^\top\vec f(Y)\,dY
\]
is a weak solution of \eqref{RPs}.
\end{enumerate}
We note that part c) of the above definition gives uniqueness of Robin Green's function.

\subsection{Basic assumptions}
We make the following assumptions (H1) and (H2) to construct the Robin Green's function in $Q = \Omega\times (-\infty,\infty)$.
\begin{enumerate}[{\bf{H}1.}]
\item
We assume that $\Omega$ is an extension domain for $H^1$ function so that \eqref{extop} holds for some constant $\sE_0$.
We assume that $\Theta$ satisfies \eqref{eq2.2p}, \eqref{eq2.3p} and there exist constants $\tilde{\lambda} \in (0,\lambda)$ and $\vartheta_0>0$ such that for all $\vec u \in H^1(\Omega)^m$, we have
\begin{equation}		\label{2.eq4}
\vartheta_0 \norm{\vec u}_{H^1(\Omega)}^2  \le  \tilde{\lambda} \norm{D\vec u}_{L^2(\Omega)}^2+\ip{\Theta(t) \vec u,\vec u}
\end{equation}
for a.e. $t \in (-\infty,\infty)$.
\item
There exist constants $\mu_0\in(0,1]$ and $A_0>0$ such that if $\vec u$ is a weak solution of $\sL\vec u= 0$ (resp. $\sL^{*} \vec u=0$) in $\tilde{Q}=Q^{-}_R(X)$ (resp. $\tilde{Q}=Q^{+}_R(X)$),  where $X\in Q$ and $0<R< \dist(X,\partial_p Q)$, then we have
\[
[\vec u]_{\mu_0,\mu_0/2; \frac{1}{2}\tilde{Q}}\le A_0R^{-\mu_0}\left(\fint_{\tilde{Q}}\,\abs{\vec u(Y)}^2\,dY \right)^{1/2},
\]
where $\frac{1}{2}\tilde{Q}=Q^{-}_{R/2}(X)$ $\big($resp. $\frac{1}{2}\tilde{Q}=Q^{+}_{R/2}(X)\big)$.

\end{enumerate}

The following assumption (H3) is used to obtain global Gaussian estimates for the Robin Green's function.
We point out that the integral appearing in (H3) is different from those in the condition (A3) of \cite{CK2} and the condition (LB) of \cite{CDK2}.
\begin{enumerate}[{\bf{H}1.}]
\setcounter{enumi}{2}

\item
For any $\vec u, \vec v \in H^1(\Omega)^m$ satisfying $\vec u \cdot \vec v \ge 0$ a.e.  in $\Omega$, we have
\begin{equation}		\label{eq2.09local}
\ip{\Theta(t)\vec u, \vec v} \ge 0\quad \text{for a.e. }t \in(-\infty,\infty).
\end{equation}
Also, there exist constants $A_1, R_1>0$ such that if $\vec u$ is a weak solution of
\begin{align*} 
\sL\vec u=0 \quad \text{in }\, \Omega\times(a,b), &\qquad \partial \vec u/\partial \nu +\Theta \vec u =0 \quad \text{on }\, \partial\Omega\times(a,b),\\
\big ( \text{resp.} \quad \sL^*\vec u=0 \quad \text{in }\, \Omega\times(a,b),&\qquad \partial \vec u/\partial \nu^{*} +\Theta^{*} \vec u=0 \quad \text{on }\, \partial\Omega\times(a,b), \big )
\end{align*}
then for a.e. $x \in \overline \Omega$, we have
\begin{align*}
\abs{\vec u(x,b)} &\le A_1 R^{-(n+2)/2}\norm{\vec u}_{\sL_2(\Omega \times (b-R^2,b))},\\
\big ( \text{resp.} \quad \abs{\vec u(x,a)} &\le A_1 R^{-(n+2)/2} \norm{\vec u}_{\sL_2(\Omega \times (a, a+R^2))}, \big )
\end{align*}
where $R=\min(\sqrt{b-a}, R_1)$.
\end{enumerate} 

\begin{remark}		\label{rmk2.1pe}
We note that if $\Theta \in \cB(H^{1/2}(\partial \Omega)^m, H^{-1/2}(\partial \Omega)^m)$ satisfies
\begin{equation}		\label{2.eq4-1}
\vartheta_0 \norm{\vec u}_{H^1(\Omega)}^2  \le  \tilde{\lambda} \norm{D\vec u}_{L^2(\Omega)}^2+\ip{\Theta \vec u,\vec u},
\end{equation}
then $\norm{\cdot}_{\Theta;\Omega}$ defined by
\[
\norm{\vec u}_{\Theta;\Omega}:=\left(
 \tilde{\lambda} \norm{D \vec u}_{L^2(\Omega)}^2 + \ip{\Theta \vec u, \vec u}\right)^{1/2}
\]
gives an equivalent norm in $H^1(\Omega)^m$. 
In fact, 
\[
(\vec u, \vec v)_{\Theta}:=\tilde{\lambda} \int_\Omega \sum_{\alpha=1}^n D_\alpha \vec u \cdot D_\alpha \vec v \,dx + \frac{1}{2}\left(\ip{\Theta \vec u, \vec v}+\ip{\Theta \vec u, \vec v}\right)
\]
is an equivalent inner product on $H^1(\Omega)^m$.
\end{remark}

\begin{remark}		
Suppose $\Theta \in \cB(H^{1/2}(\partial \Omega)^m, H^{-1/2}(\partial \Omega)^m)$ has the following properties:
\begin{enumerate}[i)]
\item
$\ip{\Theta \vec u, \vec u}\ge 0$ for all $\vec u \in H^1(\Omega)^m$.
\item 
$\ip{\Theta \vec \xi, \vec \xi}=0$ for $\vec \xi \in \bR^m$ implies $\vec \xi=0$.
\end{enumerate}
Then, one can obtain \eqref{2.eq4-1} by a usual contradiction argument based on Rellich-Kondrachov compactness theorem (cf. proof of Lemma~\ref{lem2.2pe}).
\end{remark}

\begin{remark}		
Suppose $\Theta=\Theta^{(1)}+\Theta^{(2)}$, where $\Theta^{(1)}$ satisfies \eqref{2.eq4-1} and $\ip{\Theta^{(2)} \vec u, \vec u}\ge 0$ for all $\vec u \in H^1(\Omega)^m$.
Then, $\Theta$ satisfies \eqref{2.eq4-1} as well.
\end{remark}

\begin{remark}		\label{rmk2.4pe}
Below are some examples of cases when the condition (H2) holds.
\begin{enumerate}[i)]
\item
The scalar case ($m=1$) is a consequence of De Giorgi-Moser-Nash theory.

\item
$n=2$ and the coefficients of $\sL$ are time-independent (see \cite[Theorem~3.3]{Kim}).

\item
The coefficients of $\sL$ belong to $VMO_x$ (see \cite[Lemma~2.3]{CDK});
we say that $f$ belongs to $VMO_x$ if $\lim_{\rho\to 0} \omega_\rho(f)=0$, where
\[
\omega_\rho(f)=\sup_{X\in\bR^{n+1}}\sup_{r \leq \rho} \fint_{t-r^2}^{t+r^2}\!\!\!\fint_{B_r(x)} \Abs{f(y,s)-\fint_{B_r(x)}f(\cdot,s)}\,dy\,ds.
\]
\end{enumerate}
\end{remark}

\begin{remark}		\label{rmk2.5pe}
Note that \eqref{eq2.09local} in (H3) requires that $\Theta(t) \ge 0$ for a.e. $t\in \bR$; i.e.,  for a.e. $t\in \bR$, we have $\ip{\Theta(t) \vec u, \vec u} \ge 0$ for all $\vec u \in H^1(\Omega)$.
It should be clear that \eqref{eq2.09local} is satisfied if $\Theta$ is defined by
\[
\ip{\Theta(t)\vec u, \vec v} := \int_{\partial\Omega}  \theta(x,t) \vec u(x) \cdot \vec v(x) \,dS_x,\quad \forall \vec u, \vec v \in H^{1/2}(\partial\Omega)^m,
\]
where $\theta$ is a (symmetric) nonnegative definite $m\times m$ matrix-valued function.
Also, in (H3) we may take $R_1=1$ or $R_1=\diam \Omega$ (because we assume $\Omega$ is bounded), possibly at the cost of increasing the constant $A_1$.
\end{remark}
\subsection{Auxiliary results}
We recall that the following multiplicative inequality holds for any $u\in W^{1,2}(\bR^n)$ with $n\ge 1$ (see \cite[Theorem 2.2]{LSU}):
\begin{equation}		\label{eq2.10ar}
\norm{u}_{L^{2(n+2)/n}}\le C(n)\norm{Du}_{L^2}^{n/(n+2)}\norm{u}_{L^2}^{2/(n+2)}.
\end{equation}
If we assume (H1), then by \eqref{eq2.10ar},  \eqref{extop}, and Remark~\ref{rmk2.1pe}, for any $\vec u\in H^1(\Omega)^m$ and $-\infty<t< \infty$, we have
\begin{multline*}
\norm{\vec u}_{L^{2(n+2)/n}(\Omega)}
\le C \norm{D(E \vec u)}_{L^2(\bR^n)}^{n/(n+2)} \norm{E \vec u}_{L^2(\bR^n)}^{2/(n+2)} \le C \norm{\vec u}_{H^1(\Omega)}^{n/(n+2)} \norm{\vec u}_{L^2(\Omega)}^{2/(n+2)}\\
\le C\left(\tilde{\lambda} \norm{D\vec u}_{L^2(\Omega)}^2+\ip{\Theta(t)\vec u, \vec u}\right)^{n/(2(n+2))}\norm{\vec u}_{L^2(\Omega)}^{2/(n+2)}. 
\end{multline*}
Therefore, there exists a constant $\gamma_{\Theta}$ such that 
\begin{equation}
\norm{\vec u}_{L^{2(n+2)/n}(\Omega)}
\le \gamma_\Theta \left(\tilde{\lambda} \norm{D\vec u}_{L^2(\Omega)}^2+\ip{\Theta(t)\vec u, \vec u}\right)^{n/(2(n+2))}\norm{\vec u}_{L^2(\Omega)}^{2/(n+2)},\quad \forall \vec u\in H^1(\Omega)^m.
\label{eq2.11ar}
\end{equation}
Let $Q=\Omega\times (a,b)$ with $-\infty \le a <b \le \infty$.
We define
\begin{equation}	\label{vtq}
\tri{\vec u}_{\Theta; Q}:=\left( \esssup\limits_{a<t<b} \int_{\Omega} \abs{\vec u(x,t)}^2\,dx+\tilde{\lambda} \int_Q  \abs{D \vec u}^2 \,dx\,dt+ \int_a^b \ip{\Theta \vec u,\vec u}\,dt \right)^{1/2}.
\end{equation}
We note that by Remark~\ref{rmk2.1pe}, we have
\begin{equation}		\label{eq2.19ar}
\tri{\vec u}_Q =\norm{\vec u}_{\sV_2(Q)} \le C(\vartheta_0) \tri{\vec u}_{\Theta;Q}.
\end{equation}
The membership of $\vec u$ in  $\sV_2(Q)^m$ implies that $\vec u(\cdot, t) \in H^1(\Omega)^m$ for a.e. $t \in (a,b)$, and thus, we derive from \eqref{eq2.11ar} that
\begin{multline*}
\int_a^b\!\!\!\int_\Omega \abs{\vec u}^{2(n+2)/n}\,dx\,dt
\le \gamma_\Theta^{2(n+2)/n} \left(\esssup_{a<t<b} \int_\Omega \abs{\vec u(x,t)}^2\,dx \right)^{2/n}\times\\
\left(\tilde{\lambda} \int_a^b\!\!\!\int_\Omega \abs{D\vec u}^2\,dx\,dt+ \int_a^b \ip{\Theta(t)\vec u, \vec u}\,dt\right).
\end{multline*}
Therefore, by the definition \eqref{vtq}, we obtain
\begin{equation}		\label{eq2.14ar}
\norm{\vec u}_{\sL_{2(n+2)/n}(Q)}\le \gamma_{\Theta} \tri{\vec u}_{\Theta; Q}, \quad \forall \vec u\in \sV_2(Q)^m.
\end{equation}
We also recall that for any $u\in \sV_2(\Omega\times (a,b))$, we have
\begin{align}		\label{2.eq4d}
\norm{u}_{\sL_{2(n+2)/n}(\Omega\times (a,b))} &\le \left(2\beta +\left\{(b-a)^{\frac{n}{2}}\abs{\Omega}^{-1}\right\}^{\frac{1}{n+2}}\right)\tri{u}_{\Omega\times (a,b)},\\
\label{eq2.16wine}
\norm{u}_{\sL_{2(n+1)/n}(\partial\Omega\times (a,b))} &\le \beta\left(1+\left\{(b-a)^{\frac{n}{2}}\abs{\Omega}^{-1}\right\}^{\frac{1}{n+1}}\right) \tri{u}_{\Omega\times (a,b)},
\end{align}
where $\beta=\beta(n, \Omega, \partial\Omega)$; see \cite[pp. 77 -- 78]{LSU}.

Finally, we state some useful lemmas whose proofs will be given in Section~\ref{app}.
\begin{lemma}		\label{B3.lem1}
Assume  (H1) and let $\vec \psi_0 \in L^2(\Omega)^m$ and $\vec f \in \sL_{2,1}(Q)^m$, where $Q=\Omega\times (a,b)$ and $-\infty<a<b<\infty$.
Then, there exists a unique weak solution $\vec u$ in $\sV^{1,0}_2(Q)^m$ of the problem \eqref{B2.eq2}. 
If $\norm{\vec f}_{\sL_{2(n+2)/(n+4)}(Q)}<\infty$, then the weak solution $\vec u$ of the problem \eqref{B2.eq2} satisfies an energy inequality
\begin{equation}			\label{enieq}
\tri{\vec u}_{\Theta;Q}  \le C \left\{ \norm{\vec f}_{\sL_{2(n+2)/(n+4)}(Q)}+ \norm{\vec\psi_0}_{L^2(\Omega)} \right\},
\end{equation}
where $C$ depends only on $n, m, \lambda$ and parameters in (H1).
A similar statement is true for the problem \eqref{B2.eq3}.
\end{lemma}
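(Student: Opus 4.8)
The plan is the classical Galerkin construction for parabolic systems (cf.\ Ladyzhenskaya--Solonnikov--Ural'tseva), the only genuinely new ingredients being the boundary form $\ip{\Theta\,\cdot\,,\,\cdot\,}$ and the fact that $\Omega$ is merely an extension domain. Since $\Omega$ is a bounded extension domain, \eqref{extop} and the Rellich--Kondrachov theorem give a compact embedding $H^1(\Omega)^m\hookrightarrow L^2(\Omega)^m$; fix an orthonormal basis $\set{\vec w_k}_{k\ge 1}$ of $L^2(\Omega)^m$ consisting of functions in $H^1(\Omega)^m$ whose span is dense in $H^1(\Omega)^m$. For each $N$ I would seek $\vec u_N(x,t)=\sum_{k=1}^N c^N_k(t)\vec w_k(x)$ satisfying, for a.e.\ $t\in(a,b)$ and every $j\le N$,
\[
\frac{d}{dt}c^N_j(t)+\int_\Omega A^{\alpha\beta}(\cdot,t)D_\beta\vec u_N\cdot D_\alpha\vec w_j\,dx+\ip{\Theta(t)\vec u_N,\vec w_j}=\int_\Omega\vec f(\cdot,t)\cdot\vec w_j\,dx,\qquad c^N_j(a)=\int_\Omega\vec\psi_0\cdot\vec w_j\,dx .
\]
This is a linear Carath\'eodory ODE system whose coefficients lie in $L^\infty(a,b)$ by \eqref{eq2.3p} and \eqref{B2.eq1} and whose inhomogeneity lies in $L^1(a,b)$ (as $\vec f\in\sL_{2,1}(Q)^m$), so together with the a priori bound below it has an absolutely continuous solution on $[a,b]$.

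Multiplying the $j$-th equation by $c^N_j$, summing over $j\le N$, and integrating over $(a,t_1)$ yields the energy identity
\[
\tfrac12\norm{\vec u_N(\cdot,t_1)}_{L^2(\Omega)}^2+\int_a^{t_1}\!\!\int_\Omega A^{\alpha\beta}D_\beta\vec u_N\cdot D_\alpha\vec u_N\,dX+\int_a^{t_1}\ip{\Theta\vec u_N,\vec u_N}\,dt=\int_a^{t_1}\!\!\int_\Omega\vec f\cdot\vec u_N\,dX+\tfrac12\norm{\vec u_N(\cdot,a)}_{L^2(\Omega)}^2 .
\]
By \eqref{B2.eq1} (and $\tilde\lambda<\lambda$) the left side dominates $\tfrac12\norm{\vec u_N(\cdot,t_1)}_{L^2}^2+\tilde\lambda\int_a^{t_1}\!\!\int_\Omega\abs{D\vec u_N}^2+\int_a^{t_1}\ip{\Theta\vec u_N,\vec u_N}$, and the last two terms are $\ge0$ by \eqref{2.eq4}; since $\norm{\vec u_N(\cdot,a)}_{L^2}\le\norm{\vec\psi_0}_{L^2}$, a Gronwall-type argument in $t_1$ followed by reinserting the resulting $L^\infty_tL^2_x$-bound gives $\tri{\vec u_N}_{\Theta;Q}\le C(\norm{\vec f}_{\sL_{2,1}(Q)}+\norm{\vec\psi_0}_{L^2(\Omega)})$ with $C=C(n,m,\lambda,\text{(H1)})$, uniformly in $N$. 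By \eqref{eq2.19ar} this bounds $\vec u_N$ in $\sW^{1,0}_2(Q)^m$ and in $L^\infty((a,b);L^2(\Omega)^m)$, so along a subsequence $\vec u_N$ converges weakly in $\sW^{1,0}_2(Q)^m$ and weak-$*$ in $L^\infty((a,b);L^2(\Omega)^m)$ to some $\vec u\in\sV_2(Q)^m$, with $\tri{\vec u}_{\Theta;Q}\le\liminf\tri{\vec u_N}_{\Theta;Q}$ by lower semicontinuity. Testing the Galerkin identity against $\varphi(t)\vec w_j$ with $\varphi\in C^1([a,b])$, $\varphi(b)=0$, passing to the limit, and then using density, shows that $\vec u$ satisfies \eqref{eq2.07ej} for all $t_1\in[a,b]$; from this, the uniform bounds, and the standard weak-continuity-in-time argument of LSU, one obtains $\vec u\in\sV^{1,0}_2(Q)^m$ together with the attainment of the initial datum $\vec\psi_0$.

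For uniqueness, if $\vec u_1,\vec u_2$ both solve \eqref{B2.eq2}, their difference $\vec w$ solves the problem with $\vec f=0$ and $\vec\psi_0=0$; inserting a Steklov time-average of $\vec w$ into \eqref{eq2.07ej} and letting the averaging radius tend to $0$ (legitimate since $\vec w\in\sV^{1,0}_2(Q)^m$) gives $\tfrac12\norm{\vec w(\cdot,t_1)}_{L^2(\Omega)}^2+\int_a^{t_1}\!\!\int_\Omega A^{\alpha\beta}D_\beta\vec w\cdot D_\alpha\vec w\,dX+\int_a^{t_1}\ip{\Theta\vec w,\vec w}\,dt=0$, and \eqref{B2.eq1} together with \eqref{2.eq4} forces $\vec w\equiv 0$. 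For the energy inequality, assuming now $\norm{\vec f}_{\sL_{2(n+2)/(n+4)}(Q)}<\infty$, the same Steklov argument applied to $\vec u$ itself gives $\tri{\vec u}_{\Theta;Q}^2\le C\Set{\Abs{\int_Q\vec f\cdot\vec u\,dX}+\norm{\vec\psi_0}_{L^2(\Omega)}^2}$; since $\tfrac{n+4}{2(n+2)}+\tfrac{n}{2(n+2)}=1$, H\"older on $Q$ and \eqref{eq2.14ar} yield $\Abs{\int_Q\vec f\cdot\vec u\,dX}\le\gamma_\Theta\norm{\vec f}_{\sL_{2(n+2)/(n+4)}(Q)}\tri{\vec u}_{\Theta;Q}$, and Young's inequality then gives \eqref{enieq}. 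The backward problem \eqref{B2.eq3} reduces to the forward one for $\sL^{*}$ via the change of variable $t\mapsto -t$.

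The main obstacle is the functional-analytic bookkeeping of the passage to the limit: the norm $\tri{\,\cdot\,}_{\Omega\times(a,b)}$ carries an $\esssup$ in time, so $\sV_2(Q)$ is not reflexive, and one must combine weak convergence in $\sW^{1,0}_2(Q)^m$ with weak-$*$ convergence in $L^\infty((a,b);L^2(\Omega)^m)$, identify the two limits, and then upgrade the pointwise-in-$t_1$ identity \eqref{eq2.07ej} to genuine membership in $\sV^{1,0}_2(Q)^m$, i.e.\ to continuity of $t\mapsto\vec u(\cdot,t)$ into $L^2(\Omega)^m$ and exact attainment of the initial value. Together with the Steklov-averaging justification of testing with the solution itself, this is the only place requiring real care; everything else is routine for parabolic systems, with \eqref{2.eq4}, \eqref{eq2.19ar}, and \eqref{eq2.14ar} supplying exactly the coercivity and the Sobolev embedding needed to handle the boundary term and the source term, respectively.
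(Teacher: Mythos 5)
Your proposal follows the paper's route exactly — Galerkin approximation, uniform $\tri{\cdot}_{\Theta;Q}$ bounds via coercivity \eqref{2.eq4}, passage to a weak limit, upgrade to $\sV_2^{1,0}$ via the Ladyzhenskaya--Solonnikov--Ural'tseva argument, and Steklov averaging to justify testing with the solution itself. The high-level structure is correct.

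The place where you explicitly defer ("legitimate since $\vec w\in\sV^{1,0}_2(Q)^m$", "the standard weak-continuity-in-time argument of LSU") is, however, precisely the only part of the lemma that is not a verbatim repetition of LSU, and the paper does not take it for granted. The LSU argument involves either mollifying in time (strong continuity into $L^2$, pp.\ 156--159) or inserting Steklov averages $\vec u_h$ into the weak identity and letting $h\to 0$, and both steps introduce terms like $\int \ip{\Theta(t)\vec u(\cdot,t),\vec u_{h_1}(\cdot,t)-\vec u_{h_2}(\cdot,t)}\,dt$ or $\int_0^{t_1}\fint_\tau^{\tau+h}\ip{\Theta(t)\vec u(\cdot,t),\vec u_h(\cdot,\tau)}\,dt\,d\tau$, where the nonlocal boundary operator $\Theta(t)$ does not commute with the time-mollification in any obvious way. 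The paper verifies this by appealing to the vector-valued Bochner-integral commutation $\ip{\Theta(t)\vec u,\int_\alpha^\beta\vec v(\cdot,\tau)\,d\tau}=\int_\alpha^\beta\ip{\Theta(t)\vec u,\vec v(\cdot,\tau)}\,d\tau$ (its Eq.\ (6.4)), then rewrites the resulting double integral as a single $t$-integral against $\chi_h * \vec v - \vec v$, and bounds it by $\bignorm{\chi_h*\vec v-\vec v}_{\sL_2 H^1}$, which tends to zero. Without spelling this out, your proof leaves the central new technicality unproved. Everything else (the $\sL_{2,1}$ vs.\ $\sL_{2(n+2)/(n+4)}$ bounds, H\"older with \eqref{eq2.14ar}, Young, the $t\mapsto -t$ reduction for \eqref{B2.eq3}) matches the paper.
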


\begin{lemma}		\label{lem2.2pe}
Let $\Omega$ be a Lipschitz domain and $Q=\Omega\times (-\infty,\infty)$.
Suppose 
\begin{equation}	\label{eq2.09ex}
\theta \in \sL_{p,\infty}(\partial Q)^{m\times m}, \;\text{ where  $p=n-1$ if  $n\ge 3$ and $p\in (1,\infty]$ if $n=2$}.
\end{equation}
Let $\Theta=M_\theta$, the operator of multiplication with $\theta$ defined by
\[
\ip{\Theta(t)\vec u, \vec v} := \int_{\partial\Omega}  \theta(x,t) \vec u(x) \cdot \vec v(x) \,dS_x,\quad \forall \vec u, \vec v \in H^{1/2}(\partial\Omega)^m.
\]
Suppose  that there is $\delta>0$ such that for a.e. $t$, we have
\begin{equation}		\label{eq2.10ex}
\inf_{\vec e \in \bR^m,\; \abs{\vec e}=1} \left(\int_{\partial\Omega} \theta(x,t) \,dS_x \right) \vec e \cdot \vec e\ge \delta> 0.
\end{equation}
If either $\theta \vec \xi \cdot \vec \xi \ge 0$ for all $\vec \xi\in \bR^m$ or $\norm{\theta}_{\sL_{p,\infty}}$ is sufficiently small, then (H1) holds.
\end{lemma}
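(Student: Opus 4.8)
The hypothesis (H1) asks for three things: that $\Omega$ be an $H^1$-extension domain, which is part of the assumptions here; that $\Theta$ satisfy \eqref{eq2.2p} and \eqref{eq2.3p}; and that the G{\aa}rding-type inequality \eqref{2.eq4} hold with some $\tilde\lambda\in(0,\lambda)$ and $\vartheta_0>0$. The first is free, and the second is routine: for a bounded Lipschitz domain the boundary trace--Sobolev inequality gives $\norm{v|_{\partial\Omega}}_{L^q(\partial\Omega)}\le C\norm{v}_{H^1(\Omega)}$ with $q=\tfrac{2(n-1)}{n-2}$ if $n\ge3$ and any $q<\infty$ if $n=2$, so H\"older's inequality yields
\[
\Abs{\ip{\Theta(t)\vec u,\vec v}}=\Abs{\int_{\partial\Omega}\theta(\cdot,t)\,\vec u\cdot\vec v\,dS}\le \norm{\theta(\cdot,t)}_{L^p(\partial\Omega)}\norm{\vec u}_{L^q(\partial\Omega)}\norm{\vec v}_{L^q(\partial\Omega)}\le C\norm{\theta}_{\sL_{p,\infty}}\norm{\vec u}_{H^1(\Omega)}\norm{\vec v}_{H^1(\Omega)},
\]
the exponent $p=n-1$ being precisely the one making $\tfrac1p+\tfrac2q=1$; since the bound is uniform in $t$ this gives \eqref{eq2.2p}--\eqref{eq2.3p} (and, via the definition \eqref{2.eq0} of $H^{1/2}(\partial\Omega)$, the stated mapping property). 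The whole content of the lemma is thus \eqref{2.eq4}.

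To prove \eqref{2.eq4} I would split $\vec u$ into a constant and a mean-zero part. Assume $\Omega$ connected (else argue component by component), and write $\vec u=\vec u_\Omega+\vec w$ with $\vec u_\Omega:=\fint_\Omega\vec u\in\bR^m$ and $\vec w:=\vec u-\vec u_\Omega$. The Poincar\'e--Wirtinger inequality gives $\norm{\vec w}_{H^1(\Omega)}\le C\norm{D\vec u}_{L^2(\Omega)}$, whence $\norm{\vec w}_{L^q(\partial\Omega)}\le C\norm{D\vec u}_{L^2(\Omega)}$ by the trace--Sobolev inequality above. Since only the symmetric part $S:=\tfrac12(\theta+\theta^\top)$ survives in the cross term,
\[
\ip{\Theta(t)\vec u,\vec u}=\left(\int_{\partial\Omega}\theta(\cdot,t)\,dS\right)\vec u_\Omega\cdot\vec u_\Omega+2\int_{\partial\Omega}S(\cdot,t)\,\vec u_\Omega\cdot\vec w\,dS+\int_{\partial\Omega}\theta(\cdot,t)\,\vec w\cdot\vec w\,dS.
\]
Here the non-degeneracy \eqref{eq2.10ex} bounds the first term below by $\delta\abs{\vec u_\Omega}^2$, and H\"older together with $\theta\in\sL_{p,\infty}$ bounds the last term above by $C\norm{\theta}_{\sL_{p,\infty}}\norm{D\vec u}_{L^2(\Omega)}^2$; the cross term is treated differently in the two cases.

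If $\theta\vec\xi\cdot\vec\xi\ge0$ pointwise then $S$ is pointwise symmetric positive semidefinite, so Cauchy--Schwarz for this semidefinite form gives $\bigabs{\int_{\partial\Omega}S\vec u_\Omega\cdot\vec w\,dS}\le\big(\int_{\partial\Omega}\theta\vec u_\Omega\cdot\vec u_\Omega\,dS\big)^{1/2}\big(\int_{\partial\Omega}\theta\vec w\cdot\vec w\,dS\big)^{1/2}$, and Young's inequality then yields $\int_{\partial\Omega}\theta\vec u_\Omega\cdot\vec u_\Omega\,dS\le 2\ip{\Theta(t)\vec u,\vec u}+2\int_{\partial\Omega}\theta\vec w\cdot\vec w\,dS$. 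Combining this with $\delta\abs{\vec u_\Omega}^2\le\int_{\partial\Omega}\theta\vec u_\Omega\cdot\vec u_\Omega\,dS$, with the bound on the $\vec w$-term, and with $\norm{\vec u}_{L^2(\Omega)}^2=\abs{\Omega}\abs{\vec u_\Omega}^2+\norm{\vec w}_{L^2(\Omega)}^2$ produces an inequality $\norm{\vec u}_{L^2(\Omega)}^2\le A\ip{\Theta(t)\vec u,\vec u}+B\norm{D\vec u}_{L^2(\Omega)}^2$ with $A,B$ depending only on $n,m,\Omega,\delta$ and $\norm{\theta}_{\sL_{p,\infty}}$; adding $\eta\norm{D\vec u}_{L^2(\Omega)}^2$ to $\eta$ times this, using $\ip{\Theta(t)\vec u,\vec u}\ge0$ so that the factor $\eta A\le1$ may be dropped, and choosing $\eta$ small enough that $\eta(B+1)<\lambda$, gives \eqref{2.eq4} with $\tilde\lambda=\eta(B+1)$ and $\vartheta_0=\eta$; no smallness of $\theta$ is used. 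If instead $\theta$ is not sign-definite, bound the cross term by H\"older followed by Young with parameter $\tfrac{\delta}{2}$ (absorbing $\tfrac{\delta}{2}\abs{\vec u_\Omega}^2$); together with $\abs{\vec u_\Omega}^2\ge\abs{\Omega}^{-1}\norm{\vec u}_{L^2(\Omega)}^2-C\norm{D\vec u}_{L^2(\Omega)}^2$ this gives $\ip{\Theta(t)\vec u,\vec u}\ge\tfrac{\delta}{2\abs{\Omega}}\norm{\vec u}_{L^2(\Omega)}^2-c_*\norm{D\vec u}_{L^2(\Omega)}^2$, where $c_*$ is of order $\delta+\norm{\theta}_{\sL_{p,\infty}}+\norm{\theta}_{\sL_{p,\infty}}^2/\delta$. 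Because $\delta\le C_\Omega\norm{\theta}_{\sL_{p,\infty}}$ automatically (integrate \eqref{eq2.10ex} and use H\"older), each summand of $c_*$ is small once $\norm{\theta}_{\sL_{p,\infty}}$ is small enough relative to $\delta,\lambda,\Omega$; choosing it so that $c_*<\lambda/2$ and taking $\tilde\lambda=\lambda/2$, $\vartheta_0=\min\{\lambda/2-c_*,\ \delta/(2\abs{\Omega})\}$ gives \eqref{2.eq4}.

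The step I expect to be delicate is the appearance of the \emph{critical} Sobolev exponent when $n\ge3$: the embedding $H^{1/2}(\partial\Omega)\hookrightarrow L^{2(n-1)/(n-2)}(\partial\Omega)$ is sharp for $p=n-1$, so every H\"older pairing above sits exactly at the endpoint and there is no integrability to spare --- in particular one cannot extract compactness, which is why I rely throughout on continuous (non-compact) embeddings and the explicit decomposition rather than a Rellich--Kondrachov contradiction argument. (For the case $\theta\ge0$ a contradiction argument does work: along a would-be bad sequence normalized in $H^1(\Omega)$, nonnegativity of $\ip{\Theta\vec u,\vec u}$ forces $\norm{D\vec u_k}_{L^2(\Omega)}\to0$, which upgrades the weak $H^1$-limit to a strong one, and strong convergence in $H^1(\Omega)$ does yield strong convergence in $L^{2(n-1)/(n-2)}(\partial\Omega)$, sidestepping the endpoint; this device is unavailable when $\theta$ is merely small.) A secondary technical point is that \eqref{2.eq4} demands the coefficient of $\ip{\Theta(t)\vec u,\vec u}$ be exactly $1$, which is why in the first case the nonnegativity of $\theta$ is invoked a second time, to discard the nonpositive term $(\eta A-1)\ip{\Theta(t)\vec u,\vec u}$.
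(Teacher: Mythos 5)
Your proof is correct, but it takes a genuinely different route from the paper's. The paper decomposes around the \emph{boundary} mean, writing $\vec u=\vec v+\vec c$ with $\vec c=\fint_{\partial\Omega}\vec u$, and first establishes the a priori one-sided bound $0\le\alpha\|D\vec u\|_{L^2(\Omega)}^2+\ip{\Theta(t)\vec u,\vec u}$ for some $\alpha$ controlled by $\|\theta\|_{\sL_{p,\infty}}$ and $\delta^{-1}$, using Young's inequality and the trace Sobolev--Poincar\'e inequality for mean-zero boundary traces. It then obtains the $L^2$-coercivity by a contradiction argument: along a would-be bad sequence normalized in $L^2(\Omega)$, that a priori bound forces $\|D\vec u_k\|_{L^2(\Omega)}\to 0$, Rellich--Kondrachov together with $D\vec u_k\to 0$ upgrades to strong $H^1$-convergence, and the limit is a nonzero constant $\vec c$ with $\int_{\partial\Omega}\theta\vec c\cdot\vec c\,dS_x=0$, contradicting \eqref{eq2.10ex}. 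You instead decompose around the \emph{interior} mean $\vec u_\Omega=\fint_\Omega\vec u$ and argue entirely constructively, controlling the cross term $2\int_{\partial\Omega}S\vec u_\Omega\cdot\vec w\,dS_x$ either by the Cauchy--Schwarz inequality for the pointwise semidefinite form $S$ (when $\theta\ge 0$) or by H\"older and Young against the nondegeneracy gap $\delta|\vec u_\Omega|^2$ (when $\|\theta\|_{\sL_{p,\infty}}$ is small). Your route gives explicit constants $\tilde\lambda,\vartheta_0$ and avoids compactness altogether; the paper's route is shorter but hides the quantitative dependence inside the compactness argument. Both correctly invoke the critical trace embedding $H^1(\Omega)\hookrightarrow L^{2(n-1)/(n-2)}(\partial\Omega)$ for $n\ge 3$ at the Lebesgue exponent $p=n-1$.

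One remark on your parenthetical aside: you write that the Rellich device is ``unavailable when $\theta$ is merely small.'' In fact the paper does run the compactness argument in both cases. The point you are missing is that the paper first proves the a priori estimate $\ip{\Theta(t)\vec u,\vec u}\ge-\Lambda\|D\vec u\|_{L^2(\Omega)}^2$ (valid with no sign condition on $\theta$), and it is this estimate---not nonnegativity---that yields $\|D\vec u_k\|_{L^2(\Omega)}\to 0$ along the bad sequence and hence strong $H^1$-convergence. So the device is available; it merely requires that preliminary lower bound. This is only a commentary on the expected proof and does not affect the validity of your own argument.
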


\section{Main theorems}		\label{B3}

\begin{theorem}		\label{B3.thm1}
Assume the conditions (H1) and (H2).
Let $Q=\Omega \times (-\infty,\infty)$.
Then there exists a unique Green's function $\vec \cG(X,Y)=\vec \cG(x,t,y,s)$ for Robin problem \eqref{RP}.
It is continuous in $\set{(X,Y)\in Q\times Q:X\neq Y}$ and vanishes for $t<s$.
We have $\vec \cG^{*}(X,Y)= \vec \cG(Y,X)^\top$ is a Green's function for the adjoint problem \eqref{RPs}.
For any $\vec f =(f^1,\ldots, f^m)^\top \in \sC^\infty_c(\bar Q)^m$, the function $\vec u$ given by
\begin{equation}			\label{eq3.04c}
\vec u(X):= \int_{Q} \vec \cG(X,Y) \vec f(Y)\,dY
\end{equation}
is a weak solution in $\sV^{1,0}_2(Q)^m$ of
\begin{equation}			\label{eq3.05a}
\sL \vec u = \vec f\;\text{ in }\;Q,\quad \partial \vec u/\partial \nu+\Theta \vec u =0\;\text{ on }\;\partial_p Q.
\end{equation}
Moreover, for all $\vec \psi =(\psi^1,\ldots,\psi^m)^\top \in L^2(\Omega)^m$, the function given by
\begin{equation}		\label{EP.1b}
\vec u(x,t)=\int_\Omega \vec \cG (x,t,y,s) \vec \psi(y)\,dy
\end{equation}
is a unique weak solution in $\sV^{1,0}_2(\Omega\times(s,\infty))^m$ of the problem
\begin{equation}		\label{EP.1b1}			
\left\{
\begin{aligned}
\sL\vec u= 0\quad &\text{in }\; \Omega\times(s,\infty)\\
\partial \vec u/\partial \nu+\Theta \vec u= 0 \quad&\text{on }\; \partial\Omega\times(s,\infty)\\
\vec u(\cdot, s)=\vec \psi \quad&\text{on }\; \Omega.
\end{aligned}
\right.
\end{equation}
Furthermore, for $X, Y \in Q$ satisfying $0<\abs{X-Y}_{\sP} < \frac{1}{2} \dist(Y, \partial_p Q)$, we have
\begin{equation}		\label{eq3.07c}
\abs{\vec \cG(X,Y)} \le C \abs{X-Y}_{\sP}^{-n},
\end{equation}
and for $X, X', Y\in Q$ satisfying $2\abs{X-X'}_{\sP}<\abs{X-Y}_{\sP}<\frac{1}{2} \dist(Y, \partial_p Q)$, 
\begin{equation}		\label{eq3.08d}
\abs{\vec \cG(X,Y)-\vec \cG(X',Y)}\le C\abs{X-X'}_{\sP}^{\mu_0} \abs{X-Y}_{\sP}^{-n-\mu_0},
\end{equation}
where the constant $C$ depend only on $n, m, \lambda$ and parameters in (H1), (H2).
Finally, $\vec \cG^{*}(X,Y)= \vec \cG(Y,X)^\top$ is Green's function for the adjoint problem \eqref{RPs}.
\end{theorem}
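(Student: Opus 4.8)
The plan is to follow the scheme of \cite{HK04,CDK,CK2}: construct averaged Robin Green's functions by solving approximate problems, obtain estimates uniform in the averaging parameter, and pass to the limit. Fix $Y=(y,s)\in Q$. For $\rho>0$ let $\vec\cG_\rho(\cdot,Y)$ be the $m\times m$ matrix whose $k$-th column is the weak solution, furnished by Lemma~\ref{B3.lem1} (applied on finite slabs $\Omega\times(s,T)$ and then letting $T\to\infty$), of $\sL\vec u=\abs{Q^+_\rho(Y)}^{-1}\mathbf 1_{Q^+_\rho(Y)}\vec e_k$ in $Q$ with the homogeneous Robin condition on $\partial\Omega\times\bR$ and zero initial data at $t=s$; by uniqueness, $\vec\cG_\rho(\cdot,Y)\equiv 0$ for $t<s$. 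Running the same construction for $\sL^{*}$ with cylinders $Q^-_\sigma(X)$ (the second half of (H2) supplies interior H\"older continuity for $\sL^{*}$) produces $\vec\cG^{*}_\sigma(\cdot,X)$, which vanishes for $t>t_0$ when $X=(x_0,t_0)$. One should note at the outset that $\tri{\vec\cG_\rho(\cdot,Y)}_{\Theta;Q}\le C\rho^{-n/2}$ is \emph{not} uniform in $\rho$, so every quantitative estimate below must be localized away from the pole.

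The heart of the matter is a bound for $\vec\cG_\rho(\cdot,Y)$ away from $Y$, uniform in $\rho$. Put $d=\dist(Y,\partial_p Q)$, let $X$ satisfy $2\abs{X-Y}_\sP=:2r<d$, and take $\rho<r/16$. Since $\vec\cG_\rho(\cdot,Y)$ solves $\sL\vec u=0$ in the \emph{interior} cylinder $Q_{r/2}(X)\Subset Q$, (H2) gives, after the routine passage from a H\"older seminorm to a sup bound, an interior sup estimate and an interior H\"older estimate on $Q_{r/4}(X)$ in terms of $\big(\fint_{Q_{r/2}(X)}\abs{\vec\cG_\rho(\cdot,Y)}^2\big)^{1/2}$. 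I would control this average by duality: for a unit vector $\vec e$, let $\vec v$ solve $\sL^{*}\vec v=\mathbf 1_{Q_{r/2}(X)}\vec e$ in $Q$ with the homogeneous Robin condition for $\sL^{*}$ and zero data at $t=+\infty$ (again via finite slabs and Lemma~\ref{B3.lem1}); testing the equation for $\vec\cG_{\rho,\cdot k}(\cdot,Y)$ against $\vec v$ — the $t$-integration by parts has no endpoint contributions since $\vec\cG_\rho(\cdot,Y)$ vanishes near $t=-\infty$ — yields $\int_{Q_{r/2}(X)}\vec\cG_{\rho,\cdot k}(\cdot,Y)\cdot\vec e=\fint_{Q^+_\rho(Y)}v^k$. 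Since $\mathbf 1_{Q_{r/2}(X)}$ is supported at parabolic distance $\gtrsim r$ from $Y$, $\vec v$ solves $\sL^{*}\vec v=0$ in a cylinder of size $\sim r$ based at $Y$; combining the energy inequality \eqref{enieq} (with $\norm{\mathbf 1_{Q_{r/2}(X)}\vec e}_{\sL_{2(n+2)/(n+4)}(Q)}\sim r^{(n+4)/2}$), the embedding \eqref{eq2.14ar}, and (H2) bounds $\fint_{Q^+_\rho(Y)}\abs{\vec v}$ by $Cr^{2}$, whence $\fint_{Q_{r/2}(X)}\abs{\vec\cG_\rho(\cdot,Y)}\le Cr^{-n}$. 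A standard interpolation--absorption argument upgrades this $L^1$-average bound to $\sup_{Q_{r/8}(X)}\abs{\vec\cG_\rho(\cdot,Y)}\le Cr^{-n}$, after which (H2) gives $[\vec\cG_\rho(\cdot,Y)]_{\mu_0,\mu_0/2;Q_{r/16}(X)}\le Cr^{-n-\mu_0}$, with all constants depending only on $n,m,\lambda$ and the parameters in (H1), (H2).

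With these uniform bounds, Arzel\`a--Ascoli produces a subsequence $\vec\cG_{\rho_j}(\cdot,Y)\to\vec\cG(\cdot,Y)$ locally uniformly on $Q\setminus\{Y\}$, and (by a diagonal argument) uniformly for $Y$ in compacta; the limit satisfies \eqref{eq3.07c} and \eqref{eq3.08d} and vanishes for $t<s$. Property b) follows by passing to the limit in the weak formulation for $\vec\cG_{\rho_j,\cdot k}(\cdot,Y)$, whose right-hand side $\fint_{Q^+_{\rho_j}(Y)}\phi^k$ tends to $\phi^k(Y)$; this passage and property a) rest on the bound $\tri{\vec\cG_{\rho_j}(\cdot,Y)}_{\Theta;Q\setminus Q_{2\tau}(Y)}\le C(\tau)$, uniform in $\rho_j<\tau$, obtained by testing the homogeneous equation against $\zeta^2\vec\cG_{\rho_j}(\cdot,Y)$ with a cutoff $\zeta$ that vanishes on $Q_\tau(Y)$ \emph{and equals $1$ on $\partial\Omega\times\bR$} — legitimate precisely because $Y$ has positive distance from $\partial\Omega$ — so that $\zeta^2\vec\cG_{\rho_j}$ and $\vec\cG_{\rho_j}$ share boundary traces, the nonlocal term becomes $\ip{\Theta(\zeta\vec\cG_{\rho_j}),\zeta\vec\cG_{\rho_j}}$, the coercivity \eqref{2.eq4} (Remark~\ref{rmk2.1pe}) absorbs it, and \eqref{eq3.07c} controls the right-hand side. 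For the representation formulas, set $\vec u_\rho(X):=\int_Q\vec\cG_\rho(X,Y)\vec f(Y)\,dY$; then $\sL\vec u_\rho=\vec f_\rho$ with the Robin condition, where $\vec f_\rho\to\vec f$ in $\sL_{2(n+2)/(n+4)}(Q)$, and \eqref{enieq} gives $\tri{\vec u_\rho}_{\Theta;Q}\le C\norm{\vec f}_{\sL_{2(n+2)/(n+4)}(Q)}$ uniformly, so $\vec u_\rho\to\vec u$ (weakly in $\sV^{1,0}_2$; pointwise by dominated convergence via \eqref{eq3.07c}) and the function $\vec u$ in \eqref{eq3.04c} solves \eqref{eq3.05a}; taking $\vec f(y,t)=\varepsilon^{-1}\mathbf 1_{(s-\varepsilon,s)}(t)\vec\psi(y)$ and letting $\varepsilon\to 0$ yields \eqref{EP.1b}. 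Finally, testing the equation for $\vec\cG_{\rho,\cdot k}(\cdot,Y)$ against $\vec\cG^{*}_{\sigma,\cdot l}(\cdot,X)$ and vice versa and subtracting — the $x$-bilinear form and the $\Theta$-pairing cancel, and the time terms combine into a total $t$-derivative that integrates to zero because $\vec\cG_\rho(\cdot,Y)$ vanishes near $t=-\infty$ and $\vec\cG^{*}_\sigma(\cdot,X)$ near $t=+\infty$ — gives the reciprocity $\fint_{Q^+_\rho(Y)}\vec\cG^{*}_\sigma(\cdot,X)=\big(\fint_{Q^-_\sigma(X)}\vec\cG_\rho(\cdot,Y)\big)^{\!\top}$; letting $\rho,\sigma\to0$ gives $\vec\cG^{*}(X,Y)=\vec\cG(Y,X)^{\top}$, hence property c) (it is \eqref{eq3.04c} for $\sL^{*}$), joint continuity of $\vec\cG$ off the diagonal (separate continuity in $X$, and in $Y$ by reciprocity, both uniform), and — with uniqueness of weak solutions of \eqref{RPs} — uniqueness of $\vec\cG$.

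The hard part is the uniform-in-$\rho$ pointwise bound \eqref{eq3.07c} of the second paragraph. Since (H3) is not assumed here, there is no boundedness of solutions up to the lateral boundary to fall back on, and the argument works only because \eqref{eq3.07c}--\eqref{eq3.08d} are confined to the interior region $\abs{X-Y}_\sP<\tfrac{1}{2}\dist(Y,\partial_p Q)$, which keeps every relevant cylinder away from $\partial\Omega$ so that (H2) — an interior statement — suffices; extracting the correct $L^1$-average decay then leans on the duality argument with the adjoint problem, whose solvability and integrability come from Lemma~\ref{B3.lem1} and \eqref{eq2.14ar}. Two further technical nuisances are the nonuniformity of $\tri{\vec\cG_\rho(\cdot,Y)}_{\Theta;Q}$ in $\rho$, which forces all energy estimates to be localized and the nonlocal Robin term to be absorbed via the special cutoff above, and the infinite time strip $Q=\Omega\times(-\infty,\infty)$, which requires realizing the backward adjoint problem with data ``at $t=+\infty$'' through finite-slab approximation together with the decay supplied by \eqref{2.eq4}.
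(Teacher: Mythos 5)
Your proposal is correct and follows essentially the same route as the paper's proof: construct averaged Robin Green's functions via Lemma~\ref{B3.lem1}, obtain the interior pointwise bound \eqref{eq3.07c} uniformly in the averaging parameter by a duality argument against the adjoint problem combined with~(H2), derive localized $\tri{\cdot}_\Theta$-bounds away from the pole by testing with $(1-\eta)^2\vec\cG_\rho$ (with $1-\eta\equiv 1$ near the lateral boundary so that the nonlocal pairing can be absorbed by coercivity), pass to the limit, and establish the reciprocity identity $\vec\cG^{*}(X,Y)=\vec\cG(Y,X)^\top$ by playing the two averaged Green's functions against each other. The only cosmetic deviations — averaging over $Q^+_\rho(Y)$ rather than $Q^-_\epsilon(Y)$, phrasing the duality step with an indicator right-hand side instead of an arbitrary $\vec f$ with small sup-norm (one should of course run the duality over all such $\vec f$ to pass from $\bigl|\int\vec\cG_\rho\bigr|$ to $\int\abs{\vec\cG_\rho}$), using Arzel\`a--Ascoli instead of weak $\sW^{1,0}_q$ compactness, and deriving \eqref{EP.1b} by a regularization of $\vec f$ in time rather than by the averaged adjoint Green's function — do not change the substance of the argument.
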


\begin{remark}		\label{rmk3.8}
It will be clear from the proof that  for any $Y\in Q$ and $0<r<d_Y=\dist(Y,\partial_p Q)$, we have
\begin{enumerate}[i)]
\item
$\tri{(1-\eta)\vec \cG_{\cdot k}(\cdot,Y)}_{Q}\le C r^{-n/2}$.
\item
$\norm{\vec \cG(\cdot,Y)}_{\sL_{2(n+2)/n}( Q\setminus \bar Q_r(Y))}\le C r^{-n/2}$.
\item
$\Abs{\set{X\in Q:\abs{\vec \cG(X,Y)}>\tau}}\le C\tau^{-\frac{n+2}{n}}, \quad \forall \tau>d_Y^{-n}$.
\item
$\Abs{\set{X\in Q:\abs{D_x\vec \cG(X,Y)}>\tau}}\le C\tau^{-\frac{n+2}{n+1}}, \quad \forall \tau>d_Y^{-(n+1)}$.
\item
$\norm{\vec \cG(\cdot,Y)}_{\sL_p(Q_r(Y))}\le C r^{-n+\frac{n+2}{p}} \quad \text{for }\; p\in [1,\frac{n+2}{n})$.
\item
$\norm{D\vec \cG(\cdot,Y)}_{\sL_p(Q_r(Y))}\le C r^{-n-1+\frac{n+2}{p}} \quad \text{for }\; p\in [1,\frac{n+2}{n+1})$.
\end{enumerate}
\end{remark}

\begin{theorem}		\label{B3.thm2}
Assume the conditions (H1) - (H3).
Then, for $t>s$, we have Gaussian bound for the Robin Green's function
\begin{equation}		\label{3.eq2}
\abs{\vec \cG(x,t,y,s)}\le \frac{C}{\min\left\{\sqrt{t-s}, \diam \Omega\right\}^{n}}\exp\left(\frac{-\kappa\abs{x-y}^2}{t-s}\right),
\end{equation}
where $\kappa=\kappa(\lambda)>0$ and $C=C(n,m,\lambda, A_1)$.
\end{theorem}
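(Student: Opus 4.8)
The plan is to follow Davies' exponential‑perturbation scheme, adapted to the Robin setting and to a bounded domain, with the local boundedness hypothesis (H3) playing the role of the ultracontractivity input. Throughout write $P_{t,s}$ for the solution operator $\vec\psi\mapsto\vec u(\cdot,t)$ of \eqref{EP.1b1}, so that by Theorem~\ref{B3.thm1} the matrix $\vec\cG(\cdot,t,\cdot,s)$ is the integral kernel of $P_{t,s}$, and assume $R_1=\diam\Omega$, which is permissible by Remark~\ref{rmk2.5pe}. I would first establish the on‑diagonal bound. Testing the weak formulation of the homogeneous Robin problem with the solution itself (justified by density) and using strong ellipticity together with \eqref{2.eq4} (recall $\tilde\lambda<\lambda$) shows that $\tau\mapsto\norm{\vec u(\cdot,\tau)}_{L^2(\Omega)}$ is non‑increasing, so $\norm{P_{t,s}}_{L^2\to L^2}\le1$. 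Inserting this into (H3), applied to $\vec u$ as a solution of the homogeneous problem on $\Omega\times(s,t)$, gives with $R=\min\{\sqrt{t-s},\diam\Omega\}$
\[
\abs{\vec u(x,t)}\le A_1R^{-(n+2)/2}\norm{\vec u}_{\sL_2(\Omega\times(t-R^2,t))}\le A_1R^{-(n+2)/2}\cdot R\,\norm{\vec\psi}_{L^2(\Omega)},
\]
i.e. $\norm{P_{t,s}}_{L^2\to L^\infty}\le A_1R^{-n/2}$; the adjoint problem satisfies (H1) and the adjoint half of (H3), so dualizing gives $\norm{P_{t,s}}_{L^1\to L^2}\le A_1R^{-n/2}$, and composing at the midpoint yields $\abs{\vec\cG(x,t,y,s)}\le C\min\{\sqrt{t-s},\diam\Omega\}^{-n}$ with $C=C(n,m,\lambda,A_1)$.

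Next comes a reduction: if $\sqrt{t-s}\ge\diam\Omega$ or $\abs{x-y}^2\le t-s$, then the exponential in \eqref{3.eq2} is bounded below by a constant depending only on $\lambda$ and \eqref{3.eq2} already follows from the on‑diagonal bound; so it remains to treat $t-s\le\diam\Omega^2$ and $\abs{x-y}^2>t-s$. For this range I would prove a Davies--Gaffney $L^2$ estimate, which is where \eqref{eq2.09local} is used in an essential way. For relatively open $E,F\subseteq\Omega$ put $\rho(z):=\min\{\dist(z,F),\dist(E,F)\}$, a $1$‑Lipschitz function vanishing on $F$ and equal to $\dist(E,F)$ on $E$, and for $\gamma>0$ set $\phi:=\gamma\rho$. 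Testing the weak formulation of the homogeneous problem (with $\supp\vec\psi\subseteq F$) against $e^{2\phi}\vec u$, one observes $\vec u\cdot(e^{2\phi}\vec u)=e^{2\phi}\abs{\vec u}^2\ge0$ a.e., so \eqref{eq2.09local} forces $\ip{\Theta(\tau)\vec u,e^{2\phi}\vec u}\ge0$ and the boundary term drops; strong ellipticity and Young's inequality on the gradient terms give $\frac{d}{d\tau}\int_\Omega e^{2\phi}\abs{\vec u}^2\le C(\lambda)\gamma^2\int_\Omega e^{2\phi}\abs{\vec u}^2$, and after Gr\"onwall and optimization in $\gamma$ one obtains
\[
\norm{\vec u(\cdot,t)}_{L^2(E)}\le\exp\Bigset{-\frac{\dist(E,F)^2}{C_0(t-s)}}\,\norm{\vec\psi}_{L^2(F)},\qquad C_0=C_0(\lambda),
\]
together with its analogue for the adjoint evolution.

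To pass from this $L^2$ off‑diagonal decay to the pointwise Gaussian bound, with $r=\abs{x-y}$ (and $t-s\le\diam\Omega^2<\infty$, $r^2>t-s$) I would split $P_{t,s}=P_{t,t_2}P_{t_2,t_1}P_{t_1,s}$ at $t_j=s+j(t-s)/3$ and insert spatial cut‑offs separating a ball about $x$ from a ball about $y$: on the pieces localized near $x$ and near $y$ one converts $L^2$ into $L^\infty$ using the boundary local boundedness of (H3) (and, where the relevant ball lies at interior distance $\gtrsim r$ from $\partial\Omega$, the interior estimate (H2) via Theorem~\ref{B3.thm1}), while the Davies--Gaffney estimate on the middle piece supplies the factor $e^{-cr^2/(t-s)}$; combining with the on‑diagonal bound on the short legs (legitimate since $t-s\le\diam\Omega^2$) and optimizing gives $\abs{\vec\cG(x,t,y,s)}\le C(t-s)^{-n/2}e^{-\kappa r^2/(t-s)}$ with $C=C(n,m,\lambda,A_1)$ and $\kappa=\kappa(\lambda)$, which is \eqref{3.eq2} in the remaining range; the bookkeeping here is as in \cite{CDK2,CK2}.

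The hard part is this last step. One cannot simply run Davies' pointwise argument with a conjugated operator $M_{e^\phi}\circ\sL\circ M_{e^{-\phi}}$ and a linear or truncated weight $\phi$, because passing from $L^2$ to $L^\infty$ for the conjugated evolution costs a factor $\exp\{c\,\gamma\,\osc_\Omega\phi\}$ that is, after optimizing in $\gamma$, of the same order as the Gaussian gain and cancels it; this forces us to keep the exponential weight strictly at the $L^2$ level in Step 3 and to re‑localize by hand in Step 4, so that the final constants depend only on $n,m,\lambda,A_1$ and the sole $\Omega$‑dependence is the one already visible in \eqref{3.eq2}. A related point, and the reason the sign condition in (H3) is stated bilinearly as \eqref{eq2.09local} rather than as $\Theta(t)\ge0$, is that for systems no maximum principle is available, so every localization — including the one feeding (H3) — must be carried out at the $L^2$ level.
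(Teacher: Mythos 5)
Your route is genuinely different from the paper's. The paper runs Davies' scheme directly with the conjugated evolution $P^\psi_{s\to t}\vec f := e^{\psi}\,\vec u(\cdot,t)$, where $\vec u$ is the Robin solution with data $e^{-\psi}\vec f$: it proves $\norm{P^\psi_{s\to t}}_{L^2\to L^2}\le e^{\nu M^2(t-s)}$ (the boundary term drops exactly by \eqref{eq2.09local}), then applies (H3) to the \emph{unconjugated} solution $\vec u=e^{-\psi}P^\psi_{s\to\cdot}\vec f$ and reconjugates to obtain an $L^2\to L^\infty$ bound for $P^\psi_{s\to t}$, and finally cites \cite[Theorem~3.21]{CK2} for the midpoint composition and the optimization over $\psi$. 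You instead keep the exponential weight strictly at the $L^2$ level (a Davies--Gaffney estimate) and attempt to relocalize pointwise afterwards. Your Steps 1--3 are correct, and the Gr\"onwall/weight computation in your Step 3 is essentially the same display that appears in the paper.

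The gap is in Step 4. As assumed, (H3) is a \emph{global} $L^2\to L^\infty$ statement: it bounds $\abs{\vec u(x,b)}$ by $\norm{\vec u}_{\sL_2(\Omega\times(b-R^2,b))}$, with the $\sL_2$-norm taken over all of $\Omega$, not over a parabolic cylinder about $(x,b)$. So after you split $P_{t,s}=P_{t,t_2}P_{t_2,t_1}P_{t_1,s}$ and insert spatial cut-offs $\chi_E,\chi_F$ near $x$ and $y$, applying (H3) to the piece $P_{t,t_2}\chi_E P_{t_2,t_1}\chi_F(\cdots)$ does \emph{not} localize: it simply returns the full $L^2(\Omega)$ norm of $\chi_E P_{t_2,t_1}\chi_F(\cdots)$, forgetting that the input is small precisely because it is supported on $E$; and the complementary term $\bigl(P_{t,t_2}\chi_{E^c}(\cdots)\bigr)(x)$, which you must also control, receives no decay from (H3) even though its input is supported far from $x$. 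Making Step 4 rigorous requires either an iterated bootstrap of the $L^2$ off-diagonal estimates (Hofmann--Kim style), or a Cauchy--Schwarz decomposition of $\vec\cG(x,t,y,s)=\int_\Omega\vec\cG(x,t,z,\sigma)\vec\cG(z,\sigma,y,s)\,dz$ that also invokes the \emph{adjoint} Davies--Gaffney estimate; neither is ``bookkeeping as in \cite{CDK2,CK2}'', since those papers (like the present one) use the conjugated semigroup, not a relocalization. Your parenthetical critique of the conjugated approach --- that reconjugating (H3) costs a factor $e^{\osc_\Omega\psi}$ --- does flag a real subtlety (the paper's intermediate bound $e^{2(\psi(x)-\psi(y))}\le e^{2MR}$ with $R=\min\{\sqrt{t-s},\diam\Omega\}$ is not literally valid for all $y\in\Omega$ when $R<\diam\Omega$, and the details are delegated to \cite{CK2}); but your proposal does not escape the same difficulty --- it is merely displaced into Step 4, where it resurfaces as the non-locality of (H3).
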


\begin{theorem}	\label{thm3.3flu}
Assume the conditions (H1) and let  $\Theta=M_\theta$, the operator of multiplication with $\theta$, where $\theta$ be an $m\times m$ matrix-valued function defined on $\partial \Omega\times (-\infty,\infty)$.
Assume that one of the following conditions holds:
\begin{enumerate}[(i)]
\item
$m=1$ (i.e. the scalar case), $\Omega$ is a Lipschitz domain, and $\theta \ge 0$.

\item
$\Omega$ is a $C^1$ domain, $A^{\alpha\beta}$ are in $VMO_x$ (see Remark~\ref{rmk2.4pe}), and $\theta$ is bounded.

\item
$n=2$, $\Omega$ is a Lipschitz domain, $A^{\alpha\beta}(X)=A^{\alpha\beta}(x)$, $\theta(X)=\theta(x)$ (i.e. $t$-independent), and  $\theta$ is bounded.
\end{enumerate}
Then, there exist $C>0$ and $r_0>0$ such that if $\vec u$ is a weak solution of 
\begin{equation}		\label{eqj.1b}
\left\{
\begin{aligned}
\vec u_t-D_\alpha(A^{\alpha\beta}D_\beta \vec u)=0 &\quad \text{in }\, Q:=\Omega\times (a,b),\\
\partial \vec u/\partial \nu+\theta\vec u=0 &\quad \text{on }\, S:=\partial \Omega\times (a,b),
\end{aligned}
\right.
\end{equation}
we have for any $x_0\in \overline \Omega$ and $0<R<\min(\sqrt{b-a},r_0)$ that
\begin{equation}		\label{30eq2a}
\norm{\vec u}_{\sL_\infty(Q^{-}_{R/2}(X_0)\cap Q)} \le C R^{-(n+2)/2}\norm{\vec u}_{\sL_2(\Omega \times (b-R^2,b))};\quad X_0:=(x_0, b).
\end{equation}
Analogous statement is true for the corresponding adjoint case.
\end{theorem}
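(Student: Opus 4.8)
The overall plan is to prove the three cases by separate arguments, the common thread being that the Robin condition $\partial\vec u/\partial\nu=-\theta\vec u$ on $S$ enters either as a boundary integral of favorable sign in an energy inequality or as a \emph{lower-order} perturbation of the conormal (Neumann) problem, since \eqref{enieq} together with the trace estimate \eqref{eq2.16wine} already places $\vec u|_S$ in a Lebesgue class on $S$. After a translation in $t$ one may take $b=0$; it is then enough in each case to bound $\norm{\vec u}_{\sL_\infty}$ on a cylinder slightly smaller than $Q^-_R(X_0)\cap Q$ by $\tri{\vec u}_{\Omega\times(-R^2,0)}$, which by \eqref{enieq}--\eqref{eq2.19ar} is in turn controlled by $\norm{\vec u}_{\sL_2(\Omega\times(-R^2,0))}$. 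I will freely use that $\Omega$ is an $H^1$-extension domain, so that \eqref{2.eq4d}--\eqref{eq2.16wine} hold on $\Omega$-cylinders.

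\emph{Case (i).} Here I would run the De Giorgi--Nash--Moser iteration near a boundary point. Fixing $x_0\in\overline\Omega$ and a cutoff $\zeta$ supported in $B_R(x_0)$ with $\abs{D\zeta}\le C/R$ and testing \eqref{eqj.1b} with $\zeta^2(u-k)_+$ for $k\ge0$, the Robin term equals $-\int_{\partial\Omega}\theta\,u\,\zeta^2(u-k)_+\,dS_x$, which is $\le0$ because $\theta\ge0$ and $u(u-k)_+\ge0$ wherever $(u-k)_+>0$; hence it is simply discarded, and the usual Caccioppoli inequality for $(u-k)_+$ on $Q^-_R(X_0)\cap Q$ survives unchanged. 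The standard iteration over shrinking levels and radii, using \eqref{2.eq4d} on $\Omega$-cylinders, then gives $\sup_{Q^-_{R/2}(X_0)\cap Q} u^+\le CR^{-(n+2)/2}\norm{u^+}_{\sL_2(Q^-_R(X_0)\cap Q)}$; applying the same reasoning to $-u$ (which solves the same type of problem with the same nonnegative $\theta$) yields \eqref{30eq2a}, with $r_0$ depending only on the Lipschitz character of $\Omega$.

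\emph{Case (ii).} Here I would bootstrap integrability. By \eqref{enieq} and \eqref{eq2.16wine}, $\vec u|_S\in\sL_{2(n+1)/n}(S)^m$, hence $\vec g:=-\theta\vec u|_S$ lies in the same class since $\theta\in L^\infty$, and \eqref{eqj.1b} becomes a conormal problem with boundary datum $\vec g$ and zero interior right-hand side. I would then invoke the boundary $\sW^{1,0}_p$-estimates for parabolic systems with $VMO_x$ coefficients on $C^1$ domains developed in \cite{CDK,CDK2}: lifting $\vec g\in\sL_q(S)$ to a vector field on $Q$ of class $\sL_{q'}(Q)$ with the appropriate $q'$, one obtains $\vec u\in\sW^{1,0}_{q',loc}$ up to the boundary together with the corresponding quantitative estimate; since the integrability thereby gained for $\vec u$ is passed, via Sobolev and trace, back to $\vec u|_S$ and hence to $\vec g=-\theta\vec u|_S$, this may be iterated finitely many times until $q'>n+2$, and the parabolic Sobolev embedding then gives the $\sL_\infty$ bound, with $r_0$ fixed by the $VMO_x$ modulus of $A^{\alpha\beta}$ and the $C^1$ character of $\Omega$. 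I expect this case to be the main obstacle: one must set up boundary $\sW^{1,0}_p$-regularity for the parabolic \emph{Robin} problem in a form suitable for iteration and track the exponents carefully through each step.

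\emph{Case (iii).} For $n=2$ with $t$-independent $A^{\alpha\beta}$ and $\theta$ I would adapt \cite[Theorem~3.3]{Kim} up to the boundary. Because $A^{\alpha\beta}$ and $\theta$ do not depend on $t$, differentiating \eqref{eqj.1b} in $t$ shows that $\vec u_t$ is itself a weak solution of the homogeneous Robin problem, so parabolic energy/Caccioppoli estimates (the Robin term handled as in Case (i)) give $\vec u_t\in\sV_{2,loc}$ up to the boundary, with $\esssup_t\int_\Omega\abs{\vec u_t(\cdot,t)}^2$ locally finite and controlled, through iterated energy inequalities as in \cite{Kim}, by $\norm{\vec u}_{\sL_2(\Omega\times(-R^2,0))}$. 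Hence for a.e.\ $t$ the slice $\vec u(\cdot,t)$ solves the elliptic Robin system in $\Omega$ with right-hand side $-\vec u_t(\cdot,t)\in L^2(\Omega)$; in dimension two, Meyers-type higher integrability on the Lipschitz domain $\Omega$ gives $D\vec u(\cdot,t)\in L^{2+\varepsilon}(\Omega)$ with norm controlled by $\norm{\vec u(\cdot,t)}_{L^2(\Omega)}+\norm{\vec u_t(\cdot,t)}_{L^2(\Omega)}$, and since $2+\varepsilon>n=2$, Morrey's embedding yields $\vec u(\cdot,t)\in C^{0,\alpha}(\overline\Omega)$ with the same control; taking $\esssup$ over $t$ gives \eqref{30eq2a}. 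The adjoint statements follow verbatim, since $\sL^*$ has the same structure and $\Theta^*=M_{\theta^\top}$ inherits all the hypotheses; the chief delicate point here, less serious than in Case (ii), is the $\esssup_t$-in-$L^2$ bookkeeping for $\vec u_t$ and the transfer of Kim's interior estimates to the boundary.
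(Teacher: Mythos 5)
Your Cases (i) and (iii) are essentially in line with the paper. Case (i) is verbatim De Giorgi iteration with the Robin term discarded by sign, exactly as the paper does. Case (iii) follows the paper's blueprint (differentiate in $t$, control $\vec u_t$ slicewise as in Lemma~\ref{lem5.6er}, then an elliptic reverse-H\"older/Meyers estimate — the paper's Lemma~\ref{er.lem2}); your final step via direct Morrey embedding $W^{1,2+\varepsilon}\hookrightarrow C^{0,\alpha}$ in $n=2$ is a slight shortcut compared to the paper's Campanato argument, but it is correct for bounded Lipschitz extension domains.

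Case (ii) is where your proposal has a genuine gap. You write: lift $\vec g=-\theta\vec u|_S\in\sL_q(S)$ to $\sL_{q'}(Q)$ data, get $\vec u\in\sW^{1,0}_{q',loc}$ by VMO $L_p$ theory, feed the improved trace back into $\vec g$, and ``iterate until $q'>n+2$, then parabolic Sobolev embedding gives the $\sL_\infty$ bound.'' This does not close as stated. The paper's iteration (inequality~\eqref{13-eq1a} with exponents $p_i=np/(n+pi)$) improves the \emph{spatial} integrability of $D\vec u$ with the \emph{temporal} exponent $q$ held fixed; at no stage of that loop does the time integrability increase. To get a starting $q_0>2$ in time the paper first needs the parabolic reverse-H\"older inequality up to the boundary (Lemma~\ref{lem2.3pe}, proved via Arkhipova's version of Gehring's lemma), and even then the final passage to $\sL_\infty$ is not by anisotropic Sobolev embedding of $\sW^{1,0}_{q'}$ but by a Campanato estimate (using Lemma~\ref{13-lem1}) yielding H\"older continuity, combined with Young's inequality and a Giaquinta-type iteration. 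Without the Gehring step and the Campanato argument your bootstrap loses control of the time variable and cannot reach $\sL_\infty$. A smaller but real issue: the $\sW^{1,0}_p$ estimates you cite from \cite{CDK,CDK2} are Green's matrix papers; the machinery the paper actually uses is the Neumann-potential decomposition of Fabes--Mendez--Mitrea \cite{FMM} together with the parabolic $L_p$ theory of Dong--Kim \cite{DKd11}, and your ``lifting'' is exactly that potential construction, so you should cite and use it explicitly rather than gesture at it.
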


The following corollaries are then immediate consequences of the above theorems, Lemma~\ref{lem2.2pe}, and Remarks~\ref{rmk2.4pe} and \ref{rmk2.5pe}.

\begin{corollary}
Let $m=1$ and $\Omega$ be a Lipschitz domain.
Suppose $\Theta=M_\theta$, where  $\theta \ge 0$ and satisfies \eqref{eq2.09ex} and \eqref{eq2.10ex}.
Then there exists a (scalar) Green's function for \eqref{RP} and it satisfies the conclusions of Theorems \ref{B3.thm1} and \ref{B3.thm2}.
\end{corollary}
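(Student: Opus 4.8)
The plan is to verify that the three structural hypotheses (H1), (H2) and (H3) all hold under the present assumptions; once this is done, Theorems~\ref{B3.thm1} and \ref{B3.thm2} apply verbatim and give every assertion of the corollary.

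First I would check (H1) by appealing to Lemma~\ref{lem2.2pe}. Since $m=1$ we have $\theta(x,t)\,\xi\cdot\xi=\theta(x,t)\,\xi^{2}\ge0$ for every $\xi\in\bR$, so the sign alternative $\theta\vec\xi\cdot\vec\xi\ge0$ in that lemma is satisfied; together with the hypotheses that $\theta\in\sL_{p,\infty}(\partial Q)$ as in \eqref{eq2.09ex} and that $\theta$ is uniformly non-degenerate as in \eqref{eq2.10ex}, Lemma~\ref{lem2.2pe} yields (H1), in particular the coercivity estimate \eqref{2.eq4}. Next, (H2) is immediate from part (i) of Remark~\ref{rmk2.4pe}: in the scalar case the interior parabolic De Giorgi--Nash--Moser estimate furnishes the required local H\"older bound for weak solutions of $\sL u=0$ and of $\sL^{*}u=0$, with $\mu_0$ and $A_0$ depending only on $n$ and $\lambda$. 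With (H1) and (H2) in hand, Theorem~\ref{B3.thm1} produces the unique (scalar) Green's function $\cG$ for \eqref{RP} and all the properties listed there---continuity off the diagonal, vanishing for $t<s$, the representation formulas, and the bounds \eqref{eq3.07c} and \eqref{eq3.08d}.

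It then remains to verify (H3) so that Theorem~\ref{B3.thm2} applies. The sign condition \eqref{eq2.09local} holds because $\Theta=M_\theta$ with $\theta\ge0$: if $u,v\in H^1(\Omega)$ satisfy $uv\ge0$ a.e.\ in $\Omega$, then $\ip{\Theta(t)u,v}=\int_{\partial\Omega}\theta(x,t)\,u(x)v(x)\,dS_x\ge0$, as noted in Remark~\ref{rmk2.5pe}. For the local-boundedness part of (H3) I would invoke Theorem~\ref{thm3.3flu}, whose hypothesis (H1) has already been verified and whose case (i) ($m=1$, $\Omega$ Lipschitz, $\theta\ge0$) is exactly the present setting; it supplies $C>0$ and $r_0>0$ such that every weak solution of \eqref{eqj.1b} satisfies the $\sL_\infty$-estimate \eqref{30eq2a} on $Q^-_{R/2}(X_0)\cap Q$ for all $x_0\in\overline\Omega$ and $0<R<\min(\sqrt{b-a},r_0)$, and likewise for the adjoint equation. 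Taking $R_1=\min(r_0,\diam\Omega)$---admissible since $\Omega$ is bounded and the constant $A_1$ may be enlarged, cf.\ Remark~\ref{rmk2.5pe}---the essential-supremum bound over a one-sided neighbourhood of $\Omega\times\{b\}$ controls the $L^2(\Omega)$-trace of $u$ at $t=b$, which yields $|u(x,b)|\le A_1 R^{-(n+2)/2}\|u\|_{\sL_2(\Omega\times(b-R^2,b))}$ for a.e.\ $x\in\overline\Omega$, and symmetrically for adjoint solutions at $t=a$. This is (H3), and Theorem~\ref{B3.thm2} then gives the Gaussian bound \eqref{3.eq2}.

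The main obstacle I anticipate is this last reduction: passing from the cylindrical $\sL_\infty$-estimate \eqref{30eq2a} to the pointwise-in-time form at $t=b$ demanded by (H3). Here one uses that a weak solution in $\sV_2^{1,0}$ is, by the very definition of that space, continuous as a map $t\mapsto u(\cdot,t)\in L^2(\Omega)$, so its trace on $\Omega\times\{b\}$ is the $L^2$-limit of $u(\cdot,t)$ as $t\uparrow b$ and is hence dominated a.e.\ by the essential supremum over the cylinders $Q^-_{R/2}(X_0)$, $x_0\in\overline\Omega$, which together cover a neighbourhood of $\Omega\times\{b\}$ in $\overline Q$; a routine limiting argument also disposes of the borderline value $R=\sqrt{b-a}$ excluded by the strict inequality in \eqref{30eq2a}. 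Everything else is bookkeeping---aligning the quantitative hypotheses \eqref{eq2.09ex}, \eqref{eq2.10ex} with those of Lemma~\ref{lem2.2pe}, and the qualitative ones with those of Theorems~\ref{B3.thm1}, \ref{B3.thm2} and \ref{thm3.3flu}.
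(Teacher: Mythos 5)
Your proposal is correct and is exactly the paper's route: verify (H1) via Lemma~\ref{lem2.2pe} (using that in the scalar case $\theta\ge0$ is the same as $\theta\vec\xi\cdot\vec\xi\ge0$), (H2) via Remark~\ref{rmk2.4pe}\,i), and (H3) via Remark~\ref{rmk2.5pe} together with Theorem~\ref{thm3.3flu}\,(i), then apply Theorems~\ref{B3.thm1} and~\ref{B3.thm2}. The point you flag---passing from the cylindrical $\sL_\infty$-bound \eqref{30eq2a} to the pointwise-in-time statement at $t=b$ required by (H3) by using the $\sV^{1,0}_2$ continuity of $t\mapsto u(\cdot,t)$ in $L^2$, and handling the borderline radius $R=\sqrt{b-a}$ by a limiting argument---is precisely the small bookkeeping step that the paper leaves to the reader when it declares the corollary an ``immediate consequence.''
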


\begin{corollary}
Let $\Omega$ be a $C^1$ domain and the coefficients of $\sL$ belong to $VMO_x$.
Suppose $\Theta=M_\theta$, where $\theta \in \sL_\infty(\partial\Omega\times (-\infty,\infty))^{m\times m}$ is such that $\theta$ is nonnegative definite and satisfies \eqref{eq2.10ex}.
Then there exists a Green's function for \eqref{RP} and it satisfies the conclusions of Theorems \ref{B3.thm1} and \ref{B3.thm2}.
\end{corollary}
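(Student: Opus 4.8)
The plan is to recognize this as a direct specialization of the main results: it suffices to verify that hypotheses (H1), (H2), and (H3) all hold under the stated assumptions, after which Theorems~\ref{B3.thm1} and \ref{B3.thm2} apply without change.

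I would first check (H1) by means of Lemma~\ref{lem2.2pe}. A bounded $C^1$ domain is Lipschitz, hence an $H^1$-extension domain, so \eqref{extop} holds; since $\partial\Omega$ is compact, a matrix-valued $L^\infty$ function $\theta$ on $\partial\Omega\times(-\infty,\infty)$ satisfies $\esssup_t\norm{\theta(\cdot,t)}_{L^p(\partial\Omega)}\le\abs{\partial\Omega}^{1/p}\norm{\theta}_{L^\infty}<\infty$ for every finite $p$, so $\theta$ belongs to the class \eqref{eq2.09ex}; the nondegeneracy \eqref{eq2.10ex} is assumed, and nonnegative definiteness of $\theta$ gives $\theta(x,t)\vec\xi\cdot\vec\xi\ge0$ for all $\vec\xi\in\bR^m$. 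Hence Lemma~\ref{lem2.2pe} yields (H1). Hypothesis (H2) is then immediate, being exactly case iii) of Remark~\ref{rmk2.4pe} since the coefficients of $\sL$ lie in $VMO_x$.

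It remains to verify (H3). The sign condition \eqref{eq2.09local} holds because $\Theta=M_\theta$ with $\theta$ symmetric and nonnegative definite, which is precisely the situation of Remark~\ref{rmk2.5pe}. For the local boundedness requirement I would appeal to Theorem~\ref{thm3.3flu}, whose hypothesis (ii) is met here: $\Omega$ is $C^1$, the $A^{\alpha\beta}$ are $VMO_x$, and $\theta$ is bounded. The bound \eqref{30eq2a} controls $\vec u$ in $\sL_\infty$ over half-cylinders based at top-boundary points $(x_0,b)$; covering $\overline\Omega$ by finitely many of the balls $B_{R/2}(x_0)$ and using that $\vec u(\cdot,b)$ is the $L^2(\Omega)$-limit of $\vec u(\cdot,t)$ as $t\uparrow b$ (so that its essential supremum is dominated by the right-hand side of \eqref{30eq2a}), one recovers exactly the pointwise a.e.\ estimate on $\Omega\times\set{b}$ demanded by (H3), with $R_1$ taken to be the $r_0$ of Theorem~\ref{thm3.3flu} (and replaceable by $1$ or $\diam\Omega$ by the last sentence of Remark~\ref{rmk2.5pe}); the adjoint case is handled identically. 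With (H1)--(H3) in hand, Theorem~\ref{B3.thm1} produces the Robin Green's function with the asserted regularity and pointwise bounds, and Theorem~\ref{B3.thm2} gives the Gaussian estimate.

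As for the main obstacle: there really is none of substance here, since all the work resides in the cited statements and the argument is essentially bookkeeping of hypotheses. The only step beyond a bare citation is the passage from \eqref{30eq2a} --- an $\sL_\infty$ bound on a half-cylinder not containing the top time slice --- to the pointwise a.e.\ estimate on $\Omega\times\set{b}$ required in (H3); this is routine given the $L^2$-trace characterization of $\vec u(\cdot,b)$ for $\vec u\in\sV^{1,0}_2$, but it is where the (modest) genuine argument lies.
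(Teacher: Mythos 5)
Your proof is correct and follows the route the paper intends: the paper simply declares the corollaries "immediate consequences" of Theorems \ref{B3.thm1}--\ref{thm3.3flu}, Lemma \ref{lem2.2pe}, and Remarks \ref{rmk2.4pe}--\ref{rmk2.5pe}, and you have correctly spelled out how (H1), (H2), and (H3) are each verified from those ingredients. The only step the paper leaves implicit -- passing from the interior-in-time $\sL_\infty$ bound \eqref{30eq2a} to the a.e.\ pointwise estimate on the top slice $\Omega\times\{b\}$ via the $L^2$-continuity of $t\mapsto\vec u(\cdot,t)$ for $\vec u\in\sV^{1,0}_2$, together with a finite cover of $\overline\Omega$ -- is exactly the one you identified and filled in.
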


\begin{corollary}
Let $n=2$ and $\Omega$ be a Lipschitz domain and the coefficient of $\sL$ are $t$-independent.
Suppose $\Theta=M_\theta$, where $\theta \in \sL_\infty(\partial\Omega\times (-\infty,\infty))^{m\times m}$ is such that $\theta$ is $t$-independent, nonnegative definite and satisfies \eqref{eq2.10ex}.
Then there exists a Green's function for \eqref{RP} and it satisfies the conclusions of Theorems \ref{B3.thm1} and \ref{B3.thm2}.
\end{corollary}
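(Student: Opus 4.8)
The plan is to reduce the estimate \eqref{30eq2a} to a local statement near a boundary point and then dispose of the three cases by three different regularity mechanisms, each of which produces an $\sL_\infty$ bound with the parabolic scaling $R^{-(n+2)/2}$. First I would note that the \emph{interior} version of \eqref{30eq2a} (the case $\dist(x_0,\partial\Omega)\ge R$) is already contained in (H2), which holds in all three situations by Remark~\ref{rmk2.4pe}, and that a routine covering-and-iteration argument promotes the single-scale bound of (H2) to the full $\sL_\infty$ bound over $Q^{-}_{R/2}(X_0)$. Hence it suffices to prove the estimate with $x_0\in\partial\Omega$; after localizing and parabolically rescaling to unit size, one flattens $\partial\Omega$ near $x_0$ by a bi-Lipschitz change of the spatial variable (a $C^1$ one in case (ii)), which preserves the ellipticity \eqref{B2.eq1} and, in case (ii), the $VMO_x$ property of the $A^{\alpha\beta}$, and turns \eqref{eqj.1b} into a conormal/Robin problem on a boundary half-cylinder. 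Throughout, the term $\int\ip{\Theta\vec u,\vec\phi}\,dt$ is controlled via the trace inequality \eqref{eq2.16wine} together with $\theta\in\sL_\infty$ (or $\theta\ge 0$ in case (i)).

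Case (i) ($m=1$, $\Omega$ Lipschitz, $\theta\ge 0$): here I would run the De Giorgi--Nash--Moser iteration directly on the scalar solution $u$, testing the weak form of \eqref{eqj.1b} with parabolic cutoffs of the type $\zeta^2 u_+^{2\beta-1}$ (truncated at a level $k$ and then letting $k\to\infty$). The only term absent from the Neumann case is the boundary integral $\int_{\partial\Omega}\theta\,u\,\zeta^2 u_+^{2\beta-1}\,dS_x=\int_{\partial\Omega}\theta\,\zeta^2 u_+^{2\beta}\,dS_x\ge 0$, which can simply be discarded since it sits on the favorable side of the resulting inequality; applying the same argument to $v=-u$ (also a solution of \eqref{eqj.1b}) handles $u_-$. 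The Caccioppoli inequality so obtained, the Sobolev inequality up to the boundary (available because $\Omega$ is an extension domain, cf. \eqref{extop} and \eqref{eq2.10ar}), and Moser's iteration over the shrinking cylinders $Q^{-}_r(X_0)\cap Q$ yield \eqref{30eq2a}.

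Cases (ii) and (iii) (the genuinely vectorial cases, where the scalar iteration is unavailable): here I would instead bootstrap integrability. In case (ii) I would rewrite the Robin condition as a conormal (Neumann) condition with data $\vec g:=-\theta\,\vec u\vert_S$, which lies in $\sL_q(S)$ whenever $\vec u\vert_S$ does because $\theta\in\sL_\infty$, and invoke the up-to-the-boundary parabolic $\sW^{1,p}$-estimates for the conormal problem with $VMO_x$ coefficients on a flat boundary developed in \cite{CDK, CDK2}; starting from $\vec u\in\sV_2$, hence $\vec u\in\sL_{2(n+2)/n}$ by \eqref{eq2.14ar}, each application of the $\sW^{1,p}$-estimate followed by the parabolic Sobolev embedding gains a fixed amount of integrability, so after finitely many steps the exponent is large enough that one further step gives boundedness (in fact Morrey--Campanato continuity) up to the boundary, with the parabolic dilation producing the factor $R^{-(n+2)/2}$; at small scales---this is where $r_0$ enters---the boundary potential $\theta\vec u$ is a lower-order perturbation that does not disturb the iteration. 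In case (iii) ($n=2$, $t$-independent $A^{\alpha\beta}$ and $\theta$) I would use two-dimensional elliptic regularity in the guise of a hole-filling (Widman) estimate: a Caccioppoli inequality for $\vec u$ minus its spatial average, combined with the two-dimensional Poincar\'e--Sobolev inequality on $\Omega$ (again using the extension property), yields a Morrey-type decay $\int_{Q^{-}_r(X_0)\cap Q}\abs{D\vec u}^2\le C(r/R)^{2\alpha}\int_{Q^{-}_R(X_0)\cap Q}\abs{D\vec u}^2$ for some $\alpha>0$---the $t$-independence of the coefficients being precisely what lets the argument mimic the elliptic one, and the bounded Robin term being absorbed for $r$ small; this decay plus the parabolic embedding gives H\"older continuity up to the boundary and hence \eqref{30eq2a}. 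The adjoint statements follow by applying the same arguments to $\sL^{*}$ and $\Theta^{*}$, noting that $\theta^{*}$ inherits the relevant properties of $\theta$ (and $\Theta^{*}=\Theta$ in case (i)).

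The main obstacle is case (ii): the bootstrap must be arranged so that each step gains a \emph{fixed} amount of integrability, with constants uniform over the finitely many steps and correctly homogeneous under the parabolic rescaling, and the contribution of $\Theta$ must be fed into the $\sW^{1,p}$ theory as genuine Neumann data---which requires the trace estimates on the rescaled cylinders to be compatible with that theory and the smallness of $r_0$ to render the boundary potential negligible. A secondary difficulty is carrying out the flattening in case (iii) so that the hole-filling decay survives the merely Lipschitz boundary while still closing in the presence of the Robin term.
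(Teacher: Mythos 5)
Your proposal sets out to (re-)prove the local boundedness estimate \eqref{30eq2a} of Theorem~\ref{thm3.3flu}, whereas the paper treats the present corollary as immediate: one verifies (H1) via Lemma~\ref{lem2.2pe} (nonnegative definiteness of $\theta$ together with \eqref{eq2.10ex}), (H2) via Remark~\ref{rmk2.4pe}(ii) for two-dimensional, $t$-independent coefficients, and (H3) via Remark~\ref{rmk2.5pe} and Theorem~\ref{thm3.3flu}(iii), then invokes Theorems~\ref{B3.thm1} and~\ref{B3.thm2}. You do cite Remark~\ref{rmk2.4pe} for (H2), but you never address (H1): boundedness and nonnegativity of $\theta$ alone do not yield the coercivity \eqref{2.eq4}, and the nondegeneracy \eqref{eq2.10ex} must be used through Lemma~\ref{lem2.2pe}'s compactness argument. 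That omission is minor because Lemma~\ref{lem2.2pe} is already in the paper; the substantive issue lies in your sketch of case (iii), which is exactly the case this corollary needs.

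For case (iii), you propose a hole-filling (Widman) argument: parabolic Caccioppoli for $\vec u$ minus its spatial average, the two-dimensional Poincar\'e--Sobolev inequality, and ``$t$-independence \ldots lets the argument mimic the elliptic one.'' As stated, this does not close. Testing the weak form with $\zeta^2(\vec u-\vec c)$ produces the Caccioppoli estimate \emph{together with} the term $\tfrac12\partial_t\int\zeta^2\abs{\vec u-\vec c}^2$; the classical Widman hole-filling, which gives Morrey decay for elliptic systems in $\bR^2$, has no counterpart that absorbs this parabolic term without further information about $\vec u_t$. You assert that $t$-independence ``lets the argument mimic the elliptic one,'' but the sketch supplies no mechanism; $t$-independence of the coefficients does not by itself annihilate $\vec u_t$. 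The paper's actual mechanism is precisely the missing ingredient: because $A^{\alpha\beta}$ and $\theta$ are $t$-independent, $\vec u_t$ is again a weak solution of \eqref{eq5.38pvd}, which via Lemma~\ref{lem5.6er} gives the uniform-in-time slice bounds \eqref{b9}--\eqref{jk6}. One can then freeze $t$ and treat $-D_\alpha(A^{\alpha\beta}D_\beta\vec u(\cdot,s))=-\vec u_t(\cdot,s)$ as an \emph{elliptic} Robin system with $L^2=L^n$ data on each slice, apply the elliptic reverse H\"older inequality up to the boundary (Lemma~\ref{er.lem2}) slice by slice, and only then obtain the Morrey-type decay of $\int_{U_r^-}\abs{D\vec u}^2$ that feeds into the Campanato/H\"older step you describe. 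Without the observation that $\vec u_t$ solves the same problem (and the resulting slice-wise control of $\vec u_t$), your Morrey decay claim $\int_{Q^-_r\cap Q}\abs{D\vec u}^2\le C(r/R)^{2\alpha}\int_{Q^-_R\cap Q}\abs{D\vec u}^2$ is not justified, so \eqref{30eq2a} for case (iii) --- and hence (H3) for this corollary --- does not follow from your argument.

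Two smaller remarks. First, the paper does not flatten the boundary; it works directly on the Lipschitz domain using localized Sobolev and trace inequalities such as \eqref{eq5.31cr}, so your bi-Lipschitz flattening is an unnecessary and potentially delicate step (a bi-Lipschitz change of variable does preserve \eqref{B2.eq1}, but your subsequent argument relies on it only nominally). Second, in case (ii) the boundary $W^{1,p}$ estimates for the conormal problem with $VMO_x$ coefficients come from \cite{DKd11}, not \cite{CDK,CDK2}; and the paper's bootstrap is organized via a Neumann auxiliary function $\vec V$ (see \eqref{19eq1c}--\eqref{13-eq1a}), not by feeding $-\theta\vec u$ directly as Neumann data. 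These are matters of execution, but they matter precisely because, as you note yourself, the bootstrap must gain a fixed amount of integrability with constants uniform under the parabolic rescaling.
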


\section{Elliptic Robin Green's function}			\label{sec:4.2enf}
Let us consider elliptic differential operator of the form
\[
L \vec u=  -\frac{\partial}{\partial x_\alpha} \left(A^{\alpha\beta}(x) \frac{\partial \vec u}{\partial x_\beta}\right),
\]
where $A^{\alpha\beta}(x)$ are $m\times m$ matrices whose elements $a^{\alpha\beta}_{ij}(x)$ are bounded measurable functions satisfying \eqref{B2.eq1}, and its adjoint operator $L^{*}$ defined by
\[
L^{*} \vec u=  -\frac{\partial}{\partial x_\alpha} \left(A^{\beta\alpha}(x)^\top \frac{\partial \vec u}{\partial x_\beta}\right).
\]
We consider Robin boundary value problem
\begin{equation}	\label{ERP}
\tag{RP'}
\left\{
\begin{aligned}
L\vec u&=\vec f \quad\text{in }\;\Omega,\\
\partial \vec u/\partial \nu+\Theta \vec u&= 0\quad \text{on }\;\partial\Omega,
\end{aligned}
\right.
\end{equation}
where $\Theta \in \cB(H^{1/2}(\partial \Omega)^m, H^{-1/2}(\partial \Omega)^m)$.
Given $\vec f \in H^{-1}(\Omega)^m$, we say that $\vec u$ is a weak solution of the problem \eqref{ERP} if we have $\vec u \in H^1(\Omega)^n$ and
\[
\int_\Omega A^{\alpha\beta}D_\beta \vec u\cdot D_\alpha \vec \phi\,dx+ \ip{\Theta \vec u,\vec \phi}= \ip{\vec f,\vec \phi}, \quad \forall \vec \phi \in H^{1}(\Omega)^m.
\]
It can be shown via standard elliptic theory that there exists a unique weak solution in $H^{1}(\Omega)^m$ of the problem \eqref{ERP}.
We say that an $m\times m$ matrix valued function $\vec G(x,y)$ is the Green's function for \eqref{ERP}
if it satisfies the following properties:
\begin{enumerate}[i)]
\item
$\vec G(\cdot,y) \in W^{1,1}_{loc}(\Omega)^{m\times m}$ and $\vec G(\cdot,y) \in W^{1,2}(\Omega\setminus B_r(y))^{m\times m}$ for all $y\in\Omega$ and $r>0$.
\item
$L\vec G(\cdot,y)=\delta_y \vec I$ in $\Omega$,  $[\partial \vec /\partial \nu + \Theta ]  \vec G(\cdot,y)=0$ on $\partial\Omega$ for all $y\in\Omega$ in the  sense that
\[
\int_\Omega A^{\alpha\beta} D_\beta \vec G_{\cdot k}(\cdot,y)\cdot D_\alpha \vec \phi\,dx+ \ip{\Theta \vec G_{\cdot k}(\cdot,y)\vert_{\partial\Omega},\vec \phi}=\phi^k(y),\quad \forall \vec \phi \in C^\infty(\bar\Omega)^m.
\]
\item
For any $\vec f \in C^\infty(\bar\Omega)^m$, the function $\vec u$ defined by
\begin{equation}	\label{eq4.01e}
\vec u(x)=\int_\Omega \vec G(y,x)^\top \vec f(y)\,dy
\end{equation}
is the weak solution in $H^1(\Omega)^m$ of the adjoint Robin problem
\begin{equation}		\label{eq4.02e}
\left\{
\begin{aligned}
L^{*}\vec u&=\vec f \quad\text{in }\;\Omega,\\
\partial \vec u/\partial \nu^{*}+\Theta^{*} \vec u&= 0\quad \text{on }\;\partial\Omega.
\end{aligned}
\right.
\end{equation}
\end{enumerate}
Note the property iii) gives the uniqueness of the Robin Green's function.

The following assumptions are parallel with the conditions (H1) - (H3).
\begin{enumerate}[{\bf{H}1'.}]
\item
We assume that $\Omega$ is an extension domain for $H^1$ function so that \eqref{extop} holds for some constant $\sE_0$.
We assume that $\Theta \in \cB(H^{1/2}(\partial \Omega)^m, H^{-1/2}(\partial \Omega)^m)$ and there exist constants $\tilde{\lambda} \in (0,\lambda)$ and $\vartheta_0>0$ such that
\[
\vartheta_0 \norm{\vec u}_{H^1(\Omega)}^2  \le  \tilde{\lambda} \norm{D\vec u}_{L^2(\Omega)}^2+\ip{\Theta \vec u,\vec u},\quad \forall  \vec u \in H^1(\Omega)^m.
\]

\item
There exist constants $\mu_0\in(0,1]$, and $A_0>0$ such that if $\vec u$ is a weak solution of either $L\vec u= 0$ or $L^{*} \vec u=0$ in $B_r=B_r(x)$,  where $x\in \Omega$ and $0<r< \dist(x,\partial \Omega)$, then we have
\[
[\vec u]_{\mu_0; B_{r/2}} \le A_0 r^{-\mu_0}\left(\fint_{B_r} \abs{\vec u}^2\right)^{1/2},
\]
where $[\vec u]_{\mu_0; B_{r/2}}$ is the usual H\"older seminorm.

\item
We have $\ip{\Theta \vec u, \vec v} \ge 0$ for all $\vec u, \vec v \in H^1(\Omega)^m$ satisfying $\vec u\cdot \vec v \ge 0$ a.e. in $\Omega$.
Also, there exist constants $A_1, R_1>0$ such that if $\vec u$ is a weak solution of
\begin{align*}
\vec u_t - L\vec u=0 \quad \text{in }\, \Omega\times(0,T), &\qquad \partial \vec u/\partial \nu +\Theta \vec u =0 \quad \text{on }\, \partial\Omega\times(0,T)\\
\text{or} \quad \vec u_t-L^{*} \vec u=0 \quad \text{in }\, \Omega\times(0,T),&\qquad \partial \vec u/\partial \nu^{*} +\Theta^{*} \vec u=0 \quad \text{on }\, \partial\Omega\times(0,T),
\end{align*}
then $\vec u$ is locally bounded and for a.e. $x \in \bar\Omega$, we have
\[
\abs{\vec u(x,T)} \le A_1 R^{-(n+2)/2}\norm{\vec u}_{\sL_2(\Omega \times (T-R^2,T))},
\]
where $R = \min(\sqrt{T}, R_1)$.
\end{enumerate}

\begin{remark}		\label{rmk4.4pe}
Similar to Remark~\ref{rmk2.4pe}, below are some examples of cases when the condition (H2') holds.
\begin{enumerate}[i)]
\item
The scalar case ($m=1$).

\item
Two dimensional case ($n=2$).

\item
The coefficients of $L$ belong to VMO.
\end{enumerate}
\end{remark}

\begin{theorem}		\label{thm4.4a}
Assume the conditions (H1') and (H2').
Then, there exists the Green's function $\vec G(x,y)$ for \eqref{ERP}.
It is continuous in $\set{(x,y)\in\Omega\times\Omega: x\neq y}$.
Also, we have
\[
\vec G(y,x)= \vec G^{*}(x,y)^\top
\]
where $\vec G^{*}(x,y)$ is the Green's function of the adjoint problem \eqref{eq4.02e}.
If we further assume (H3'), then for any $x, y \in \Omega$ with $x\neq y$, we have the following pointwise estimates:
\begin{enumerate}[i)]
\item
$n=2$
\[
\abs{\vec G(x,y)} \le C\left\{1+\ln \left(\frac{\diam \Omega}{\abs{x-y}}\right)\right\},
\]

\item
$n\ge 3$
\[
\abs{\vec G(x,y)} \le C\abs{x-y}^{2-n},
\]
\end{enumerate}
where $C$ depends only on $n,m, \lambda, \diam \Omega$ and and parameters in (H1') and (H3').
\end{theorem}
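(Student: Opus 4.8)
The plan is to obtain $\vec G$ by integrating the Robin heat kernel furnished by Theorem~\ref{B3.thm1} and to read off the pointwise bounds from the Gaussian estimate of Theorem~\ref{B3.thm2}. Since the coefficients $A^{\alpha\beta}$ and the operator $\Theta$ are $t$-independent, (H1'), (H2'), (H3') are precisely the time-independent instances of (H1), (H2), (H3) for the parabolic operator $\sL=\partial_t-L$ on $Q=\Omega\times(-\infty,\infty)$, so Theorem~\ref{B3.thm1} produces the Robin Green's function $\vec\cG$; by uniqueness it is invariant under translation in $t$, hence $\vec K(x,y,t):=\vec\cG(x,t,y,0)$ depends only on $t>0$, and we set
\[
\vec G(x,y):=\int_0^\infty \vec K(x,y,t)\,dt,\qquad x\neq y.
\]
Granting convergence (addressed below), the defining properties i)--iii), continuity off the diagonal, and the symmetry $\vec G(y,x)=\vec G^{*}(x,y)^{\top}$ are inherited from $\vec\cG$: i) and ii) by passing to the limit in the weak identity defining $\vec\cG$ together with the integral estimates of the next paragraph; continuity from the interior H\"older bound \eqref{eq3.08d} integrated in $t$ (note $\Omega$ is open, so one only needs continuity on compact subsets of $\Omega$); the symmetry as the time-integrated form of $\vec\cG^{*}(X,Y)=\vec\cG(Y,X)^{\top}$; and iii) by writing, for $\vec f\in C^\infty(\bar\Omega)^m$, $\int_\Omega\vec G(y,x)^{\top}\vec f(y)\,dy=\int_0^\infty\!\bigl(\int_\Omega\vec K(y,x,t)^{\top}\vec f(y)\,dy\bigr)dt$, recognising the inner integral as the time-$t$ solution of the adjoint parabolic Robin problem with initial datum $\vec f$ (part c) of the definition in Section~\ref{2.rp}, after a cut-off-in-time approximation), and identifying the full integral with the unique weak solution of \eqref{eq4.02e} by Duhamel's formula.

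The one genuinely new ingredient is time-decay of $\vec K$. Testing the weak formulation of \eqref{EP.1b1} against the solution itself and using strong ellipticity together with the coercivity \eqref{2.eq4} of (H1') gives $\tfrac{d}{dt}\norm{\vec u(\cdot,t)}_{L^2(\Omega)}^2\le -2\vartheta_0\norm{\vec u(\cdot,t)}_{L^2(\Omega)}^2$ for $\vec u(\cdot,t)=\int_\Omega\vec\cG(\cdot,t,y,0)\vec\psi(y)\,dy$, hence $\norm{\vec u(\cdot,t)}_{L^2(\Omega)}\le e^{-\vartheta_0 t}\norm{\vec\psi}_{L^2(\Omega)}$: the Robin heat semigroup contracts exponentially on $L^2(\Omega)$. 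Combined with the reproducing identity $\vec K(x,y,t+\tau)=\int_\Omega\vec K(x,z,t)\vec K(z,y,\tau)\,dz$ and the estimates of Remark~\ref{rmk3.8} (on a unit time window, with cut-off scale comparable to $\dist(y,\partial\Omega)$), this yields, for $t$ bounded away from $0$, $L^2(\Omega)$-control of $\vec K(\cdot,y,t)$ and $D\vec K(\cdot,y,t)$ decaying like $e^{-ct}$, while for $0<t\lesssim 1$ and $x\neq y$ the local bounds \eqref{eq3.07c}, \eqref{eq3.08d} and Remark~\ref{rmk3.8} give $W^{1,1}_{loc}(\Omega)$- and $W^{1,2}(\Omega\setminus B_r(y))$-bounds for $\vec K(\cdot,y,t)$ integrable near $t=0$. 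Minkowski's inequality in $t$ then gives convergence of $\int_0^\infty\vec K(\cdot,y,t)\,dt$ in $W^{1,2}(\Omega\setminus B_r(y))^{m\times m}$ and in $W^{1,1}_{loc}(\Omega)^{m\times m}$, completing the first part of the theorem.

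Now assume (H3'), so Theorem~\ref{B3.thm2} applies and $\abs{\vec K(x,y,t)}\le C\min(\sqrt{t},\diam\Omega)^{-n}\exp(-\kappa\abs{x-y}^2/t)$. Put $T_0=(\diam\Omega)^2$. For $t\in(0,T_0]$, the substitution $s=\abs{x-y}^2/t$ gives
\[
\int_0^{T_0}\frac{C}{t^{n/2}}e^{-\kappa\abs{x-y}^2/t}\,dt=C\abs{x-y}^{2-n}\int_{\abs{x-y}^2/T_0}^{\infty}s^{\frac{n}{2}-2}e^{-\kappa s}\,ds,
\]
and since $\abs{x-y}\le\diam\Omega$: for $n\ge 3$ the last integral is $\le\int_0^\infty s^{n/2-2}e^{-\kappa s}\,ds<\infty$, yielding $\abs{\vec G(x,y)}\lesssim\abs{x-y}^{2-n}$, whereas for $n=2$ it equals $\int_{\abs{x-y}^2/T_0}^\infty s^{-1}e^{-\kappa s}\,ds\le C\{1+\ln(\diam\Omega/\abs{x-y})\}$. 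For $t>T_0$ one uses the large-time decay: applying (H3') to $\vec K_{\cdot k}(\cdot,y,\cdot)$ on $\Omega\times(\varepsilon,t)$ gives $\abs{\vec K(x,y,t)}\le A_1R^{-(n+2)/2}\norm{\vec K(\cdot,y,\cdot)}_{\sL_2(\Omega\times(t-R^2,t))}$ with $R=\min(\sqrt{t-\varepsilon},R_1)\to R_1$, and the right-hand side decays like $e^{-\vartheta_0(t-cT_0)}$ by the $L^2$-contraction of the previous paragraph, so $\int_{T_0}^\infty\abs{\vec K(x,y,t)}\,dt\le C$. Adding the two contributions gives i) and ii).

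\textbf{The main obstacle.} Given the parabolic machinery already in place, the small-$t$ analysis and the evaluation of the Gaussian time-integral are routine; the delicate point is upgrading the $L^2(\Omega)$-contractivity of the Robin semigroup to a \emph{pointwise} exponential decay of $\vec K(x,y,t)$ for large $t$ that is valid up to $\partial\Omega$ — this is exactly where the up-to-the-boundary local boundedness in (H3'), propagated along the flow via the reproducing identity, is indispensable, and also the place where, for the existence part in which (H3') is unavailable, one must be content with convergence in $W^{1,1}_{loc}(\Omega)$ with constants allowed to depend on $y$ and the compact subset.
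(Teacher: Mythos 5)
Your proposal is correct and follows essentially the same route as the paper: set $\vec K(x,y,t)=\vec\cG(x,t,y,0)$, show $\int_0^\infty|\vec K|\,dt<\infty$ via the differential inequality $I'(t)\le -2\vartheta_0 I(t)$ coming from (H1'), define $\vec G=\int_0^\infty \vec K\,dt$, verify the defining properties and symmetry by passage to the limit, and derive the pointwise bounds by combining the short-time Gaussian estimate from Theorem~\ref{B3.thm2} with the large-time exponential decay furnished by (H3') together with $L^2$-contractivity. The only cosmetic difference is that you split the time integral into two regions $(0,(\diam\Omega)^2]$ and $((\diam\Omega)^2,\infty)$ and evaluate the short-time part by the single substitution $s=|x-y|^2/t$, whereas the paper splits into three regions $[0,|x-y|^2]$, $[|x-y|^2,2R^2]$, $[2R^2,\infty)$; both yield the same bounds.
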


The following lemma is parallel with Lemma~\ref{lem2.2pe} and the proof is similar.
\begin{lemma}		\label{lem4.2pe}
Let $\Omega$ be a Lipschitz domain.
Suppose 
\begin{equation}	\label{eq4.09ex}
\theta \in L^{p}(\partial \Omega)^{m\times m}, \;\text{ where  $p=n-1$ if  $n\ge 3$ and $p\in (1,\infty]$ if $n=2$}.
\end{equation}
Let $\Theta=M_\theta$, the operator of multiplication with $\theta$ defined by
\[
\ip{\Theta \vec u, \vec v} := \int_{\partial\Omega}  \theta \vec u \cdot \vec v \,dS,\quad \forall \vec u, \vec v \in H^{1/2}(\partial\Omega)^m.
\]
Suppose  that there is $\delta>0$ such that
\begin{equation}		\label{eq4.10ex}
\inf_{\vec e \in \bR^m,\; \abs{\vec e}=1} \left(\int_{\partial\Omega} \theta \,dS \right) \vec e \cdot \vec e\ge \delta> 0.
\end{equation}
If either $\theta \vec \xi \cdot \vec \xi \ge 0$ for all $\vec \xi\in \bR^m$ or $\norm{\theta}_{L^{p}}$ is sufficiently small, then (H1') holds.
\end{lemma}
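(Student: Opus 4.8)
The plan is to reduce Lemma~\ref{lem4.2pe} to the already-established Lemma~\ref{lem2.2pe}, or, more honestly, to give a self-contained but parallel argument: the hypotheses here are the elliptic ($t$-independent) shadows of those in Lemma~\ref{lem2.2pe}, and the conclusion (H1') is exactly the $t$-independent version of (H1) restricted to the coercivity estimate \eqref{2.eq4-1}. First I would record that, since $\Omega$ is a Lipschitz domain, it is an $H^1$-extension domain, so the first part of (H1') (existence of $E$ with \eqref{extop}) is automatic; it remains only to verify the coercivity inequality
\[
\vartheta_0 \norm{\vec u}_{H^1(\Omega)}^2 \le \tilde{\lambda}\norm{D\vec u}_{L^2(\Omega)}^2 + \ip{\Theta\vec u,\vec u},\qquad \forall\,\vec u\in H^1(\Omega)^m,
\]
for some $\tilde{\lambda}\in(0,\lambda)$ and $\vartheta_0>0$, where $\ip{\Theta\vec u,\vec u}=\int_{\partial\Omega}\theta\,\vec u\cdot\vec u\,dS$.

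The core is a trace/interpolation bound: for $\vec u\in H^1(\Omega)^m$ one has, by the Sobolev trace embedding on a Lipschitz domain combined with the multiplicative (Gagliardo–Nirenberg–type) inequality, a control of $\norm{\vec u}_{L^{q}(\partial\Omega)}$ for the exponent $q$ dual to $p$ in \eqref{eq4.09ex} (namely $q=2(n-1)/(n-2)$ when $n\ge3$, and any finite $q$ when $n=2$), of the interpolated form
\[
\norm{\vec u\vert_{\partial\Omega}}_{L^{q}(\partial\Omega)}^2 \le C\,\norm{D\vec u}_{L^2(\Omega)}^{2\sigma}\,\norm{\vec u}_{L^2(\Omega)}^{2(1-\sigma)}
\]
for a suitable $\sigma\in(0,1)$; equivalently, by Young's inequality, for every $\varepsilon>0$ there is $C_\varepsilon$ with
\[
\norm{\vec u\vert_{\partial\Omega}}_{L^{q}(\partial\Omega)}^2 \le \varepsilon\,\norm{D\vec u}_{L^2(\Omega)}^2 + C_\varepsilon\,\norm{\vec u}_{L^2(\Omega)}^2.
\]
Hölder's inequality on $\partial\Omega$ with exponents $p$ and $p/(p-1)\le q/2$ then gives
\[
\bigl|\ip{\Theta\vec u,\vec u} - \ip{\Theta\vec u,\vec u}\bigr| \le \norm{\theta}_{L^p(\partial\Omega)}\,\norm{\vec u\vert_{\partial\Omega}}_{L^{q}(\partial\Omega)}^2,
\]
so that the ``bad part'' of the boundary term is absorbable into $\norm{D\vec u}_{L^2(\Omega)}^2$ plus a lower-order term. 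In the small-norm alternative this is already enough: choosing $\varepsilon$ so that $\norm{\theta}_{L^p}\varepsilon < \lambda-\tilde\lambda$ makes $\tilde\lambda\norm{D\vec u}_{L^2}^2+\ip{\Theta\vec u,\vec u}$ dominate a positive multiple of $\norm{D\vec u}_{L^2}^2$, and then the Poincaré-type inequality furnished by the non-degeneracy \eqref{eq4.10ex} upgrades this to control of the full $H^1$-norm.

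In the alternative hypothesis $\theta\vec\xi\cdot\vec\xi\ge0$, the boundary term is nonnegative, so the issue is purely the Poincaré inequality: one must show there is $c_0>0$ with
\[
c_0\,\norm{\vec u}_{L^2(\Omega)}^2 \le \norm{D\vec u}_{L^2(\Omega)}^2 + \int_{\partial\Omega}\theta\,\vec u\cdot\vec u\,dS,\qquad \forall\,\vec u\in H^1(\Omega)^m.
\]
This is where the main work lies, and I would do it by the standard compactness–contradiction argument (as the paper itself signals via the reference to the proof of Lemma~\ref{lem2.2pe} and Remark~\ref{rmk2.1pe}): if it fails, take $\vec u_k$ with $\norm{\vec u_k}_{L^2(\Omega)}=1$ and $\norm{D\vec u_k}_{L^2(\Omega)}^2+\int_{\partial\Omega}\theta\,\vec u_k\cdot\vec u_k\,dS\to0$; by Rellich–Kondrachov a subsequence converges in $L^2(\Omega)$ to some $\vec u$ with $\norm{\vec u}_{L^2(\Omega)}=1$ and $D\vec u=0$, so $\vec u\equiv\vec\xi$ is constant on each component; the boundary integrals $\int_{\partial\Omega}\theta\,\vec u_k\cdot\vec u_k\,dS\ge0$ tend to $0$, and one passes to the limit using the trace inequality above (the bound on $\norm{\vec u_k-\vec u}_{L^q(\partial\Omega)}$ in terms of $\norm{D(\vec u_k-\vec u)}_{L^2}$ and $\norm{\vec u_k-\vec u}_{L^2(\Omega)}$, both going to $0$) together with $\theta\in L^p(\partial\Omega)$ and Hölder, to get $\int_{\partial\Omega}\theta\,\vec\xi\cdot\vec\xi\,dS=\bigl(\int_{\partial\Omega}\theta\,dS\bigr)\vec\xi\cdot\vec\xi=0$. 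But \eqref{eq4.10ex} forces $\vec\xi=0$, contradicting $\norm{\vec u}_{L^2(\Omega)}=1$. The passage to the limit in the boundary term under merely $L^p$ weight is the delicate point — but it is handled exactly by the interpolated trace inequality, which makes the map $\vec w\mapsto\int_{\partial\Omega}\theta\,\vec w\cdot\vec w\,dS$ continuous on $H^1(\Omega)^m$ and in fact compact-ish relative to the gradient. Once $c_0>0$ is in hand, taking $\tilde\lambda\in(0,\lambda)$ arbitrarily and $\vartheta_0$ small (depending on $c_0$, $\tilde\lambda$, and the absorbed constants) yields \eqref{2.eq4-1}, i.e.\ (H1'), which is what we wanted. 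I expect the only real obstacle to be bookkeeping the exponents $p,q,\sigma$ uniformly across $n=2$ and $n\ge3$ and checking the borderline case $p=n-1$, $q=2(n-1)/(n-2)$; everything else is a routine transcription of the parabolic argument of Lemma~\ref{lem2.2pe} into the stationary setting.
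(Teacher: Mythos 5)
Your plan shares the overall skeleton with the paper's proof of the parallel Lemma~\ref{lem2.2pe} (the paper says the proof of Lemma~\ref{lem4.2pe} is ``similar''), but the key inequality you invoke is false in the critical case $n\ge3$, $p=n-1$. Your claimed interpolated trace bound $\norm{\vec u\vert_{\partial\Omega}}_{L^{q}(\partial\Omega)}^2 \le C\norm{D\vec u}_{L^2(\Omega)}^{2\sigma}\norm{\vec u}_{L^2(\Omega)}^{2(1-\sigma)}$ with $\sigma\in(0,1)$ is already false for constant $\vec u$ (left side positive, right side zero). Its $\varepsilon$-Cauchy consequence $\norm{\vec u}_{L^q(\partial\Omega)}^2\le\varepsilon\norm{D\vec u}_{L^2(\Omega)}^2+C_\varepsilon\norm{\vec u}_{L^2(\Omega)}^2$ fails as well: with $q=2(n-1)/(n-2)$ both $\norm{\vec u}_{L^q(\partial\Omega)}^2$ and $\norm{D\vec u}_{L^2}^2$ scale like $\lambda^{-(n-2)}$ under $\vec u\mapsto\vec u(\lambda\,\cdot)$ near a boundary point while $\norm{\vec u}_{L^2}^2$ scales like $\lambda^{-n}$, so letting $\lambda\to\infty$ and then $\varepsilon\to 0$ forces $\vec u\vert_{\partial\Omega}\equiv 0$, which is absurd. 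Equivalently, $q=2(n-1)/(n-2)$ is the critical trace exponent for $H^1(\Omega)$, the embedding $H^1(\Omega)\hookrightarrow L^q(\partial\Omega)$ is continuous but not compact, and no Ehrling-type absorption is available. (For $n=2$, $p>1$, one has $q<\infty$ subcritical and your estimate is fine; the obstruction is genuinely the borderline $n\ge3$ case.) This breaks your small-norm branch in two places: the absorption step is unavailable, and in the contradiction proof of your Poincar\'e inequality you cannot conclude $\norm{D\vec u_k}_{L^2}\to 0$ when $\theta$ is sign-indefinite, so you only get weak $H^1$ convergence of $\vec u_k$, which is insufficient to pass to the limit in $\int_{\partial\Omega}\theta\,\vec u_k\cdot\vec u_k$ at the critical exponent.

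The paper sidesteps all of this by decomposing $\vec u=\vec v+\vec c$ with $\vec c=\fint_{\partial\Omega}\vec u$ the boundary mean. For the zero-boundary-mean part $\vec v$ there is a trace-Poincar\'e estimate $\norm{\vec v}_{L^{2p/(p-1)}(\partial\Omega)}\le\beta\norm{D\vec v}_{L^2(\Omega)}$ with no lower-order $L^2(\Omega)$ term (the constant obstruction is gone), and the constant part $\vec c$ is handled directly by the nondegeneracy \eqref{eq4.10ex}, which yields $\int_{\partial\Omega}\theta\,\vec c\cdot\vec c\ge\delta\abs{\vec c}^2$. Completing the square on the cross term produces a single constant $\Lambda$, of size $\beta^2\bigl(\norm{\theta}_{L^p}+\delta^{-1}\abs{\partial\Omega}^{1-1/p}\norm{\theta}_{L^p}^2\bigr)$, with $\Lambda\norm{D\vec u}_{L^2}^2+\ip{\Theta\vec u,\vec u}\ge0$ for all $\vec u\in H^1(\Omega)^m$. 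That single lower bound simultaneously plays the role of your absorption step and guarantees $\norm{D\vec u_k}_{L^2}\to 0$ in the compactness-contradiction argument, which in turn gives strong $H^1$ convergence and hence convergence of the boundary term; the two alternatives in the hypothesis are used only at the very end to ensure $\Lambda+\epsilon<\lambda$. Your $\theta\ge 0$ branch is essentially correct as stated (there $\norm{D\vec u_k}_{L^2}\to 0$ follows directly from the sign, and only the valid non-interpolated trace embedding is needed), but you should incorporate the boundary-mean decomposition to repair the small-norm branch. You should also note that membership $\Theta\in\cB(H^{1/2}(\partial\Omega)^m,H^{-1/2}(\partial\Omega)^m)$ is part of (H1') and is verified via \cite{GM09}.
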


The following corollaries are easy consequences of Theorem~\ref{thm4.4a} combined with Lemma~\ref{lem4.2pe}, Remark~\ref{rmk4.4pe}, and Theorem~\ref{thm3.3flu}.

\begin{corollary}
Let $m=1$ and $\Omega$ be a Lipschitz domain.
Suppose $\Theta=M_\theta$, where $\theta \ge 0$ and satisfies \eqref{eq4.09ex} and \eqref{eq4.10ex}.
Then there exists a (scalar) Green's function for \eqref{ERP} and it satisfies the conclusions of Theorems \ref{thm4.4a}.
\end{corollary}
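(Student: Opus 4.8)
The plan is to verify the hypotheses (H1$'$), (H2$'$), and (H3$'$) of Theorem~\ref{thm4.4a} in the present scalar setting and then to invoke that theorem directly.

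First I would check (H1$'$). Since $m=1$, the assumption $\theta\ge 0$ in particular gives $\theta\vec\xi\cdot\vec\xi\ge 0$ for all $\vec\xi\in\bR^m$, so together with the integrability hypothesis \eqref{eq4.09ex} and the non-degeneracy \eqref{eq4.10ex}, Lemma~\ref{lem4.2pe} applies and yields (H1$'$), with some $\tilde\lambda\in(0,\lambda)$ and $\vartheta_0>0$ depending only on $n$, $\Omega$, $\lambda$, and $\delta$. Next, (H2$'$) holds in the scalar case by the classical De Giorgi--Nash--Moser interior H\"older estimate applied to both $L\vec u=0$ and $L^{*}\vec u=0$; this is exactly Remark~\ref{rmk4.4pe}(i), and it is available for Lipschitz (indeed arbitrary bounded) $\Omega$ with no regularity assumption on the $A^{\alpha\beta}$.

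It remains to verify (H3$'$). The first part of (H3$'$), namely that $\ip{\Theta\vec u,\vec v}\ge 0$ whenever $\vec u\cdot\vec v\ge 0$ a.e. in $\Omega$, follows from $\theta\ge 0$ exactly as in Remark~\ref{rmk2.5pe}: since $\Theta=M_\theta$ with a nonnegative multiplier, $\ip{\Theta\vec u,\vec v}=\int_{\partial\Omega}\theta\,\vec u\cdot\vec v\,dS_x$, and the sign of $\vec u\cdot\vec v$ in $\Omega$ propagates to the boundary traces. The second, quantitative half of (H3$'$) is precisely local boundedness up to the boundary for weak solutions of the parabolic Robin problem; under the present hypotheses ($m=1$, $\Omega$ Lipschitz, $\theta\ge 0$) this is supplied by Theorem~\ref{thm3.3flu}(i), which gives, for a weak solution $\vec u$ of \eqref{eqj.1b} and $X_0=(x_0,b)$ with $x_0\in\overline\Omega$,
\[
\norm{\vec u}_{\sL_\infty(Q^{-}_{R/2}(X_0)\cap Q)}\le C R^{-(n+2)/2}\norm{\vec u}_{\sL_2(\Omega\times(b-R^2,b))}
\]
for all $0<R<\min(\sqrt{b-a},r_0)$, and likewise for the adjoint problem. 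Applying this on cylinders centered at points near $x_0$ shows $\vec u$ is continuous up to $\overline\Omega$, so the left-hand side controls $\abs{\vec u(x_0,b)}$ for a.e. $x_0$; letting $x_0$ range over $\overline\Omega$ and taking $b=T$, $a=0$ gives the pointwise bound demanded in (H3$'$) with $R=\min(\sqrt{T},R_1)$, where one may take $R_1=\min(r_0,\diam\Omega)$, the change of radius being absorbed into $A_1$ as in Remark~\ref{rmk2.5pe}.

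With (H1$'$) and (H2$'$) in hand, Theorem~\ref{thm4.4a} produces the (scalar) Green's function $G(x,y)$ for \eqref{ERP}, its continuity off the diagonal, and the identity $G(y,x)=G^{*}(x,y)$; adjoining (H3$'$) yields the pointwise estimates --- logarithmic when $n=2$ and of order $\abs{x-y}^{2-n}$ when $n\ge 3$ --- which are exactly the conclusions of Theorem~\ref{thm4.4a}. The only step that is not a verbatim citation is the verification of (H3$'$): both the passage from the interior-up-to-boundary $\sL_\infty$ bound of Theorem~\ref{thm3.3flu} to the precise pointwise inequality in the normalization required by (H3$'$) (a routine covering-and-continuity argument with bookkeeping of the radius $R$) and the observation that the sign condition on $\Theta$ survives passage to boundary traces; I expect this verification to be the only mildly delicate point, and it is already essentially recorded in Remarks~\ref{rmk2.5pe} and \ref{rmk4.4pe}.
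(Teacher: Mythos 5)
Your proposal is correct and follows exactly the route the paper intends: the paper states that this corollary is an immediate consequence of Theorem~\ref{thm4.4a} combined with Lemma~\ref{lem4.2pe}, Remark~\ref{rmk4.4pe}, and Theorem~\ref{thm3.3flu}, and you have spelled out precisely that verification of (H1$'$), (H2$'$), and (H3$'$). The one point you flag as ``mildly delicate''---that $uv\ge 0$ a.e.\ in $\Omega$ propagates to the traces---does hold in the scalar $H^1$ setting (using that $u\mapsto u^{\pm}$ and $\min(\cdot,\cdot)$ commute with the trace operator), and the passage from the $\sL_\infty$ cylinder bound of Theorem~\ref{thm3.3flu} to the pointwise bound at $t=b$ demanded by (H3$'$) is, as you say, a standard covering/continuity argument with $R_1$ chosen to respect $r_0$ and constants adjusted as in Remark~\ref{rmk2.5pe}.
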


\begin{corollary}
Let $\Theta=M_\theta$, where $\theta \in L^\infty(\partial\Omega)^{m\times m}$ is nonnegative definite and satisfies \eqref{eq4.10ex}.
Assume that one of the following holds:
\begin{enumerate}[(i)]
\item
$\Omega$ is a Lipschitz domain and in $\bR^2$.
\item
$\Omega$ is a $C^1$ domain in $\bR^n$ with $n\ge 3$ and the coefficients of $L$ belong to $VMO$.
\end{enumerate}
Then there exists the Green's function for \eqref{ERP} and it satisfies the conclusions of Theorem~\ref{thm4.4a}.
\end{corollary}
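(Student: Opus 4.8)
The plan is to treat this as a verification exercise: check that each of (H1$'$), (H2$'$), and (H3$'$) of Theorem~\ref{thm4.4a} holds under either alternative set of hypotheses, and then quote that theorem verbatim. Before anything else I would record the elementary observation that, since $\Omega$ is bounded, $\partial\Omega$ is compact, so the $L^\infty$ matrix function $\theta$ automatically lies in every $L^p(\partial\Omega)^{m\times m}$, and in particular in the class demanded by \eqref{eq4.09ex}.

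For (H1$'$) I would appeal directly to Lemma~\ref{lem4.2pe}: $\Omega$ is Lipschitz (a $C^1$ domain in case (ii)), hence an $H^1$-extension domain; $\theta$ is nonnegative definite, so $\theta\vec\xi\cdot\vec\xi\ge0$ for every $\vec\xi\in\bR^m$; and \eqref{eq4.10ex} is assumed. These are precisely the hypotheses of that lemma, which yields the coercivity estimate in (H1$'$); the same data also give the parabolic hypothesis (H1) via Lemma~\ref{lem2.2pe}, which we shall need in a moment. For (H2$'$) I would quote Remark~\ref{rmk4.4pe}: the interior H\"older bound holds whenever $n=2$ or the coefficients of $L$ are in $VMO$, and one of these two situations occurs in each of cases (i) and (ii).

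The substance of the argument is (H3$'$). Its sign condition --- $\ip{\Theta\vec u,\vec v}\ge0$ whenever $\vec u\cdot\vec v\ge0$ a.e. in $\Omega$ --- follows from Remark~\ref{rmk2.5pe}, since $\Theta=M_\theta$ with $\theta$ symmetric and nonnegative definite. For the local boundedness part I would run Theorem~\ref{thm3.3flu} with the time interval $(a,b)=(0,T)$: because the coefficients of $L$ and $\theta$ are $t$-independent, case (iii) of that theorem covers our case (i) ($n=2$, $\Omega$ Lipschitz) and case (ii) covers our case (ii) ($\Omega$ a $C^1$ domain, $A^{\alpha\beta}\in VMO_x$), producing the cylindrical estimate \eqref{30eq2a} with $X_0=(x_0,T)$ for all $x_0\in\bar\Omega$ and all $0<R<\min(\sqrt{T},r_0)$. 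The remaining step, and the one I expect to require the most care, is to convert this $\sL_\infty$-on-a-cylinder bound into the pointwise-on-the-top-slice bound that (H3$'$) literally asks for: here I would use that a weak solution in $\sV^{1,0}_2$ has $t\mapsto\vec u(\cdot,t)$ continuous with values in $L^2(\Omega)^m$, so the essential supremum over $Q^{-}_{R/2}(X_0)\cap Q$ controls $\norm{\vec u(\cdot,T)}_{L^\infty(B_{R/2}(x_0)\cap\Omega)}$; letting $x_0$ sweep $\bar\Omega$, letting $R$ increase to $\min(\sqrt{T},r_0)$, and setting $R_1:=r_0$ then reproduces the estimate in (H3$'$) exactly, with the adjoint statement handled identically.

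Once (H1$'$)--(H3$'$) are in place, Theorem~\ref{thm4.4a} delivers everything asserted: existence and off-diagonal continuity of $\vec G$, the duality $\vec G(y,x)=\vec G^{*}(x,y)^\top$, and the pointwise bounds --- logarithmic when $n=2$ and of order $\abs{x-y}^{2-n}$ when $n\ge3$. No genuinely hard analysis is needed; the only mild obstacle is the bookkeeping in the third paragraph, namely matching the strict inequality $0<R<\min(\sqrt{T},r_0)$ in Theorem~\ref{thm3.3flu} with the value $R=\min(\sqrt{T},R_1)$ appearing in (H3$'$) and extracting the time-slice bound from the cylinder bound.
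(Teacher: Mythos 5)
Your proposal is correct and takes exactly the route the paper intends: the paper dismisses this corollary as an "easy consequence" of Theorem~\ref{thm4.4a}, Lemma~\ref{lem4.2pe}, Remark~\ref{rmk4.4pe}, and Theorem~\ref{thm3.3flu}, and your verification of (H1$'$)--(H3$'$) invokes precisely these ingredients (together with Lemma~\ref{lem2.2pe} and Remark~\ref{rmk2.5pe}, which are indeed needed). The extra care you take over the cylinder-to-top-slice passage and the strict-versus-nonstrict inequality in $R$ is sound and is the kind of detail the paper leaves to the reader.
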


\section{Proofs of main theorems}		\label{D}

\subsection{Proof of Theorem \ref{B3.thm1}}
Let $Y=(y,s) \in Q$ and $\epsilon>0$ be fixed but arbitrary.
Fix any $a<s-\epsilon^2$ and let $\vec v_\epsilon=\vec v_{\epsilon; Y,k}$ be the weak solution in $\sV^{1,0}_2(\Omega\times(a,b))^m$ of the problem (see Lemma~\ref{B3.lem1})
\[
\left\{
\begin{aligned}
\sL\vec u=\tfrac{1}{\abs{Q^-_\epsilon}}1_{Q^-_\epsilon(Y)} \vec e_k &\quad \text{in }\, \Omega\times (a,b),\\
\partial \vec u/\partial \nu + \Theta \vec u=  0 &\quad \text{on }\, \partial \Omega\times (a,b),\\
\vec u=0 &\quad \text{on }\, \Omega\times \set{a},
\end{aligned}
\right.
\]
where $\vec e_k$ is the $k$-th unit vector in $\bR^m$.
By setting $\vec v_\epsilon(x,t)=0$ for $t<a$ and letting $b\to \infty$, we assume that $\vec v_\epsilon$ is defined on the entire $Q$.
Then, by \eqref{enieq}, we have
\begin{equation}		\label{mn.eq1c}
\tri{\vec v_\epsilon}_{\Theta; Q} \le C \epsilon^{-n/2}.
\end{equation}
Note that by \eqref{eq2.07ej}, $\vec v_\epsilon$ satisfies for all $t_1$ the identity
\begin{multline}			\label{51.eq1b1}
\int_\Omega \vec v_\epsilon (\cdot, t_1)\cdot \vec \phi(\cdot,t_1)\,dx-\int_{-\infty}^{t_1}\int_\Omega \vec v_\epsilon\cdot \vec \phi_t\,dX+\int_{-\infty}^{t_1}\int_\Omega A^{\alpha\beta}D_\beta \vec v_\epsilon\cdot D_\alpha \vec \phi\,dX\\
+\int_{-\infty}^{t_1} \ip{\Theta\vec v_\epsilon,\vec \phi}\,dt =\frac{1}{\abs{Q^-_\epsilon}} \int_{-\infty}^{t_1}\int_\Omega 1_{Q^-_\epsilon(Y)}  \phi^k\,dX,\quad \forall \vec \phi \in \sC^\infty(\bar Q)^m. 
\end{multline}
We define \emph{the averaged Green's function} $\vec \cG^\epsilon(\cdot,Y)=(\cG^\epsilon_{jk}(\cdot,Y))_{j,k=1}^m$ by setting
\[
\cG^\epsilon_{jk}(\cdot,Y)=v_\epsilon^j=v^j_{\epsilon;Y,k}.
\]
Next, for $\vec f \in \sC^\infty_c(\Omega\times (a,b))^m$, where $a<s<b$, let $\vec u$ be the weak solution in $\sV^{1,0}_2(\Omega\times (a,b))^m$ of the \emph{backward problem}
\begin{equation}		\label{mn.eq2}		
\left\{
\begin{aligned}
\sL^{*}\vec u=\vec f\quad &\text{in }\; \Omega\times (a,b)\\
\partial \vec u/\partial \nu^{*} + \Theta^{*} \vec u=  0 \quad&\text{on }\; \partial\Omega\times (a,b)\\
\vec u=  0 \quad&\text{on }\; \Omega\times \set{b}.
\end{aligned}
\right.
\end{equation}
By setting $\vec u(x,t)=0$ for $t>b$ and letting $a\to -\infty$, we assume that $\vec u$ is defined on the entire $Q$.
Then, we have
\begin{equation}		\label{mn.eq3}
\frac{1}{\abs{Q^-_\epsilon}} \int_{Q\cap Q_{\epsilon}^-(Y)} u^k(X)\,dX =\int_{Q} \cG^\epsilon_{ik}(X,Y) f^i(X)\,dX.
\end{equation}
Suppose $\vec f$ is supported in $Q^{+}_R(X_0)$, where $0<R<d_Y:=\dist(Y,\partial_p Q)$.
Then by \eqref{enieq}, we get
\begin{equation}		\label{mn.eq4b}
\tri{\vec u}_{\Theta; Q}\le C\norm{\vec f}_{\sL_{2(n+2)/(n+4)}(Q^{+}_R(X_0))}.
\end{equation}
By utilizing \eqref{mn.eq4b}, \eqref{eq2.19ar}, and (H2), and proceeding as in \cite[Section~3.2]{CDK}, we find that $\vec u$ is continuous in $Q_{R/4}^{+}(X_0)$ and satisfies (see \cite[Eq.(3.15)]{CDK})
\begin{equation}		\label{mn.eq4d}
\norm{\vec u}_{\sL_\infty(Q^{+}_{R/4}(X_0))} \le CR^{2}\norm{\vec f}_{\sL_\infty(Q^{+}_R(X_0))}.
\end{equation}
If $Q_\epsilon^-(Y) \subset Q^{+}_{R/4}(X_0)$, then by \eqref{mn.eq3} and \eqref{mn.eq4d}, we obtain
\[
\Abs{\int_{Q^{+}_R(X_0)} \cG^\epsilon_{ik}(\cdot,Y)f^i}\le \norm{\vec u}_{\sL_\infty(Q^{+}_{R/4}(X_0))} \le CR^2\norm{\vec f}_{\sL_\infty(Q^{+}_R(X_0))}.
\]
Therefore, by duality, we get
\[
\Norm{\vec \cG^\epsilon(\cdot,Y)}_{\sL_1(Q^{+}_R(X_0))}\le CR^2,
\]
provided $0<R<d_Y$ and $Q_\epsilon^-(Y) \subset Q_{R/4}^{+}(X_0)$.
Then by repeating the proof of \cite[Lemma~3.2]{CDK}, for any $X,Y\in Q$ satisfying $0<\abs{X-Y}_\sP<d_Y/6$, we have
\begin{equation}		\label{mn.eq4h}
\abs{\vec \cG^\epsilon(X,Y)}\le C\abs{X-Y}^{-n}_\sP, \quad  \forall \epsilon<\tfrac{1}{3}\abs{X-Y}_\sP.
\end{equation}

\begin{lemma}		\label{wse.lem1}
For any $Y\in Q$, $0<R<d_Y$, and $\epsilon>0$, we have
\begin{gather}		
\label{wse.eq2b}
\tri{\vec \cG^\epsilon(\cdot,Y)}_{Q\setminus Q_r(Y)} \le Cr^{-n/2},\\
\label{wse.eq2c}
\norm{\vec \cG^\epsilon(\cdot,Y)}_{\sL_{2(n+2)/n}(Q\setminus Q_r(Y))} \le Cr^{-n/2}.
\end{gather}
Also, for any $Y\in Q$ and $\epsilon>0$, we have
\begin{align}
\label{wse.eq2e}
\bigabs{\bigset{X\in Q : \abs{\vec \cG^\epsilon(X,Y)}>\tau}} &\le C \tau^{-\frac{n+2}{n}}, \quad \forall \tau> d_Y^{-n},\\
\label{wse.eq2d}
\bigabs{\bigset{X\in Q : \abs{D_x \vec \cG^\epsilon(X,Y)}>\tau}}&\le C \tau^{-\frac{n+2}{n+1}},  \quad \forall \tau> d_Y^{-(n+1)}.
\end{align}
Furthermore, for any $Y\in Q$, $0<R<d_Y$, and $\epsilon>0$, we have
\begin{align}
\label{wse.eq2h}
\norm{\vec \cG^\epsilon(\cdot,Y)}_{\sL_p(Q_R(Y))}&\le C_{p} R^{-n+(n+2)/p}\quad \text{for }\; p\in [1,\tfrac{n+2}{n}),\\
\label{wse.eq2i}
\norm{D \vec \cG^\epsilon(\cdot,Y)}_{\sL_p(Q_R(Y))} &\le C_{p} R^{-n-1+(n+2)/p} \quad \text{for }\; p\in[1,\tfrac{n+2}{n+1}).
\end{align}
\end{lemma}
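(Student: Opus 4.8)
The strategy is standard in this line of work (cf. \cite[Lemma~3.3]{CDK} and \cite[CK2]{CK2}): derive the energy bound \eqref{wse.eq2b} directly from the weak formulation \eqref{51.eq1b1} with a well-chosen cutoff, then obtain \eqref{wse.eq2c} as a consequence of the parabolic embedding \eqref{eq2.14ar}, and finally deduce the weak-type estimates \eqref{wse.eq2e}--\eqref{wse.eq2d} and the local $\sL_p$ bounds \eqref{wse.eq2h}--\eqref{wse.eq2i} from these two together with a dyadic decomposition around $Y$ and the pointwise bound \eqref{mn.eq4h}. The point throughout is that all constants must be independent of $\epsilon$, which is the whole reason for working with the averaged Green's function $\vec \cG^\epsilon$ rather than $\vec \cG$ itself.

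\textbf{Step 1: the energy estimate \eqref{wse.eq2b}.} Fix $Y\in Q$, $0<r<d_Y$, and $k$. Let $\eta=\eta(X)$ be a smooth cutoff that vanishes on $Q_{r/2}(Y)$, equals $1$ off $Q_r(Y)$, with $\abs{D_x\eta}\le C/r$ and $\abs{\eta_t}\le C/r^2$. The plan is to test \eqref{51.eq1b1} with $\vec\phi=\eta^2\vec v_\epsilon=\eta^2\vec\cG^\epsilon_{\cdot k}(\cdot,Y)$ (after the usual Steklov-averaging or difference-quotient regularization in $t$ to justify the computation, since $\vec\phi$ must be smooth). On the support of $\eta$ the forcing term $1_{Q_\epsilon^-(Y)}$ vanishes once $\epsilon<r/2$, so the right-hand side drops out; for general $\epsilon$ one keeps it and controls it crudely by $C\epsilon^{-n/2}$ via \eqref{mn.eq1c}, or more simply one notes $\tri{\vec\cG^\epsilon(\cdot,Y)}_Q\le C\epsilon^{-n/2}$ covers the small-$r$ regime. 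Using the ellipticity \eqref{B2.eq1}, the non-degeneracy \eqref{2.eq4}, Young's inequality to absorb the cross terms involving $D_x\eta$, and the sup-in-$t$ part of $\tri{\cdot}_{\Theta;Q}$, one arrives at
\[
\tri{\eta\vec\cG^\epsilon_{\cdot k}(\cdot,Y)}_{\Theta;Q}^2
\le \frac{C}{r^2}\int_{Q_r(Y)}\abs{\vec\cG^\epsilon(X,Y)}^2\,dX + (\text{controlled terms}).
\]
The integral on the right is handled by \eqref{mn.eq4h}: since $\abs{\vec\cG^\epsilon(X,Y)}\le C\abs{X-Y}_\sP^{-n}$ on an annulus inside $Q_r(Y)$ (and the innermost $\epsilon$-ball contributes at most $C\epsilon^{-n}\abs{Q_\epsilon}=C\epsilon^{-n}\cdot\epsilon^{n+2}$, negligible), a dyadic sum of $\int_{\abs{X-Y}_\sP\sim 2^{-j}r}\abs{X-Y}_\sP^{-2n}$ over the $\sim r^{n+2}(2^{-j})^{n+2}$ volume gives $\le C r^{-n+2}$ (the top dyadic shell dominates since $2n> n+2$ for $n>2$; for $n=2$ one gets a harmless logarithmic factor that is reabsorbed, or one uses the $\sL_{2(n+2)/n}$ bound instead). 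Dividing by $r^2$ yields the $r^{-n/2}$ bound, and combining with \eqref{eq2.19ar} gives \eqref{wse.eq2b}.

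\textbf{Step 2: the remaining estimates.} Inequality \eqref{wse.eq2c} is immediate: apply \eqref{eq2.14ar} on $Q\setminus Q_r(Y)$ — more precisely to $(1-\zeta)\vec\cG^\epsilon_{\cdot k}(\cdot,Y)$ for a cutoff $\zeta$ supported in $Q_r(Y)$ — to get $\norm{\vec\cG^\epsilon(\cdot,Y)}_{\sL_{2(n+2)/n}(Q\setminus Q_r(Y))}\le\gamma_\Theta\tri{(1-\zeta)\vec\cG^\epsilon_{\cdot k}}_{\Theta;Q}\le Cr^{-n/2}$ by Step 1. For the weak-type bounds \eqref{wse.eq2e} and \eqref{wse.eq2d}: given $\tau>d_Y^{-n}$, set $r=\tau^{-1/n}<d_Y$; on $Q\setminus Q_r(Y)$ Chebyshev with \eqref{wse.eq2c} gives $\abs{\{X\in Q\setminus Q_r(Y):\abs{\vec\cG^\epsilon}>\tau\}}\le\tau^{-2(n+2)/n}\norm{\vec\cG^\epsilon}_{\sL_{2(n+2)/n}(Q\setminus Q_r(Y))}^{2(n+2)/n}\le C\tau^{-2(n+2)/n}r^{-(n+2)}=C\tau^{-(n+2)/n}$, while $\abs{Q_r(Y)}=Cr^{n+2}=C\tau^{-(n+2)/n}$ handles the rest; \eqref{wse.eq2d} is the same argument with the gradient, using the $\sL_{2(n+2)/(n+1)}$-type control of $D_x\vec\cG^\epsilon$ coming from $\tri{\cdot}$ via Sobolev embedding (or directly from \eqref{wse.eq2b} and interpolation as in \cite{CDK}), with the scaling $r=\tau^{-1/(n+1)}$. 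Finally \eqref{wse.eq2h} and \eqref{wse.eq2i}: decompose $Q_R(Y)=\bigcup_{j\ge 0}\mathcal{A}_j$ with $\mathcal{A}_j=Q_{2^{-j}R}(Y)\setminus Q_{2^{-j-1}R}(Y)$, bound $\norm{\vec\cG^\epsilon}_{\sL_p(\mathcal{A}_j)}\le\norm{\vec\cG^\epsilon}_{\sL_{2(n+2)/n}(\mathcal{A}_j)}\abs{\mathcal{A}_j}^{1/p-n/(2(n+2))}$ by Hölder, use \eqref{wse.eq2c} with $r=2^{-j-1}R$ and $\abs{\mathcal{A}_j}\le C(2^{-j}R)^{n+2}$, and sum the resulting geometric series in $j$; the series converges precisely because $p<\tfrac{n+2}{n}$ (resp. $p<\tfrac{n+2}{n+1}$ for the gradient), giving the stated powers of $R$. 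The innermost $\epsilon$-scale contributes a term bounded uniformly in $\epsilon$ by the same Hölder-plus-\eqref{mn.eq4h} estimate, so it does not spoil anything.

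\textbf{Main obstacle.} The delicate point is Step 1 — specifically, making the test-function computation rigorous (the cutoff $\eta^2\vec\cG^\epsilon_{\cdot k}$ is not in $\sC^\infty(\bar Q)^m$, so one needs a mollification/Steklov-average argument in the $t$-variable, and one must be careful that the distributional identity \eqref{51.eq1b1} is being used on a time-interval containing $s$ where the forcing lives) and, more importantly, keeping every constant independent of $\epsilon$. The bound \eqref{mn.eq4h} is only valid for $\abs{X-Y}_\sP<d_Y/6$ and $\epsilon<\tfrac13\abs{X-Y}_\sP$, so in controlling $r^{-2}\int_{Q_r(Y)}\abs{\vec\cG^\epsilon}^2$ one must separate the region $\abs{X-Y}_\sP\gtrsim\epsilon$, where \eqref{mn.eq4h} applies, from the tiny core $\abs{X-Y}_\sP\lesssim\epsilon$, where one instead uses the crude $\sL_\infty$-type bound $\abs{\vec\cG^\epsilon}\le C\epsilon^{-n}$ (from \eqref{mn.eq4d}) against the volume $\epsilon^{n+2}$; this core contribution is $O(\epsilon^2)$ and harmless, but it must be written out. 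Everything after Step 1 is a routine assembly of Hölder, Chebyshev, dyadic summation, and the already-established embeddings \eqref{eq2.14ar}, \eqref{2.eq4d}, \eqref{eq2.16wine}.
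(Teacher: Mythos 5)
Your plan is sound and follows the paper's own route (Caccioppoli estimate from the weak identity, parabolic embedding, then Chebyshev and dyadic sums), and everything after Step~1 is correct, but Step~1 has a genuine error. The Caccioppoli computation with the cutoff you describe produces error terms involving only $D\eta$ and $\eta_t$, which are supported on the \emph{annulus} $Q_r(Y)\setminus Q_{r/2}(Y)$; the correct intermediate bound is therefore
\[
\tri{\eta\vec\cG^\epsilon_{\cdot k}(\cdot,Y)}_{\Theta;Q}^2 \le \frac{C}{r^2}\int_{Q_r(Y)\setminus Q_{r/2}(Y)}\abs{\vec\cG^\epsilon(X,Y)}^2\,dX,
\]
not an integral over the full ball $Q_r(Y)$ as you wrote. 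On that annulus $\abs{X-Y}_\sP$ is comparable to $r$, so \eqref{mn.eq4h} gives $\abs{\vec\cG^\epsilon}\le Cr^{-n}$ directly, the annulus has measure at most $Cr^{n+2}$, and the product is at most $Cr^{2-n}$; no dyadic decomposition is needed, and after dividing by $r^2$ one has exactly the claimed $Cr^{-n}$.

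Your version, which integrates over the whole ball and tries a dyadic sum down to scale $\epsilon$, does not close. The integrand $\abs{X-Y}_\sP^{-2n}$ is more singular than $\abs{X-Y}_\sP^{-(n+2)}$ when $n>2$, so the \emph{inner} shells dominate, not the outer ones --- your parenthetical claim that ``the top dyadic shell dominates since $2n>n+2$'' has the logic reversed --- and summing down to the $\epsilon$-scale cutoff actually yields a factor of order $\epsilon^{2-n}$ (or $\log(r/\epsilon)$ when $n=2$), which blows up as $\epsilon\to 0$. That would make the constant in \eqref{wse.eq2b} depend on $\epsilon$, defeating the entire purpose of the lemma, namely obtaining estimates uniform in $\epsilon$ so that one can pass to the limit. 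The fix is to observe the annulus support, exactly as the paper does. Your handling of the regime $\epsilon$ comparable to or larger than $r$ via the crude bound \eqref{mn.eq1c} is correct and matches the paper. One minor stylistic difference elsewhere: the paper derives \eqref{wse.eq2h} and \eqref{wse.eq2i} by integrating the distribution functions from \eqref{wse.eq2e} and \eqref{wse.eq2d} (citing \cite[Lemmas~3.3 and 3.4]{CDK}), while you obtain them by a direct dyadic decomposition and H\"older against \eqref{wse.eq2c}; both routes work.
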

\begin{proof}
For $0<r \le d_Y$, let $\eta$ be a smooth cut-off function satisfying
\begin{equation} 	\label{51.eq1h}
0\le \eta\le1, \quad \eta\equiv 1 \text{ on }Q_{r/2}(Y), \quad \eta\equiv 0 \text{ on } \bar Q_{r}(Y)^c, \quad \abs{D \eta}^2+\abs{\eta_t}\le 16r^{-2}.
\end{equation}
Suppose $\epsilon< r/6$ and denote $Q_t=\Omega\times (-\infty,t)$.
We set $\vec \phi=(1-\eta)^2\vec v_\epsilon$ in \eqref{51.eq1b1} and carry out a formal calculation to get for all $-\infty<t<\infty$, the identity
\begin{multline*}
\frac{1}{2} \int_\Omega \abs{(1-\eta) \vec v_\epsilon}^2(\cdot,t)+ \int_{Q_t}A^{\alpha \beta} D_\beta((1-\eta) \vec v_\epsilon) \cdot D_\alpha((1-\eta)\vec v_\epsilon)+\int_{-\infty}^{t_1} \ip{\Theta \vec v_\epsilon, (1-\eta)^2 \vec v_\epsilon}\\
 + \int_{Q_t} \left\{(1-\eta)\eta_t \abs{\vec v_\epsilon}^2 - D_\beta\eta D_\alpha \eta A^{\alpha\beta}\vec v_\epsilon\cdot \vec v_\epsilon+ (1-\eta)D_\beta \eta (A^{\alpha\beta}-A^{\beta\alpha}{}^*) \vec v_\epsilon \cdot D_\alpha \vec v_\epsilon\right\} =0.
\end{multline*}
The above computation is justified by means of a standard approximation technique involving Steklov average; see proof of Lemma~\ref{B3.lem1}.
By using Cauchy's inequality, we then obtain
\begin{multline}
\label{app.eq1c}
\sup_{-\infty< t<\infty} \frac{1}{2} \int_{\Omega} \abs{(1-\eta)\vec v_\epsilon}^2(\cdot, t)\,dx + \tilde{\lambda} \int_Q \abs{D((1-\eta)\vec v_\epsilon)}^2\,dX+\int_{-\infty}^\infty\ip{\Theta \vec v_\epsilon, (1-\eta)^2 \vec v_\epsilon}\,dt \\
\le C(\lambda,\tilde{\lambda}) r^{-2} \int_{Q_r(Y)\setminus Q_{r/2}(Y)} \abs{\vec v_\epsilon}^2 \, dX.
\end{multline}
Note that $(1-\eta)^2 \vec v_\epsilon(\cdot,t) \equiv  \vec v_\epsilon(\cdot,t) \equiv (1-\eta) \vec v_\epsilon(\cdot,t)$ in $H^{1/2}(\partial\Omega)^m$ for a.e. $t$.
Then, we derive from \eqref{app.eq1c} and \eqref{mn.eq4h} that
\begin{equation}			\label{eq5.13hu}
\tri{(1-\eta)\vec v_\epsilon}_{\Theta; Q}^2 \le C r^{-n}.
\end{equation}
On the other hand, by \eqref{vtq}, \eqref{eq2.19ar}, and \eqref{mn.eq1c}, we have
\[
\tri{(1-\eta)\vec v_\epsilon}_{\Theta; Q}^2 \le \tri{\vec v_\epsilon}_{\Theta; Q}^2+ \tilde{\lambda} \int_{Q} \abs{D\vec v_\epsilon}^2 \,dX+ 2\tilde{\lambda} \int_{Q_r(Y)} \abs{D \eta}^2 \abs{\vec v_\epsilon}^2 \,dX \le C \tri{\vec v_\epsilon}_{\Theta; Q}^2  \le C \epsilon^{-n}.
\]
Therefore, the estimate \eqref{eq5.13hu} holds for all $\epsilon >0$.
Then, \eqref{wse.eq2b} and \eqref{wse.eq2c} follow from \eqref{eq5.13hu}, \eqref{eq2.19ar}, \eqref{eq2.11ar}, and the fact that $d_Y/6$ and $d_Y$ are comparable to each other.
We derive \eqref{wse.eq2h} and \eqref{wse.eq2i}, respectively, from \eqref{wse.eq2e} and  \eqref{wse.eq2d}, which in turn follow respectively from  \eqref{wse.eq2c} and \eqref{wse.eq2b}; see \cite[Lemmas~3.3 and 3.4]{CDK}.
\end{proof}

\begin{lemma}
Let $\set{u_k}_{k=1}^\infty$ be a sequence in $\sV_2(Q)$.
If $\sup_k \tri{u_k}_{Q} \le A$, then there exist a subsequence $\set{u_{k_j}}_{j=1}^\infty \subseteq \set{u_k}_{k=1}^\infty$ and $u\in \sV_2(Q)$ with $\tri{u}_{Q}\le A$ such that $u_{k_j} \rightharpoonup u$ weakly in  $\sW^{1,0}_2(\Omega\times (a,b))$ for all $-\infty<a<b<\infty$.
\end{lemma}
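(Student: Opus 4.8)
The plan is to establish a weak compactness result for sequences bounded in the $\sV_2$-norm; this is a standard diagonal-extraction argument, but one must be careful because the essential supremum appearing in the $\tri{\cdot}_Q$ norm does not pass to weak limits in an automatic way. First I would reduce the problem to finite cylinders: since $Q = \Omega \times (-\infty,\infty)$ and the definition of $\sV_2(Q)$ only requires control on $Q_T = Q \cap \set{\abs{t}<T}$ for each $T$, it suffices to extract, for each fixed pair $-\infty<a<b<\infty$, a weakly convergent subsequence in $\sW^{1,0}_2(\Omega\times(a,b))$ and then diagonalize over an exhausting sequence of intervals, say $(-j,j)$.

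The core step is the following: since $\sup_k \tri{u_k}_Q \le A$, for every fixed $-\infty<a<b<\infty$ we have $\norm{u_k}_{\sW^{1,0}_2(\Omega\times(a,b))}^2 \le A^2(b-a) + A^2 < \infty$ uniformly in $k$ (using that $\int_{\Omega\times(a,b)}\abs{u_k}^2 \le (b-a)\esssup_t \int_{\Omega(t)}\abs{u_k}^2 \le (b-a)A^2$ and $\int_{\Omega\times(a,b)}\abs{D_x u_k}^2 \le A^2$). Because $\sW^{1,0}_2(\Omega\times(a,b))$ is a Hilbert space, bounded sequences have weakly convergent subsequences. I would apply this on the intervals $(-j,j)$, $j\in\bN$, extracting nested subsequences and taking the diagonal sequence $\set{u_{k_j}}$; this single subsequence then converges weakly in $\sW^{1,0}_2(\Omega\times(-j,j))$ for every $j$, hence (by restriction) weakly in $\sW^{1,0}_2(\Omega\times(a,b))$ for every $-\infty<a<b<\infty$. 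Call the limit $u$; it is well-defined (the weak limits on overlapping intervals are consistent, since weak limits in $L^2$ are unique) and lies in $\sW^{1,0}_{2,loc}(Q)$.

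It remains to verify that $u \in \sV_2(Q)$ with $\tri{u}_Q \le A$. The spatial-gradient part is immediate from weak lower semicontinuity of the $L^2(\Omega\times(a,b))$-norm: $\int_{\Omega\times(a,b)}\abs{D_x u}^2 \le \liminf_j \int_{\Omega\times(a,b)}\abs{D_x u_{k_j}}^2 \le A^2$ for every $a<b$, hence $\int_Q \abs{D_x u}^2 \le A^2$. The delicate part—and the main obstacle—is the term $\esssup_{t\in I(Q)}\int_{Q(t)}\abs{u(x,t)}^2\,dx$: weak $\sW^{1,0}_2$ convergence on space-time does not directly control time slices. The standard remedy is to test against functions of the form $\psi(x)\chi_{(\tau_1,\tau_2)}(t)$ (approximated by smooth functions in $t$): weak convergence gives $\int_{\tau_1}^{\tau_2}\int_\Omega u\,\psi\,dx\,dt = \lim_j \int_{\tau_1}^{\tau_2}\int_\Omega u_{k_j}\psi\,dx\,dt$, and dividing by $\tau_2-\tau_1$ and letting $\tau_2\downarrow\tau_1$ at Lebesgue points, one obtains that for a.e. $t$ the slice $u(\cdot,t)$ is a weak $L^2(\Omega)$-limit of averages of $u_{k_j}(\cdot,s)$ over shrinking time intervals around $t$; combining this with the uniform bound $\int_\Omega\abs{u_{k_j}(\cdot,s)}^2 \le A^2$ for a.e. $s$ and weak lower semicontinuity yields $\int_\Omega\abs{u(\cdot,t)}^2\,dx \le A^2$ for a.e. $t$, i.e. $\esssup_t\int_{Q(t)}\abs{u}^2 \le A^2$. (Alternatively, one may invoke that $\sV_2$-bounded sequences lie in $L^\infty((a,b); L^2(\Omega))$, which is the dual of $L^1((a,b);L^2(\Omega))$, and use weak-$*$ compactness there together with uniqueness of limits to identify the weak-$*$ limit with $u$; its norm is then $\le A$ by weak-$*$ lower semicontinuity.) Putting the two bounds together gives $\tri{u}_Q^2 \le A^2$, completing the proof.
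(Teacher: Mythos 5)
The paper delegates this proof entirely to a citation (\cite[Lemma~A.1]{CDK}), so your self-contained write-up actually does more work than the paper itself; the strategy you take---reduction to finite cylinders, diagonal extraction over $(-j,j)$, weak lower semicontinuity for the gradient term, and a time-averaging (or, equivalently, weak-$*$ in $L^\infty_t L^2_x$) argument for the $\esssup$ term---is the standard route and is in the same spirit as what the cited lemma proves. The reduction, the consistency of weak limits on nested intervals, and the uniform bound $\norm{u_k}_{\sW^{1,0}_2(\Omega\times(a,b))}^2 \le (b-a)A^2 + A^2$ are all correct.

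There is, however, a genuine slip in the final step. You derive the two bounds $\int_Q\abs{D_x u}^2 \le A^2$ and $\esssup_t\int_\Omega\abs{u(\cdot,t)}^2\,dx \le A^2$ and then assert ``putting the two bounds together gives $\tri{u}_Q^2\le A^2$.'' But summing two quantities each at most $A^2$ gives at most $2A^2$; the lemma asserts the sharp constant $A$, so your argument as written does not prove the statement. The repair is easy and uses only ingredients already in your proof: do not throw away the sharper information. By weak lower semicontinuity and your time-averaging argument applied with the honest bound $\sqrt{\beta_{k_j}}$ (where $\beta_{k_j}:=\esssup_t\int_\Omega\abs{u_{k_j}(\cdot,t)}^2$) in place of $A$, you obtain
\[
\int_Q\abs{D_x u}^2 \le \liminf_j \int_Q\abs{D_x u_{k_j}}^2,
\qquad
\esssup_t\int_\Omega\abs{u(\cdot,t)}^2 \le \liminf_j \beta_{k_j},
\]
and then superadditivity of $\liminf$ for nonnegative sequences gives
\[
\tri{u}_Q^2 \le \liminf_j\int_Q\abs{D_x u_{k_j}}^2 + \liminf_j \beta_{k_j}
\le \liminf_j\Bigl(\int_Q\abs{D_x u_{k_j}}^2 + \beta_{k_j}\Bigr) = \liminf_j\,\tri{u_{k_j}}_Q^2 \le A^2.
\]
A smaller stylistic point: the phrase ``$u(\cdot,t)$ is a weak $L^2(\Omega)$-limit of averages of $u_{k_j}(\cdot,s)$'' conflates two separate limits; what actually happens is that for fixed $\tau_1<\tau_2$ the time-averages of $u_{k_j}$ converge weakly in $L^2(\Omega)$ to the time-average of $u$ (with norm bound passing through by lower semicontinuity), and then at Lebesgue points the time-averages of $u$ converge strongly to $u(\cdot,t)$ as the interval shrinks. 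Your parenthetical weak-$*$ in $L^\infty((a,b);L^2(\Omega))$ formulation is cleaner and avoids this nuance.
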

\begin{proof}
See \cite[Lemma~A.1]{CDK}.
\end{proof}

The above two lemmas contain all the ingredients for the construction of a function $\vec \cG(\cdot,Y)$ such that for a sequence $\epsilon_\mu$ tending to zero, we have
\begin{align}
\label{eq5.27a}
\vec \cG^{\epsilon_\mu}(\cdot,Y) &\rightharpoonup \vec \cG(\cdot,Y) \;\text{ weakly in }\, \sW^{1,0}_q(Q_{d_Y}(Y))^{m \times m},\\
\label{eq5.28b}
(1-\eta)\vec \cG^{\epsilon_\mu}(\cdot,Y) &\rightharpoonup (1-\eta)\vec \cG(\cdot,Y) \; \text{ weakly in }\,\sW^{1,0}_2(\Omega\times (-T,T))^{m\times m}, \;\; \forall T>0,
\end{align}
where $1<q<\frac{n+2}{n+1}$ and $\eta$ is as defined \eqref{51.eq1h} with $r= d_Y/2$.
It is routine to verify that $\vec \cG(\cdot,Y)$ satisfies the same estimates as in Lemma~\ref{wse.lem1}; see \cite[Section~4.2]{CDK}.
Then, it is clear that $\vec \cG(\cdot, Y)$ satisfies the property a) in Section~\ref{2.rp}.
We shall now show that $\vec \cG(X,Y)$ also satisfies the properties b) and c) so that $\vec \cG(X,Y)$ is indeed the Green's function for \eqref{RP}.
To verify the property b), let us assume $\vec \phi \in \sC^\infty_c(\bar \Omega\times (-T,T))^m$, where $-T<s<T$.
Then for $\eta$ satisfying \eqref{51.eq1h} with $r=d_Y$, we get from \eqref{51.eq1b1} that 
\begin{multline}		\label{51.eq2}
\frac{1}{\Abs{Q^-_{\epsilon_\mu}}} \int_{Q \cap Q^-_{\epsilon_\mu}(Y)}  \phi^k=
\int_Q A^{\alpha\beta} D_\beta \vec \cG_{\cdot k}^{\epsilon_\mu}(\cdot,Y) \cdot D_\alpha ((1-\eta)\vec \phi)+\int_Q A^{\alpha\beta}D_\beta\vec \cG_{\cdot k}^{\epsilon_\mu}(\cdot,Y) \cdot D_\alpha (\eta\vec \phi)\\
-\int_Q \vec \cG_{\cdot k}^{\epsilon_\mu}(\cdot,Y) \cdot (\eta\vec \phi)_t - \int_Q \vec \cG_{\cdot k}^{\epsilon_\mu}(\cdot,Y)\cdot ((1-\eta)\vec \phi)_t
+\int_{-T}^T \ip{\vec \Theta \vec \cG_{\cdot k}^{\epsilon_\rho}(\cdot,Y) ,\vec \phi}.
\end{multline}
Observe that $\vec u \mapsto \int_{-T}^{T}\ip{\Theta\vec u,\vec \phi}$ is a bounded linear functional on $\sW^{1,0}_2(\Omega\times (-T,T))^m$.
Therefore, by using \eqref{eq5.27a} and \eqref{eq5.28b}, and taking $\mu\to \infty$ in \eqref{51.eq2}, we verify the property b); see \cite[p. 1662]{CDK} for the details.
To verify the property c), let us assume that $\vec f$ is supported in $\bar \Omega\times (a,b)$, where $a<s<b$ and $\tilde{\vec u}$ be the unique weak solution in $\sV^{1,0}_2(\Omega\times (a,b))^m$ of the problem \eqref{mn.eq2}.
By setting $\tilde{\vec u}(x,t)=0$ for $t>b$ and letting $a\to -\infty$, we may assume that $\tilde{\vec u}$ is defined on the entire $Q$.
Then, similar to \eqref{mn.eq3}, we have
\[
\frac{1}{\abs{Q^-_\epsilon}} \int_{Q\cap Q_{\epsilon}^-(Y)} \tilde{u}^k(X)\,dX =\iint_{Q} \cG^{\epsilon_\mu}_{ik}(X,Y) f^i(X)\,dX.
\]
By the condition (H2), it follows that $\tilde{\vec u}$ is locally H\"older continuous in $Q$; see the remark we made in deriving \eqref{mn.eq4d}.
By writing $\vec f=\zeta \vec f+ (1-\zeta) \vec f$ and using \eqref{eq5.27a}, \eqref{eq5.28b}, and taking the limit $\mu\to \infty$, we then get 
\[
\tilde{\vec u}(Y)=\int_Q \vec \cG(X,Y)^\top \vec f(X)\,dX.
\]
Therefore, we have $\tilde{\vec u}\equiv \vec u$ and thus the property c) is verified.

It is clear from the construction that $\vec \cG(x,t,y,s)\equiv 0$ if $t<s$.
By a similar argument as above, we obtain the Green's function $\vec \cG^{*}(\cdot, X)$ for the adjoint problem \eqref{RPs} that satisfies the natural counterparts of  the properties of the Green's function for \eqref{RP}.
Note that the condition (H2) together with the estimates i), ii) listed in Remark~\ref{rmk3.8} and its counterparts imply that $\vec \cG(\cdot,Y)$ and $\vec \cG^{*}(\cdot,X)$ are locally H\"older continuous in $Q\setminus \set{Y}$ and $Q\setminus\set{X}$, respectively.
Using the continuity discussed above and proceeding similar to \cite[Lemma~3.5]{CDK}, we find that
\begin{equation}		\label{ctn.eq1}
\vec \cG(Y,X)=\vec \cG^{*}(X,Y)^\top, \quad \forall X,Y\in Q, \quad X\neq Y.
\end{equation}
Moreover, similar to \cite[Eqs.~(3.44) and (3.45)]{CDK}, we have
\[
\vec \cG^\epsilon(X,Y)=\frac{1}{\abs{Q^-_\epsilon}} \int_{Q\cap Q_{\epsilon}^-(Y)}  \vec \cG(X,Z)\,dZ
\]
which justifies why we call it \emph{the averaged Green's function}, and
\begin{equation}		\label{eq5.30x}
\lim_{\epsilon\to 0+} \vec \cG^\epsilon(X,Y)=\vec \cG(X,Y).
\end{equation}
By \eqref{ctn.eq1} and the counterpart of the property c) in Section~\ref{2.rp}, we see that $\vec u$ defined by the formula \eqref{eq3.04c} is a weak solution in $\sV^{1,0}_2(Q)^m$ of \eqref{eq3.05a}.

We now prove the identity \eqref{EP.1b} for the weak solution in $\sV^{1,0}_2(\Omega\times(s,\infty))^m$ of the problem \eqref{EP.1b1}.
Let $X=(x,t) \in Q$ with $t>s$ be fixed. 
Then, similar to \eqref{mn.eq3}, for $\epsilon$ sufficiently small, we have (see \cite[Eq.~(3.49)]{CDK})
\[
\frac{1}{\abs{Q^{+}_\epsilon}} \int_{ Q_{\epsilon}^{+}(X)}   u^k(Y)\,dY =\int_{\Omega} \hat{\cG}^\epsilon_{ik}(y,s,x,t) \psi^i(y)\,dy,
\]
where $\hat{\vec \cG}{}^\epsilon(\cdot,X)$ is the averaged Green's function for \eqref{RPs}.
The condition (H2) implies that $\vec u$ is continuous in $\Omega\times (s,\infty)$, and thus we have
\[
\lim_{\epsilon \to 0} \frac{1}{\abs{Q^{+}_\epsilon}} \int_{Q_{\epsilon}^{+}(X)}   u^k(Y)\,dY = u^k(X).
\]
On the other hand,  by \eqref{eq5.30x} and the counterparts of \eqref{wse.eq2b}, together with the dominated convergence theorem, we get
\[
\lim_{\epsilon \to 0} \int_{\Omega} \hat{\cG}^\epsilon_{ik}(y,s,x,t) \psi^i(y)\,dy=\int_{\Omega} \cG^{*}_{ik}(y,s,x,t) \psi^i(y)\,dy.
\]
Then, the identity \eqref{EP.1b} follows from \eqref{ctn.eq1}.
Finally, we obtain \eqref{eq3.07c} similar to \eqref{mn.eq4h} and get \eqref{eq3.08d} from \eqref{eq3.07c} and the condition (H2).
\hfill\qedsymbol

\subsection{Proof of Theorem \ref{B3.thm2}}
 Let $\psi$ be a bounded Lipschitz function on $\bR^n$ satisfying $
\abs{D\psi}\le M$ a.e. for some $M>0$ be chosen later.
For $s<t$, we define an operator $P^\psi_{s\to t}$ on $L^2(\Omega)^m$ as follows:
For $\vec f \in L^2(\Omega)^m$, fix any $T$ such that $T>t$ and let $\vec u$ be the unique weak solution in $\sV^{1,0}_2(\Omega\times(s,T))^m$ of the problem
\[
\left\{
\begin{array}{ll}
\sL\vec u= 0 &\text{in }\; \Omega\times (s,T)\\
\partial \vec u/\partial \nu+\Theta \vec u=0  &\text{on }\;\partial\Omega \times (s,T)\\
\vec u=e^{-\psi}\vec f &\text{on }\; \Omega\times \set{s}
\end{array}
\right.
\]
and define $P^\psi_{s\to t} \vec f(x) := e^{\psi(x)}\vec u(x,t)$.
Then, by \eqref{EP.1b}, we find
\[
P^\psi_{s\to t}\vec f(x)=
e^{\psi(x)}\int_{\Omega}\vec \cG(x,t,y,s)e^{-\psi(y)}\vec f(y)\,dy.
\]
By usual approximation involving Steklov average (see the proof of Lemma~\ref{B3.lem1}), for a.e. $0<t<s$, we get
\begin{multline*}
\frac{d}{dt} \int_\Omega  \frac{1}{2} e^{2\psi}\abs{\vec u(\cdot,t)}^2\,dx + \int_\Omega e^{2\psi}A^{\alpha\beta}D_\beta \vec u(\cdot,t) \cdot D_\alpha \vec u(\cdot,t)\,dx +\ip{\Theta \vec u(\cdot, t),e^{2\psi} \vec u(\cdot,t)}\\
=\int_\Omega  2 e^{2\psi} D_\alpha \psi A^{\alpha\beta}D_\beta \vec u(\cdot,t) \cdot  \vec u(\cdot,t)\,dx.
\end{multline*}
Therefore, by using \eqref{eq2.09local} and Cauchy's inequality, we get
\[
\frac{d}{dt} \int_\Omega  \frac{1}{2} e^{2\psi}\abs{\vec u(\cdot,t)}^2\,dx + \tilde{\lambda} \int_\Omega e^{2\psi}\abs{D \vec u(\cdot,t)}^2 \,dx
\le \nu M^2 \int_\Omega  e^{2\psi} \abs{\vec u(\cdot,t)}^2 \,dx,
\]
where $\nu=\nu(\lambda, \tilde{\lambda})$.
Therefore,  $I(t):=\int_\Omega e^{2\psi(x)}\abs{\vec u(x,t)}^2\,dx$ satisfies
\[
I'(t) \le  \nu M^2 I(t)\quad\text{for a.e. $0<t<s$},
\]
and thus, we obtain
\begin{equation} \label{eq3.70}
\norm{P^\psi_{s\to t}\vec f}_{L^2(\Omega)} \le e^{\nu M^2 (t-s)}\norm{\vec f}_{L^2(\Omega)}.
\end{equation}
As we pointed out in Remark~\ref{rmk2.5pe}, we may assume that $R_1=\diam\Omega$.
We set $R= \min(\sqrt{t-s}, \diam \Omega)$ and use the condition (H3) to estimate
\begin{align*}
e^{-2\psi(x)}\abs{P^\psi_{s\to t}\vec f(x)}^2  &= \abs{\vec u(x,t)}^2\\
& \le A_1^2 R^{-(n+2)} \int_{t-R^2}^t \int_{\Omega}e^{-2\psi(y)} \abs{P^\psi_{s\to\tau}\vec f(y)}^2\,dy\,d\tau,\quad \forall x \in \Omega.
\end{align*}
Thus, by using \eqref{eq3.70}, we derive
\begin{align*}
\abs{P^\psi_{s\to t}\vec f(x)}^2 &\le A_1^2 R^{-n-2} \int_{t-R^2}^t \int_{\Omega}e^{2MR} \abs{P^\psi_{s\to\tau}\vec f(y)}^2\,dy\,d\tau\\
& \le A_1^2 R^{-n-2} \, e^{2MR} \int_{t-R^2}^t e^{2 \nu M^2 (\tau-s)}\norm{\vec f}_{L^2(\Omega)}^2\,d\tau\\
& \le A_1^2 R^{-n}\,e^{2MR+2 \nu M^2(t-s)}\norm{\vec f}_{L^2(\Omega)}^2,\quad \forall x \in \Omega.
\end{align*}
We have thus obtained the following $L^2\to L^\infty$ estimate for $P^\psi_{s\to t}$:
\[
\norm{P^\psi_{s\to t}\vec f}_{L^\infty( \Omega)} \le  A_1 R^{-n/2}\,e^{MR+\nu M^2 (t-s)} \norm{\vec f}_{L^2(\Omega)},
\]
which corresponds to \cite[Eq.~(5.33)]{CK2}.
The rest of proof is identical to that of \cite[Theorem~3.21]{CK2} and omitted.
\hfill\qedsymbol

\subsection{Proof of Theorem~\ref{thm3.3flu}}

\subsubsection{Proof of Case (i)}
We follow De Giorgi's method.
We remark that an elliptic version of estimate \eqref{30eq2a} is proved in \cite[Lemma~3.1]{LS04}.
For $i=1,2,\ldots$, let
\[
R_i=\frac{R}{2}+\frac{R}{2^i}, \quad k_i=k\left(2-\frac{1}{2^{i-1}}\right) \quad \text{and }\; A_i=\Set{X\in  Q^{-}_{R_i}(X_0)\cap Q: u(X)>k_i},
\]
where $k \ge 0$ to be chosen later, and let $\eta$ be a smooth function in $\bR^{n+1}$ satisfying 
\[
0\le \eta \le1, \;\; \supp \eta \subset Q_{R_i}(X_0), \;\; \eta \equiv 1 \text{ on }Q_{R_{i+1}}(X_0)\;\; \text{and}\;\; \abs{\partial_t\eta}+\abs{D_x \eta}^2\le \frac{4^{i+3}}{R^2}.
\]
Testing with $\eta^2(u-k_{i+1})_+$ in \eqref{eqj.1b}, we get for a.e. $t \in (-T,0)$ that
\begin{multline*}
\frac{d}{dt}\int_\Omega  \frac{1}{2}\eta^2 (u-k_{i+1})_+^2+\int_Q \eta^2 A^{\alpha\beta} D_\beta(u-k_{i+1})_+ D_\alpha(u-k_{i+1})_{+} +\int_{S} \theta \eta^2 u(u-k_{i+1})_+\\
+\int_Q 2\eta (u-k_{i+1})_{+} A^{\alpha\beta} D_\beta(u-k_{i+1})_+ D_\alpha \eta  - \int_Q \eta \partial_t \eta (u-k_{i+1})_+^2=0.
\end{multline*}
Then by using $\theta \ge 0$ and Cauchy's inequality, we get
\begin{equation}	
\label{B1.eq1a}
\tri{(u-k_{i+1})_+}_{Q^{-}_{R_{i+1}}(X_0) \cap Q}^2 \le C\frac{4^i}{R^2}\int_{Q^-_{R_i}(X_0)\cap Q}(u-k_{i+1})_+^2
\le C\frac{4^i}{R^2}\int_{A_i}(u-k_{i})_+^2.
\end{equation}
Denote
\[
Y_i=\int_{A_i}(u-k_i)^2_+
\]
and observe that 
\[
Y_i \ge \int_{A_{i+1}}(u-k_i)_+^2 \ge \int_{A_{i+1}}(k_{i+1}-k_i)^2=\frac{k^2}{4^i}\abs{A_{i+1}},
\]
Then by \eqref{2.eq4d} and \eqref{B1.eq1a}, we obtain
\begin{align*}
Y_{i+1}&\le \norm{(u-k_{i+1})_+}_{\sL_{2(n+2)/n}(A_{i+1})}^2\,\abs{A_{i+1}}^{2/(n+2)} \le C\frac{4^i}{R^2}Y_i \left(\frac{4^i}{k^2}Y_i\right)^{2/(n+2)}\\
&\le C \frac{16^i}{k^{4/(n+2)} R^2} Y_i^{1+2/(n+2)} =: 16^i K Y_i^{1+\sigma},
\end{align*}
where we have set $K=C/k^{4/(n+2)}R^2$ and $\sigma=2/(n+2)$.
Now, we choose
\[
k=C^{(n+2)/4} 2^{(n+2)^2/2}R^{-(n+2)/2}\norm{u}_{\sL_2(Q_R^-(X_0)\cap Q)}
\]
so that we have
\[
Y_1\le \int_{Q_R^-(X_0)\cap Q} \abs{u}^2\,dX =16^{-1/\sigma^2}K^{-1/\sigma}.
\]
Then by \cite[Lemma~15.1, p. 319]{DiB}), we have $Y_i\to 0$ as $i\to \infty$, and thus, we get
\[
u\le 2k \quad \text{on }\;  Q^-_{R/2}(X_0)\cap Q.
\]
By applying the same argument to $-u$, we obtain \eqref{30eq2a}.
\hfill\qedsymbol

\subsubsection{Proof of Case (ii)}
Let $0<R<\min(\sqrt{b-a}, r_0)$ be arbitrary but fixed, where $r_0 \le 16$ is to be determined.
For any $Y\in Q^{-}_{R/2}(X_0)\cap Q$ and $0<\rho<r\le R/16$, we choose a function $\zeta$ such that 
\[
0\le \zeta\le 1, \quad \supp \zeta\subset Q_{(\rho+r)/2}(Y), \quad \zeta\equiv 1 \text{ on } Q_\rho(Y), \quad \abs{\partial_t\zeta}+\abs{D_x\zeta}^2\le 32(r-\rho)^{-2}.
\]
Then $\vec v=\zeta \vec u$ becomes a weak solution of the problem
\[
\left\{
\begin{aligned}
\sL \vec v=\vec \Psi-D_\alpha \vec F_\alpha &\quad \text{in }\, Q,\\
\partial \vec v/\partial \nu=\vec g+n_\alpha \vec F_\alpha &\quad \text{on }\, S,\\
\vec v(\cdot,a)=0 &\quad \text{on }\, \Omega,
\end{aligned} 
\right.
\]
where we set
\begin{equation}	\label{eq5.24sn}
\vec \Psi=\partial_t \zeta \vec u-D_\alpha \zeta  A^{\alpha\beta}D_\beta \vec u, \quad \vec F_\alpha =D_\beta \zeta A^{\alpha \beta}\vec u, \quad \text{and}\quad \vec g=-\zeta \theta \vec u.
\end{equation}
Let us denote
\begin{equation}		\label{eq6.13yy}
\vec c(t)=\int_\Omega \vec\Psi(x,t)\,dx+ \int_{\partial\Omega}  \vec g(x,t)\,dS_x
\end{equation}
and let $\vec V(\cdot, t)$ be a unique (up to a constant) weak solution of the Neumann problem
\[
\left\{
\begin{aligned}
\Delta \vec V(\cdot,t)&=\vec \Psi(\cdot,t) - \abs{\Omega}^{-1} \vec c(t)\quad \text{in }\; \Omega,\\
\partial \vec V(\cdot,t)/\partial n&=- \vec g(\cdot,t) \quad \text{on }\; \partial\Omega.
\end{aligned}
\right.
\]
We assume that $\vec V$ is constructed in such a way that it is measurable in $t$.
Then, by \cite[Corollary~9.3]{FMM} together with the embedding theorems of Sobolev and Besov spaces (see e.g., \cite{BL}), we have the following estimate for $D\vec V(\cdot,t)$
\[
\norm{D \vec V(\cdot,t)}_{L^p(\Omega)} \le C \left(\norm{\vec \Psi(\cdot,t)}_{L^{pn/(p+n)}(\Omega)}+\norm{\vec g(\cdot,t)}_{L^{p(n-1)/n}(\partial\Omega)}\right),
\]
where $C= C(n,m,p,\Omega)$, and thus, we get
\begin{equation}		\label{19eq1c}
\norm{D\vec V}_{\sL_{p,q}(Q)}\le C\left(\norm{\vec \Psi}_{\sL_{np/(n+p),q}(Q)}+\norm{\vec g}_{\sL_{p(n-1)/n,q}(S)}\right).
\end{equation}
Notice that if we set $\vec h_\alpha=D_\alpha\vec V-\vec F_\alpha$, then $\vec v$ becomes a weak solution of the problem
\[
\left\{
\begin{aligned}
\sL \vec v = D_\alpha \vec h_\alpha +\abs{\Omega}^{-1}\vec c(t) &\quad \text{in }\;Q,\\
\partial \vec v/\partial \nu + n_\alpha \vec h_\alpha =  0&\quad \text{on }\;S,\\
\vec v(\cdot, a)=0&\quad \text{on }\; \Omega.
\end{aligned}
\right.
\]
Note that with $\vec \Psi$ and $\vec g$ as given in \eqref{eq5.24sn}, the function $\vec c(t)$  in \eqref{eq6.13yy} satisfies $\vec c(a)=0$ and the identity
\[
-\int_{a}^b \left(\int_\Omega \vec v(x,t)\,dx\right)\cdot \vec \phi'(t)\,dt=\int_{a}^b \vec c(t)\cdot \vec \phi(t)\,dt,\quad \forall \vec \phi(t)\in C^\infty_c(-T,0)^m
\]
so that $\vec c(t)$ has the weak derivative $\vec c'(t)=\int_\Omega \vec v(x,t)\,dx$.
Therefore, we have
\begin{equation}		\label{19eq1b}
\sup_{a<t<b}\abs{\vec c(t)}\le \int_Q \abs{\vec v}\,dX.
\end{equation}
We then apply \cite[Theorem 8.1]{DKd11}, \eqref{19eq1c}, and \eqref{19eq1b} to conclude that 
\begin{equation}		\label{19eq1e}
\norm{D\vec v}_{\sL_{p,q}(Q)}
\le C\left(\norm{\vec \Psi}_{\sL_{np/(n+p),q}(Q)}+\norm{\vec F_{\cdot}}_{\sL_{p,q}(Q)}+\norm{\vec g}_{\sL_{(n-1)p/n,q}(S)}+\norm{\vec v}_{\sL_1(Q)}\right),
\end{equation}
where $C$ depends only on $n, m, \lambda,Q, p, q$ and $A^{\alpha\beta}$.

Hereafter in the proof, we shall use the following notation
\begin{equation}		\label{lcd}
\begin{aligned}
U_r(X)=Q_r(X)\cap Q,&\quad  S_r(X)=Q_r(X)\cap S, \\ 
U^{-}_r(X)=Q^{-}_r(X)\cap Q,&\quad  S_r^{-}(X)=Q^{-}_r(X)\cap S, \\ \Omega_r(x)=B_r(x) \cap \Omega,&\quad \Sigma_r(x)=B_r(x)\cap \partial \Omega,
\end{aligned}
\end{equation}
and shall drop the reference to $X$ or $x$ if it is clear from the context.
We recall the following version of localized Sobolev inequality:
For $1\le p<n$, there exists $C'=C'(n,p,\Omega)$ such that for any $u\in W^{1,p}(\Omega_{r}(x))$ with $x\in\Omega$ and $0<r<\diam \Omega$, we have
\begin{multline}		\label{eq5.31cr}
\norm{u}_{L^{(n-1)p/(n-p)}(\Sigma_{s})} + \norm{u}_{L^{np/(n-p)}(\Omega_s)}\\
\le C' \left((r-s)^{-1} \norm{u}_{L^p(\Omega_{r})}+\norm{Du}_{L^p(\Omega_{r})}\right),\quad \forall s \in(0,r).
\end{multline}

By the properties of $\zeta$, H\"older's inequality, and \eqref{eq5.31cr} with $np/(n+p)$ and $(\rho+r)/2$ in place of $p$ and $s$, we get from \eqref{eq5.24sn} that
\begin{gather*}
\norm{\vec \Psi}_{\sL_{np/(n+p),q}(Q)} \le  Cr (r-\rho)^{-2} \norm{\vec u}_{\sL_{p,q}(U_r)} + C (r-\rho)^{-1} \norm{D\vec u}_{\sL_{np/(n+p),q}(U_r)},\\
\norm{\vec F_{\cdot}}_{\sL_{p,q}(Q)} \le C(r-\rho)^{-1} \norm{\vec u}_{\sL_{p,q}(U_r)},\\
\norm{\vec g}_{\sL_{(n-1)p/n,q}(S)} \le C \norm{\vec u}_{\sL_{(n-1)p/n,q}(S_{(\rho+r)/2})} \le C C' \left((r-\rho)^{-1}\norm{\vec u}_{\sL_{np/(n+p),q}(U_{r})}+ \norm{D\vec u}_{\sL_{np/(n+p),q}(U_{r})}\right),\\
\norm{\vec v}_{\sL_1(Q)} \le C r^{n+2-n/p-2/q} \norm{\vec u}_{\sL_{p,q}(U_r)}.
\end{gather*}
Therefore, we get from \eqref{19eq1e} that (recall that $r \le r_0/16 \le 1$)
\begin{equation}		\label{13-eq1a}
\norm{D\vec u}_{\sL_{p,q}(U_\rho)}\le  C''_p \frac{r}{(r-\rho)^2}\left(\norm{\vec u}_{\sL_{p,q}(U_{r})}+\norm{D\vec u}_{\sL_{np/(n+p),q}(U_{r})}\right)
\end{equation}
where $C''_p=C''_p(n, m, \lambda,Q, \norm{\theta}_\infty, p,q, A^{\alpha\beta})$.

The proof of the following lemma will be given in Section~\ref{app}.

\begin{lemma}		\label{lem2.3pe}
Let $\Omega$ be a Lipschitz domain and let $\vec u$ be a weak solution of
\[
\left\{
\begin{aligned}
\vec u_t-D_\alpha(A^{\alpha\beta}D_\beta \vec u)=0 &\quad \text{in }\, Q=\Omega\times (a,b),\\
\partial \vec u/\partial \nu+\theta\vec u=0 &\quad \text{on }\, S=\partial \Omega\times (a,b),
\end{aligned}
\right.
\]
where $\theta \in \sL_\infty(S)^{m \times m}$.
Then, there exist constants $q_0>2$ and $r_0>0$ such that for any $x_0\in \overline \Omega$ and $0<r<\min(\sqrt{b-a}, r_0)$, we have
\begin{multline}		\label{drowsy}
r^{-(n+2)/q_0}\left(\int_{U^{-}_{r/2}(X_0)}\abs{D\vec u}^{q_0}+\abs{\vec u}^{q_0}\,dX\right)^{1/q_0}\le C r^{-(n+2)/2}\left(\int_{U^{-}_r(X_0)}\abs{D\vec u}^2+\abs{\vec u}^2\,dX\right)^{1/2}\\
+Cr^{-(n+2)/2} \norm{\vec u}_{\sL_{2,\infty}(U^{-}_r(X_0))};\quad X_0=(x_0,b),
\end{multline}
where $C>0$ is a constant depending only on $n, m, \lambda, \Omega$ and $\norm{\theta}_{\infty}$.
\end{lemma}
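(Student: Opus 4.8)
The statement is a Meyers--Gehring type higher-integrability estimate for solutions of the Robin problem \eqref{eqj.1b}, valid up to the boundary. The plan is to deduce it from a weak reverse H\"older inequality on parabolic cylinders together with a self-improving (Gehring-type) lemma; the main analytic input, namely the Caccioppoli-type reverse inequality \eqref{13-eq1a}, in which the Robin boundary condition has already been absorbed into the right-hand side through the $W^{1,p}$-theory for parabolic systems with conormal data, is at our disposal. Concretely, I would (a) upgrade \eqref{13-eq1a}, with the help of the localized Sobolev/trace inequality \eqref{eq5.31cr}, the parabolic Sobolev embedding \eqref{2.eq4d}, and the energy (Caccioppoli) inequality for \eqref{eqj.1b}, into a genuine weak reverse H\"older inequality for the scalar function $g:=\abs{D\vec u}^{2}+\abs{\vec u}^{2}$ over cylinders $U_{\rho}(Y)\subset U^{-}_{R}(X_0)$, interior as well as boundary; and then (b) invoke a self-improving lemma for the parabolic metric $d$ of Section~\ref{B} to obtain exponents $q_0>2$ and $r_0>0$ and the scale-invariant bound \eqref{drowsy}.

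For step (a) I would first record the Caccioppoli inequality for \eqref{eqj.1b}: testing the equation with $\zeta^{2}\vec u$ for a space-time cut-off $\zeta$ supported in $U^{-}_{r}(X_0)$, using strong ellipticity \eqref{B2.eq1}, the bound $\theta\in\sL_\infty(S)$, and the trace part of \eqref{eq5.31cr} with $p=2$ and a small absorbing parameter to control $\int_{S}\theta\,\zeta^{2}\abs{\vec u}^{2}$, one gets, for concentric cylinders $U^{-}_{\rho}\subset U^{-}_{r}$,
\[
\tri{\vec u}_{U^{-}_{\rho}}^{2}\le\frac{C}{(r-\rho)^{2}}\int_{U^{-}_{r}}\abs{\vec u}^{2}\,dX ,
\]
and together with \eqref{2.eq4d} also $\norm{\vec u}_{\sL_{2(n+2)/n}(U^{-}_{\rho})}\le C(r-\rho)^{-1}\norm{\vec u}_{\sL_{2}(U^{-}_{r})}$; this is the step responsible for the term $\norm{\vec u}_{\sL_{2,\infty}(U^{-}_{r}(X_0))}$ on the right of \eqref{drowsy}. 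I would then feed \eqref{13-eq1a} into this: choosing the exponents so that the gradient term on its right-hand side carries a spatial integrability exponent of the form $2\vartheta$ with $\vartheta=\tfrac{n}{n+2}<1$, bounding the lower-order $\norm{\vec u}$-term on its right-hand side by the previous estimate via H\"older's inequality, and applying a standard iteration (filling) lemma over a dyadic chain of radii between $R/2$ and $R$ to remove the factor $r(r-\rho)^{-2}$, one arrives at a weak reverse H\"older inequality of the form
\[
\left(\fint_{U_{\rho}(Y)}g\,dX\right)^{1/2}\le C\left(\fint_{U_{2\rho}(Y)}g^{\vartheta}\,dX\right)^{1/(2\vartheta)}+C\left(\fint_{U_{2\rho}(Y)}\abs{\vec u}^{2}\,dX\right)^{1/2}
\]
for all cylinders with $U_{2\rho}(Y)\subset U^{-}_{R}(X_0)$. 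For cylinders lying in the interior of $Q$ the $\theta$-term is absent and this is the classical parabolic reverse H\"older inequality; for the boundary cylinders it is precisely \eqref{13-eq1a} and \eqref{eq5.31cr} that make the estimate go through, with constants depending only on $n,m,\lambda,\Omega$, and $\norm{\theta}_{\infty}$, uniformly in the base point of $\overline\Omega$ even though $\Omega$ is merely Lipschitz.

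Step (b) is then routine: the Gehring lemma applied to the above reverse H\"older inequality (the inhomogeneous term $\abs{\vec u}^{2}$ lying, by the Caccioppoli--Sobolev estimate, in $\sL_{(n+2)/n}$, hence with better than the base integrability) yields an exponent $q_0>2$ and a radius $r_0>0$ with $g\in\sL_{q_0/2}(U^{-}_{R/2}(X_0))$ and the estimate \eqref{drowsy}; the adjoint statement follows identically after reversing time. I expect the only real difficulty to be technical: the bookkeeping needed to pass from the mixed-norm inequality \eqref{13-eq1a} to a genuine \emph{pure}-$\sL_q$ weak reverse H\"older inequality on parabolic cylinders to which a standard Gehring lemma applies, and to do this uniformly for the boundary cylinders of a Lipschitz domain --- but this is exactly what \eqref{13-eq1a} and \eqref{eq5.31cr} are designed to deliver, so no essentially new obstacle is expected there.
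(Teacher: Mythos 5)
The proposal has a fundamental gap: it proposes to derive the reverse H\"older inequality from \eqref{13-eq1a}, but \eqref{13-eq1a} is \emph{not} available in the setting of Lemma~\ref{lem2.3pe}. The inequality \eqref{13-eq1a} is established in the proof of Case~(ii) of Theorem~\ref{thm3.3flu} using \cite[Theorem~8.1]{DKd11} (an $L^p$ regularity theorem requiring coefficients in a BMO-type class) together with the Neumann function estimates from \cite{FMM}; the constant $C_p''$ there explicitly depends on the coefficient matrices $A^{\alpha\beta}$ through that $L^p$ theory, which presupposes $VMO_x$ regularity. Lemma~\ref{lem2.3pe}, by contrast, is stated for \emph{arbitrary} bounded measurable coefficients satisfying only the uniform ellipticity \eqref{B2.eq1}, and indeed it must be: the whole point of the argument in Case~(ii) is to combine Lemma~\ref{lem2.3pe} (a pure energy-method estimate) with \eqref{13-eq1a} (the $VMO_x$ input) to produce the boundary local boundedness. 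Using \eqref{13-eq1a} as an input to Lemma~\ref{lem2.3pe} both smuggles in an unstated coefficient hypothesis and runs against the intended logical structure.

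There is a second, related structural problem. You propose a reverse H\"older inequality of the form
\[
\left(\fint_{U_{\rho}(Y)}g\right)^{1/2}\le C\left(\fint_{U_{2\rho}(Y)}g^{\vartheta}\right)^{1/(2\vartheta)}+C\left(\fint_{U_{2\rho}(Y)}\abs{\vec u}^{2}\right)^{1/2},
\]
with $g=\abs{D\vec u}^2+\abs{\vec u}^2$, and then treat the last term as an inhomogeneous datum of higher integrability. But $\abs{\vec u}^2\le g$ pointwise, so the last term is comparable to $\bigl(\fint_{U_{2\rho}}g\bigr)^{1/2}$ itself and cannot legitimately be fed into a Gehring-with-data argument; the "better integrability" you invoke is precisely what you are trying to prove. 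The paper avoids this by establishing the more delicate inequality \eqref{eq6.11sleepy}, which has an oscillation term $\rho^{-2}\abs{\vec u-\vec u_\rho}^2$ on both sides, and then applying Arkhipova's variant of Gehring's lemma (Theorem~\ref{thm_Ark}), whose additive function $\psi_\rho$ is precisely designed to absorb such a mean-oscillation term and whose $m_\psi(\bQ)$-quantity produces, after scaling, the $\norm{\vec u}_{\sL_{2,\infty}}$ term in \eqref{drowsy}. Your proposal never isolates this oscillation structure, so it has no mechanism for the $\sL_{2,\infty}$ term to appear. The paper's actual proof is self-contained: it tests the equation with cutoffs of the form $\zeta^2(x)\tau^2(t)(\vec u-\tilde{\vec u}_r(t))$, controls the boundary term via the trace embedding \eqref{eq2.16wine} and the localized trace Sobolev inequality \eqref{eq5.31cr}, invokes a Poincar\'e-type bound (Lemma~\ref{13-lem1}) and Young's inequality to reach \eqref{eq6.11sleepy}, and only then applies Theorem~\ref{thm_Ark}. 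No $L^p$ theory for the system and no coefficient regularity is used, which is exactly what the statement demands.
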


Now, take $r_0>0$ and $q=q_0>2$ as given in Lemma~\ref{lem2.3pe}.
By replacing $r_0$ by $\min(r_0,8)$, we may assume that $r_0\le 8$.
We choose $p>q$ such that
\[
\mu:=1-2/q-n/p>0.
\]
We fix $k$ to be the smallest integer satisfying $k\ge n(1/2-1/p)$ and set 
\[
p_i=\frac{np}{n+pi} \quad \text{and}\quad r_i=\rho+\frac{(r-\rho)}{k} i, \qquad i=0,1,\cdots, k
\]
and 
\[
\tilde{C}= \max_{0\le i \le k} C''_{p_i},\quad \text{where $C''_p$ is as appears  in \eqref{13-eq1a}}.
\]
Then, we apply \eqref{13-eq1a} iteratively (set $\rho=r_{i}$, $r=r_{i+1}$, and $p=p_i$) and to get 
\[
\norm{D\vec u}_{\sL_{p,q}(U_{\rho})}\le \sum_{i=1}^k \tilde{C}^i r^i\left(\frac{k}{r-\rho}\right)^{2i}\norm{\vec u}_{\sL_{p_{i-1},q}(U_{r_i})}+\tilde{C}^k r^k \left(\frac{k}{r-\rho}\right)^{2k}\norm{D\vec u}_{\sL_{p_k,q}(U_{r})}.
\]
Notice that $1<p_k\le 2<q$.
By using H\"older's inequality, we then obtain
\begin{align}
\nonumber
\rho^{-n/2-\mu}\norm{D\vec u}_{\sL_2(U_\rho)}
&\le \abs{B_1}^{1/2-1/p}\norm{D\vec u}_{\sL_{p,q}(U_\rho)}\\
\label{5.56cacao}
& \le C\left(\frac{r}{r-\rho}\right)^{2k} \left( r^{-1}\norm{\vec u}_{\sL_{p,q}(U_{r})}
+ r^{1-\mu-(n+2)/q}\norm{D\vec u}_{\sL_{q}(U_{r})} \right).
\end{align}
Note that if we denote $Y_0=(y, \hat{s})$ with $\hat{s}=\min(s+r^2,b)$, then we have
\[
U_r=U_r(Y) \subset U_{2r}^{-}(Y_0),\quad U_{8r}^-(Y_0)\subset U_R^-(X_0).
\]
Therefore, if we take $\rho<r/2$ in \eqref{5.56cacao}, then by Lemma \ref{lem2.3pe} followed by Caccioppoli type inequality (cf. \eqref{app.eq1c} -- \eqref{eq5.13hu}), we have
\begin{equation}		\label{eq5.34hungry}
\int_{U_\rho}\abs{D\vec u}^2\le C \rho^{n+2\mu} \left( \frac{1}{r^2} \norm{\vec u}_{\sL_{p,q}(U_r)}^2+\frac{1}{r^{n+2+2\mu}} \int_{U_{8r}^{-}(Y_0)}\abs{\vec u}^2 \right),
\end{equation}
where we again used that $r \le r_0/16\le 1$.
\begin{lemma}		\label{13-lem1}
Under the same hypothesis of Lemma~\ref{lem2.3pe}, we have 
\[
\int_{U^{-}_{r}(X_0)}\abs{\vec u- \vec u_{X_0,r}}^2\,dX\le C r^2\left(\int_{U^{-}_r(X_0)}\abs{\vec u}^2+\abs{D\vec u}^2\,dX\right);\quad \vec u_{X_0,r}=\fint_{U^{-}_{r}(X_0)} \vec u,
\]
where $C>0$ is a constant depending only on $n, m, \lambda,\Omega$ and $\norm{\theta}_\infty$.
\end{lemma}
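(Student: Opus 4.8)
The plan is to view the claim as a parabolic Poincar\'e inequality on the truncated cylinder $U^{-}_r(X_0)$ and to split the oscillation of $\vec u$ into a spatial part, handled by a slicewise Poincar\'e inequality on the Lipschitz pieces $\Omega_r(x_0)$, and a part measuring the oscillation in $t$ of a weighted spatial average of $\vec u$, handled by testing the equation. Write $X_0=(x_0,b)$, $\tau_1:=\max\{a,\,b-r^2\}$, $\Omega_r=B_r(x_0)\cap\Omega$ and $\Sigma_r=B_r(x_0)\cap\partial\Omega$, so that $U^{-}_r(X_0)=\Omega_r\times(\tau_1,b)$ with $b-\tau_1\le r^2$. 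Since $\vec u_{X_0,r}$ minimizes $\vec c\mapsto\int_{U^{-}_r(X_0)}\abs{\vec u-\vec c}^2$, it suffices to produce one constant vector $\vec c$ with $\int_{U^{-}_r(X_0)}\abs{\vec u-\vec c}^2\le Cr^2\int_{U^{-}_r(X_0)}(\abs{\vec u}^2+\abs{D\vec u}^2)$. Fix a cutoff $\eta=\eta(x)$ with $0\le\eta\le1$, $\eta\equiv1$ on $B_{r/2}(x_0)$, $\supp\eta$ a compact subset of $B_r(x_0)$, and $\abs{D\eta}\le C/r$; put
\[
\vec u_\eta(t):=\Big(\int_\Omega\eta^2\Big)^{-1}\int_\Omega\eta^2\,\vec u(x,t)\,dx,\qquad \vec c:=\fint_{\tau_1}^b\vec u_\eta(t)\,dt,
\]
and decompose $\vec u-\vec c=(\vec u(x,t)-\vec u_\eta(t))+(\vec u_\eta(t)-\vec c)$.

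For the spatial part, recall that for a Lipschitz domain there are $r_0=r_0(\Omega)>0$ and $C_0=C_0(n,\Omega)$ such that for every $x_0\in\overline\Omega$ and $0<r<r_0$ the set $\Omega_r(x_0)$ is connected with uniformly controlled Lipschitz character, whence $\int_{\Omega_r}\abs{w-\fint_{\Omega_r}w}^2\le C_0r^2\int_{\Omega_r}\abs{Dw}^2$; moreover $\int_\Omega\eta^2\ge\abs{\Omega_{r/2}(x_0)}\ge c\,r^n$ uniformly. Applying this with $w=\vec u(\cdot,t)$ slicewise, comparing $\vec u_\eta(t)$ with $\fint_{\Omega_r}\vec u(\cdot,t)$ (their difference being $\le Cr\,(\fint_{\Omega_r}\abs{D_x\vec u(\cdot,t)}^2)^{1/2}$), and integrating in $t\in(\tau_1,b)$, we obtain $\int_{U^{-}_r(X_0)}\abs{\vec u(x,t)-\vec u_\eta(t)}^2\le Cr^2\int_{U^{-}_r(X_0)}\abs{D_x\vec u}^2$.

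For the oscillation of $\vec u_\eta$ in $t$, testing the weak formulation of \eqref{eqj.1b} with $\eta(x)^2\rho(t)\vec e_i$, $\rho\in C^\infty_c(a,b)$, shows that $t\mapsto\int_\Omega\eta^2\vec u(\cdot,t)$ lies in $W^{1,1}(a,b)$ with derivative $-\int_\Omega A^{\alpha\beta}D_\beta\vec u\cdot D_\alpha(\eta^2)-\langle\Theta\vec u,\eta^2\rangle$, where $\Theta=M_\theta$ gives $\langle\Theta\vec u,\eta^2\rangle=\int_{\Sigma_r}\theta\,\vec u\cdot\eta^2\,dS$ (this is justified by the same Steklov-averaging argument as in the proof of Lemma~\ref{B3.lem1}). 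Hence, for $\tau_1<t_1<t_2<b$,
\[
\Big(\int_\Omega\eta^2\Big)\big(\vec u_\eta(t_2)-\vec u_\eta(t_1)\big)=-\int_{t_1}^{t_2}\!\!\int_\Omega A^{\alpha\beta}D_\beta\vec u\cdot D_\alpha(\eta^2)\,dx\,dt-\int_{t_1}^{t_2}\!\!\int_{\Sigma_r}\theta\,\vec u\cdot\eta^2\,dS\,dt .
\]
The first term is bounded, using $\abs{D\eta}\le C/r$ and Cauchy--Schwarz, by $\tfrac{C}{r}r^{n/2}\int_{t_1}^{t_2}(\int_{\Omega_r}\abs{D_x\vec u}^2)^{1/2}dt$; the boundary term is bounded, using $\norm{\theta}_\infty<\infty$ and a scaled localized trace inequality on $\Omega_r(x_0)$ of the type \eqref{eq5.31cr} (with $p=2n/(n+1)$), which gives $\int_{\Sigma_r}\abs{\vec u(\cdot,t)}^2\le C(r^{-1}\int_{\Omega_r}\abs{\vec u(\cdot,t)}^2+r\int_{\Omega_r}\abs{D_x\vec u(\cdot,t)}^2)$, by $C\norm{\theta}_\infty r^{(n-1)/2}\int_{t_1}^{t_2}(r^{-1}\int_{\Omega_r}\abs{\vec u}^2+r\int_{\Omega_r}\abs{D_x\vec u}^2)^{1/2}dt$. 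Dividing by $\int_\Omega\eta^2\sim r^n$, squaring, integrating $(t_1,t_2)$ over $(\tau_1,b)^2$, and using $b-\tau_1\le r^2$, $\abs{\Omega_r}\sim r^n$, and $\int_{\tau_1}^b\abs{\vec u_\eta(t)-\vec c}^2dt\le\frac{1}{b-\tau_1}\iint\abs{\vec u_\eta(t_1)-\vec u_\eta(t_2)}^2$, we arrive at $\int_{U^{-}_r(X_0)}\abs{\vec u_\eta(t)-\vec c}^2\le Cr^2\int_{U^{-}_r(X_0)}(\abs{\vec u}^2+\abs{D_x\vec u}^2)$. Adding this to the spatial estimate proves the lemma, with $C=C(n,m,\lambda,\Omega,\norm{\theta}_\infty)$.

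The main obstacle is the $t$-direction: the weak solution has $\vec u_t$ only in a negative-order space, so the oscillation of $\vec u_\eta(t)$ cannot be estimated ``pointwise'' and must be extracted from the weak formulation as above, with the Robin term being precisely what forces the appearance of $\int\abs{\vec u}^2$ on the right-hand side. A secondary point is uniformity: every constant has to be independent of $x_0\in\overline\Omega$ and of $r\in(0,r_0)$, which is why $\eta$ must be cut off inside $B_r(x_0)$ (no larger ball is available in the right-hand side) and why one invokes the uniform Lipschitz character of $\Omega_r(x_0)$ at small scales for both the Poincar\'e and the trace inequalities. When $B_r(x_0)\subset\Omega$ the boundary term disappears and the statement reduces to the standard interior parabolic Poincar\'e inequality.
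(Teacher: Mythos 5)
Your proof is correct and follows essentially the same route as the paper's: you compare $\vec u$ with a weighted spatial average $\vec u_\eta(t)$ (the paper uses the weight $\chi$, you use $\eta^2$, which is immaterial), handle the spatial oscillation with a slicewise Poincar\'e inequality, and extract the time-oscillation of $\vec u_\eta$ from the weak formulation with a test function supported in $B_r(x_0)$, estimating the resulting boundary term by the trace theorem; the minimizing property of the mean then finishes the argument. The only cosmetic difference is that you express the time-oscillation as an absolutely continuous derivative of $t\mapsto\int\eta^2\vec u$, while the paper tests with $\chi\,1_{[s,t]}$ times a constant vector to produce the finite difference directly — these are equivalent.
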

The proof of Lemma~\ref{13-lem1} will be given Section~\ref{app}.
By Lemma~\ref{13-lem1} combined with \eqref{eq5.34hungry} and H\"older's inequality, we obtain
\begin{align*}
\int_{U_{\rho}^{-}(Y)}\abs{\vec u- \vec u_{Y,\rho}}^2 
&\le C\rho^2 \int_{U_\rho^-(Y)}\abs{\vec u}^2+C \rho^{n+2+2\mu} \left( \frac{1}{r^2} \norm{\vec u}_{\sL_{p,q}(U_r(Y))}^2+\frac{1}{r^{n+2+2\mu}} \int_{U_{8r}^-(Y_0)} \abs{\vec u}^2 \right)\\
&\le C \rho^{n+2+2\mu} \left( (1+r^{-2})\norm{\vec u}_{\sL_{p,q}(U_{r}(Y))}^2+r^{-(n+2+2\mu)} \int_{U_{8r}^-(Y_0)}\abs{\vec u}^2\right)\\
&\le C \rho^{n+2+2\mu} \left( r^{-2}\norm{\vec u}_{\sL_{p,q}(U_{2r}^-(Y_0))}^2+r^{-(n+2+2\mu)} \int_{U_{8r}^-(Y_0)}\abs{\vec u}^2\right).
\end{align*}
Now, we take $r=R/16$ in the above.
Then, from the above inequality we conclude that for any $Y\in U_{R/2}^-(X_0)$ and $0<\rho<r/2=R/32$, we have
\[
\int_{U_{\rho}^-(Y)}\abs {\vec u-\vec u_{Y,\rho}}^2\le C \rho^{n+2+2\mu} \left(R^{-2} \norm{\vec u}_{\sL_{p,q}(U_{R}^-(X_0))}^2+R^{-(n+2+2\mu)} \norm{\vec u}_{\sL_2(U_R^-(X_0))}^2 \right).
\]
On the other hand, it is easy to see that for $R/32 \le \rho \le  R/2$, we have
\[
\int_{U^{-}_{\rho}(Y)}\abs{\vec u- \vec u_{Y,\rho}}^2 \le C\int_{U^{-}_\rho(Y)}\abs{\vec u}^2
\le C\int_{U^{-}_R(X_0)}\abs{\vec u}^2 \le  C\left(\frac{\rho}{R}\right)^{n+2+2\mu}\int_{U^{-}_R(X_0)}\abs{\vec u}^2,
\]
and thus, for any $Y\in U^{-}_{R/2}(X_0)$ and $0<\rho\le R/2$, we have
\[
\int_{U^{-}_{\rho}(Y)} \abs{\vec u-\vec u_{Y,\rho}}^2 \le C \rho^{n+2+2\mu}\left(R^{-2}\norm{\vec u}^2_{\sL_{p,q}(U^{-}_{R}(X_0))}+R^{-(n+2+2\mu)} \norm{\vec u}_{\sL_2(U^{-}_R(X_0))}^2 \right).
\]
By Campanato's characterization of H\"older continuity, we find from the above inequality that
\[
[\vec u]_{\mu,\mu/2; U^{-}_{R/2}(X_0)}\le C\left(R^{-1}\norm{\vec u}_{\sL_{p,q}(U^{-}_{R}(X_0))}+R^{-(n+2+2\mu)/2} \norm{\vec u}_{\sL_2(U^{-}_R(X_0))} \right)=:CH.
\]
Then, for any $Y$ and $Y'$ in $U^{-}_{R/2}(X_0)$, we have 
\[
\abs{\vec u(Y)}\le \abs{\vec u(Y')}+\abs{\vec u(Y)-\vec u(Y')}\le \abs{\vec u(Y')}+CR^\mu H.
\]
By taking average over $Y'\in U^{-}_{R/2}(X_0)$ in the above, we get
\begin{align*}
\sup_{Y\in U^{-}_{R/2}(X_0)}\abs{\vec u(Y)}  &\le \fint_{U^{-}_{R/2}(X_0)}\abs{\vec u(Y')}\,dY'+CR^\mu H\\
&\le CR^{\mu-1}\norm{\vec u}_{\sL_{p,q}(U^{-}_R(X_0))}+C R^{-(n+2)/2} \norm{\vec u}_{\sL_2(U^{-}_R(X_0))}.
\end{align*}
Note that by H\"older's inequality, we have
\[
\norm{\vec u}_{\sL_{p,q}(U^{-}_R(X_0))} \le R^{2/q-1/p} \norm{\vec u}_{\sL_\infty(U^{-}_R(X_0))}^{(p-2)/p}
\norm{\vec u}_{\sL_2(U^{-}_R(X_0))}^{1/p}.
\]
Therefore, by combining the above two inequalities, we get
\begin{align*}
\norm{\vec u}_{\sL_\infty(U^{-}_{R/2}(X_0))} &\le CR^{-(n+2)/p}\norm{\vec u}_{\sL_\infty(U^{-}_R(X_0))}^{(p-2)/p} \norm{\vec u}_{\sL_2(U^{-}_R(X_0))}^{2/p}+C R^{-(n+2)/2} \norm{\vec u}_{\sL_2(U^{-}_R(X_0))}\\
&\le \epsilon \norm{\vec u}_{\sL_\infty(U^{-}_R(X_0))}+ C(\epsilon) R^{-(n+2)/2} \norm{\vec u}_{\sL_2(U^{-}_R(X_0))},
\end{align*}
where we used Young's inequality in the second inequality.
By a standard iteration method (see \cite[Lemma 5.1]{Gi93}), we derive \eqref{30eq2a} from the above inequality.
The proof is complete.
\hfill\qedsymbol

\subsubsection{Proof of Case (iii)}
The following lemma is an elliptic version of Lemma~\ref{lem2.3pe}, the proof of which will be given in Section~\ref{app}.

\begin{lemma}		\label{er.lem2}
Let $\Omega$ be a Lipschitz domain and let $\vec u$ be a weak solution of
\begin{equation}		\label{er.eq1}
\left\{
\begin{aligned}		
-D_\alpha(A^{\alpha\beta}(x)D_\beta \vec u)=\vec f &\quad \text{in }\, \Omega,\\
\partial \vec u/\partial \nu+\theta(x) \vec u=0 &\quad \text{on }\, \partial \Omega,
\end{aligned}
\right.
\end{equation}
where $\theta \in \sL_\infty(\partial\Omega)^{m\times m}$ and $\vec f \in L^n(\Omega)$.
Then, there exist constants $p_0>2$ and $r_0>0$ such that for all $x_0\in \overline \Omega$ and $0<r<r_0$, we have 
\begin{multline}		\label{a1}
r^{-n/p_0}\left(\int_{\Omega_{r/2}(x_0)}\abs{D\vec u}^{p_0}+\abs{\vec u}^{p_0}\,dx\right)^{1/p_0}\le C r^{-n/2}\left(\int_{\Omega_{r}(x_0)}\abs{D\vec u}^2+\abs{\vec u}^2\,dx\right)^{1/2}\\+C \norm{\vec f}_{L^n(\Omega)},
\end{multline}
where $C>0$ is a constant depending only on $m, n, \lambda, \Omega$, and $\norm{\theta}_\infty$.
\end{lemma}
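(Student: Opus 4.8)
The plan is to establish a Meyers-type reverse H\"older inequality on small balls---both interior and boundary ones---and then invoke the self-improvement lemma of Gehring in the form with increasing supports (cf. Giaquinta--Modica). Put $g:=\abs{D\vec u}+\abs{\vec u}$ and write $\Omega_\rho(z)=B_\rho(z)\cap\Omega$ and $\Sigma_\rho(z)=B_\rho(z)\cap\partial\Omega$, dropping the reference to $z$ when convenient. The target is the scale-invariant estimate
\begin{equation*}
\left(\fint_{\Omega_\rho(z)}g^2\,dx\right)^{1/2}\le C\left(\fint_{\Omega_{2\rho}(z)}g^{2n/(n+2)}\,dx\right)^{(n+2)/(2n)}+C\norm{\vec f}_{L^n(\Omega)},\qquad z\in\overline\Omega,\ 0<2\rho<r_0,
\end{equation*}
with $r_0>0$ small, depending only on $n,m,\lambda,\Omega$ and $\norm{\theta}_\infty$; granting this, the inequality \eqref{a1} with some $p_0=2+\delta>2$ follows at once from Gehring's lemma. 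That lemma applies uniformly in $z$ and $\rho$ because at small scales a Lipschitz boundary is bi-Lipschitz flat, so the Sobolev--Poincar\'e inequality, the trace inequality, and the doubling property hold on the relative balls $\Omega_\rho(z)$ with constants independent of $z$ and $\rho$.

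For interior balls $B_{2\rho}(z)\Subset\Omega$ the reverse H\"older inequality is classical: one tests \eqref{er.eq1} with $\zeta^2(\vec u-\vec u_{B_{2\rho}})$, where $\zeta$ is a standard cutoff and $\vec u_{B_{2\rho}}=\fint_{B_{2\rho}}\vec u$, and then strong ellipticity, Young's inequality, Sobolev--Poincar\'e, and a hole-filling iteration give the claim, the datum $\vec f$ entering only through H\"older's and Poincar\'e's inequalities and leaving behind the scale-invariant contribution $\norm{\vec f}_{L^n(\Omega)}$. The boundary balls are where the work lies. Here I would again test with $\vec\phi=\zeta^2(\vec u-\vec u_{\Omega_{2\rho}(z)})$: subtracting the \emph{full} average is essential, for it is precisely what lets Sobolev--Poincar\'e lower the exponent on the right to $2n/(n+2)$. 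The Robin condition then produces the additional term $\int_{\partial\Omega}\theta\,\vec u\cdot\zeta^2(\vec u-\vec u_{\Omega_{2\rho}})\,dS$, which, because $\theta$ multiplies $\vec u$ and not $\vec u-\vec u_{\Omega_{2\rho}}$, cannot be killed by the choice of constant. Bounding $\abs{\theta}\le\norm{\theta}_\infty$ and combining the scaled trace inequality $\int_{\Sigma_r}\abs w^2\,dS\le C\bigl(r^{-1}\int_{\Omega_r}\abs w^2+r\int_{\Omega_r}\abs{Dw}^2\bigr)$ with the boundary Poincar\'e inequality $\int_{\Sigma_r}\abs{\vec u-\vec u_{\Omega_r}}^2\,dS\le Cr\int_{\Omega_r}\abs{D\vec u}^2$ (both valid on the Lipschitz domain $\Omega_r(z)$ for small $r$) and Young's inequality, one finds this term is dominated by $\tfrac14\lambda\int_{\Omega_r}\abs{D\vec u}^2+C(\lambda,\norm{\theta}_\infty)\int_{\Omega_r}\abs{\vec u}^2$---that is, every gradient contribution carries either a factor $r$ (absorbed since $r<r_0$) or a small $\varepsilon$, while the residual $\abs{\vec u}^2$-term is scale-invariant. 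A hole-filling iteration then yields
\begin{equation*}
\int_{\Omega_{r/2}(z)}\abs{D\vec u}^2\,dx\le\frac{C}{r^2}\int_{\Omega_r(z)}\abs{\vec u-\vec u_{\Omega_r(z)}}^2\,dx+C\int_{\Omega_r(z)}\abs{\vec u}^2\,dx+Cr^{n}\norm{\vec f}_{L^n(\Omega)}^2 .
\end{equation*}

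Passing to averages and applying Sobolev--Poincar\'e to the first term on the right, together with the embedding $W^{1,2n/(n+2)}(\Omega_r)\hookrightarrow L^2(\Omega_r)$ to control $\fint_{\Omega_r}\abs{\vec u}^2$ by $\bigl(\fint_{\Omega_r}g^{2n/(n+2)}\bigr)^{(n+2)/n}$, produces exactly the reverse H\"older inequality displayed above, with $\norm{\vec f}_{L^n(\Omega)}$ (a function lying in every $L^p$) in the role of inhomogeneity. Gehring's lemma with increasing supports then gives $g\in L^{p_0}_{\mathrm{loc}}$ for some $p_0>2$ depending only on $n,m,\lambda,\Omega,\norm{\theta}_\infty$, together with the quantitative bound, which is \eqref{a1} once we write $g^{p_0}\asymp\abs{D\vec u}^{p_0}+\abs{\vec u}^{p_0}$ and $g^2\asymp\abs{D\vec u}^2+\abs{\vec u}^2$. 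The main obstacle, and the one step requiring genuine care, is the bookkeeping in the boundary Caccioppoli estimate: the trace and Poincar\'e inequalities must be applied so that no negative power of $r$ survives on the $\int_{\Omega_r}\abs{\vec u}^2$ term; this is what forces the subtraction of the mean in the test function and, in turn, dictates the appearance of $\abs{\vec u}$---not only $\abs{D\vec u}$---on the right-hand side of \eqref{a1}.
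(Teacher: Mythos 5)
Your proposal is correct and follows essentially the same route as the paper: a boundary Caccioppoli inequality in which the Robin term is absorbed via the trace and Poincar\'e inequalities on $\Omega_r(x_0)$, a Sobolev--Poincar\'e step to lower the exponent to $q=2n/(n+2)$ on the right, and then Gehring's self-improving lemma. The only genuine (and harmless) deviations are cosmetic: you subtract the plain mean $\vec u_{\Omega_{2\rho}}$ whereas the paper subtracts the $\zeta^2$-weighted mean $\tilde{\vec u}_r=\int_{\Omega_r}\zeta^2\vec u\big/\int_{\Omega_r}\zeta^2$ (chosen so that $\int\zeta^2(\vec u-\tilde{\vec u}_r)=0$), and you first run a hole-filling iteration to remove the residual term $\delta\int_{\Omega_r}\abs{D\vec u}^2$ before invoking Gehring, whereas the paper keeps this term and applies the version of Gehring's lemma (Giaquinta, Proposition~1.1, p.~122 of \cite{Gi83}) that tolerates a multiplicative remainder with coefficient $\delta<1$; both variants are correct and lead to the same higher-integrability exponent $p_0>2$.
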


Next lemma is a variant of \cite[Lemma~4.2]{Kim} and its proof is deferred to Section~\ref{app}.
\begin{lemma}		\label{lem5.6er}
Assume the condition (H1).
Let $\Omega$ be a Lipschitz domain and $\theta \in \sL_\infty(S)^{m\times m}$.
If $\vec u$ is a weak solution of 
\begin{equation}		\label{eq5.38pvd}
\left\{
\begin{aligned}		
\vec u_t-D_\alpha(A^{\alpha\beta}(x) D_\beta \vec u)=0 &\quad \text{in }\, Q=\Omega\times(a,b),\\
\partial \vec u/\partial \nu+\theta(x) \vec u=0 &\quad \text{on }\, S=\partial\Omega\times (a,b),
\end{aligned}
\right.
\end{equation}
then, the following estimates hold for all $0<r<\min(\sqrt{b-a}, \diam\Omega)$:
\begin{align}	
\label{b9}
\sup_{b-(r/2)^2\le s \le b}\norm{\vec u(\cdot, s)}_{L^2(\Omega)}&\le C  r^{-1}\norm{\vec u}_{\sL_2(\Omega\times (b-r^2,b))},\\	
\label{deq1}
\sup_{b-(r/2)^2\le s \le b} \norm{D\vec u(\cdot,s)}_{L^2(\Omega)}&\le Cr^{-2}\norm{\vec u}_{\sL_2(\Omega\times (b-r^2,b))},\\
\label{jk6}
\sup_{b-(r/2)^2\le s \le b} \norm{\vec u_t(\cdot,s)}_{L^2(\Omega)}&\le Cr^{-3}\norm{\vec u}_{\sL_2(\Omega\times (b-r^2,b))}.
\end{align}
Here, $C$ is a constant depending only on $m, n, \lambda, \Omega$, and $\norm{\theta}_\infty$.
\end{lemma}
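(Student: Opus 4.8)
The plan is to prove \eqref{b9} by a Caccioppoli (energy) inequality whose cutoff depends only on the time variable, then to bootstrap to \eqref{jk6} by taking difference quotients in $t$ — using crucially that both $A^{\alpha\beta}$ and $\theta$ are independent of $t$, so that translates of $\vec u$ in time remain solutions — and finally to deduce \eqref{deq1} by viewing each time slice $\vec u(\cdot,s)$ as a weak solution of an elliptic Robin problem with right-hand side $-\vec u_t(\cdot,s)\in L^2(\Omega)^m$. Throughout, (H1) enters only through the coercivity \eqref{2.eq4} combined with the ellipticity \eqref{B2.eq1}, which together give $\int_\Omega A^{\alpha\beta}D_\beta\vec v\cdot D_\alpha\vec v\,dx+\ip{\Theta\vec v,\vec v}\ge(\lambda-\tilde\lambda)\norm{D\vec v}_{L^2(\Omega)}^2$ for all $\vec v\in H^1(\Omega)^m$, and through the trace bound \eqref{eq2.16wine}.

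To establish \eqref{b9}, let $\zeta=\zeta(t)$ be smooth with $\zeta\equiv 1$ on $[b-(r/2)^2,b]$, $\zeta\equiv 0$ on $(-\infty,b-r^2]$, and $\abs{\zeta'}\le Cr^{-2}$. Testing \eqref{eq5.38pvd} with $\vec\phi=\zeta^2\vec u$ — justified by a Steklov-average approximation as in the proof of Lemma~\ref{B3.lem1} — integrating over $\Omega\times(a,s)$, and using that $\zeta$ depends on $t$ alone, gives for every $s\in(a,b]$
\[
\tfrac12\int_\Omega\zeta^2(s)\abs{\vec u(\cdot,s)}^2\,dx+\int_a^s\!\!\int_\Omega\zeta^2 A^{\alpha\beta}D_\beta\vec u\cdot D_\alpha\vec u\,dX+\int_a^s\zeta^2\ip{\Theta\vec u,\vec u}\,dt=-\int_a^s\!\!\int_\Omega\zeta\zeta'\abs{\vec u}^2\,dX.
\]
Bounding the second and third terms on the left from below as above and using $\zeta\abs{\zeta'}\le Cr^{-2}1_{\{t\ge b-r^2\}}$ yields
\[
\sup_{b-(r/2)^2\le s\le b}\int_\Omega\abs{\vec u(\cdot,s)}^2\,dx+\int_{\Omega\times(b-(r/2)^2,b)}\abs{D\vec u}^2\,dX\le Cr^{-2}\norm{\vec u}_{\sL_2(\Omega\times(b-r^2,b))}^2,
\]
which is \eqref{b9} and which, by the same argument over any two concentric time-intervals, also gives $\tri{\vec u}_{\Omega\times(b-(r/2)^2,b)}\le Cr^{-1}\norm{\vec u}_{\sL_2(\Omega\times(b-r^2,b))}$.

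For \eqref{jk6}, fix $h\in(0,r^2/100)$. By $t$-independence of $A^{\alpha\beta}$ and $\theta$, the translate $\vec u(\cdot,\cdot+h)$ is again a weak solution of \eqref{eq5.38pvd} on $\Omega\times(a,b-h)$, hence so is $\vec u^h:=h^{-1}\bigl(\vec u(\cdot,\cdot+h)-\vec u\bigr)$. Inserting the test function $\vec u^h(\cdot,t)\in H^1(\Omega)^m$ into the weak formulation integrated over the interval $(t,t+h)$ gives $h\int_\Omega\abs{\vec u^h(\cdot,t)}^2\,dx=-\int_t^{t+h}\bigl(\int_\Omega A^{\alpha\beta}D_\beta\vec u(\cdot,\tau)\cdot D_\alpha\vec u^h(\cdot,t)\,dx+\ip{\Theta\vec u(\cdot,\tau),\vec u^h(\cdot,t)}\bigr)\,d\tau$; integrating in $t$, applying the Cauchy--Schwarz inequality in $(t,\tau)$ and bounding the boundary term using the boundedness of $\Theta$ and \eqref{eq2.16wine}, one obtains $\norm{\vec u^h}_{\sL_2(\Omega\times I)}^2\le C\,\tri{\vec u}_{\Omega\times I'}\,\tri{\vec u^h}_{\Omega\times I}$ for short time-intervals $I$ (with $I'$ a slight enlargement of $I$ contained in $(b-(r/2)^2,b)$). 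Now estimating $\tri{\vec u^h}_{\Omega\times I}$ by the energy inequality of the previous step applied to $\vec u^h$ over concentric intervals ending at $b-h$, estimating $\tri{\vec u}_{\Omega\times I'}\le Cr^{-1}\norm{\vec u}_{\sL_2(\Omega\times(b-r^2,b))}$, and absorbing via Young's inequality and the iteration lemma \cite[Lemma~5.1]{Gi93} (with the cylinder radius as iteration parameter) yields the sharp bound $\norm{\vec u^h}_{\sL_2(\Omega\times(b-h-(r/4)^2,\,b-h))}\le Cr^{-2}\norm{\vec u}_{\sL_2(\Omega\times(b-r^2,b))}$. Feeding this into \eqref{b9} applied to $\vec u^h$ gives $\sup_s\norm{\vec u^h(\cdot,s)}_{L^2(\Omega)}\le Cr^{-3}\norm{\vec u}_{\sL_2(\Omega\times(b-r^2,b))}$ on the relevant time-range, and letting $h\to0^+$ — so that $\vec u^h\rightharpoonup\vec u_t$, which then solves the same problem near $\{t=b\}$ and hence has a continuous $L^2(\Omega)^m$-valued representative in time — yields \eqref{jk6}.

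Finally, for \eqref{deq1}, restricting the weak formulation to test functions of the separated form $\vec\psi(x)\chi(t)$ shows that, for a.e.\ $s$, $\vec u(\cdot,s)$ is a weak solution of $-D_\alpha(A^{\alpha\beta}D_\beta\vec w)=-\vec u_t(\cdot,s)$ in $\Omega$ with $\partial\vec w/\partial\nu+\theta\vec w=0$ on $\partial\Omega$, whose right-hand side lies in $L^2(\Omega)^m$ by \eqref{jk6}. Testing this elliptic identity with $\vec w=\vec u(\cdot,s)$ and using the coercivity as above gives $(\lambda-\tilde\lambda)\norm{D\vec u(\cdot,s)}_{L^2(\Omega)}^2\le\norm{\vec u_t(\cdot,s)}_{L^2(\Omega)}\norm{\vec u(\cdot,s)}_{L^2(\Omega)}$; combining with \eqref{b9} and \eqref{jk6} gives $\norm{D\vec u(\cdot,s)}_{L^2(\Omega)}^2\le Cr^{-4}\norm{\vec u}_{\sL_2(\Omega\times(b-r^2,b))}^2$, which is \eqref{deq1} after a harmless adjustment of the radii in the earlier steps so that the supremum ranges match the statement; the adjoint case is verbatim. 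I expect the technical heart to be the proof of \eqref{jk6}: obtaining the \emph{sharp} power $r^{-2}$ (rather than $r^{-3}$) in the bound for $\norm{\vec u^h}_{\sL_2}$ forces the self-improving absorption over nested time-intervals, and some care is needed both in the trace estimate for the Robin term and in the bookkeeping of the many $h$-shifted intervals arising near the top $\{t=b\}$.
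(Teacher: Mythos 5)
Your proposal is correct and follows the same overall strategy as the paper: an energy (Caccioppoli) inequality with a time-cutoff for \eqref{b9}; the observation that, by $t$-independence of $A^{\alpha\beta}$ and $\theta$, time-translates (and hence the time-derivative) of $\vec u$ remain weak solutions, yielding an $\sL_2$-bound on $\vec u_t$ and then \eqref{jk6}; and a slice identity at a.e.\ fixed $s$ reducing \eqref{deq1} to an elliptic Robin estimate with right-hand side $-\vec u_t(\cdot,s)$. The one place where you genuinely diverge is the technical heart, the $\sL_2$-bound on $\vec u_t$: the paper tests the slice equation directly with $\zeta^2\vec u_t$ (its inequality \eqref{jk2}), which is a formal computation that presupposes $\vec u_t, D\vec u_t\in\sL_2$ and is therefore only an a priori estimate needing justification by approximation; you instead work throughout with the difference quotients $\vec u^h$, obtaining $\norm{\vec u^h}_{\sL_2(\Omega\times I)}^2\le C\,\tri{\vec u}_{\Omega\times I'}\,\tri{\vec u^h}_{\Omega\times I}$ from the Steklov-type identity and then self-improving over nested time-intervals via Giaquinta's iteration lemma before passing to the limit $h\to 0^+$. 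This makes the step rigorous at the cost of a slightly more involved bookkeeping; both methods need the translation-invariance, the Caccioppoli estimate for $\vec u_t$ (resp.\ $\vec u^h$), the trace inequality to control the Robin term, and the hole-filling iteration. For \eqref{deq1} you invoke the coercivity from (H1) to absorb $\ip{\Theta\vec u,\vec u}$ via $(\lambda-\tilde\lambda)\norm{D\vec u}^2\le\int A D\vec u\cdot D\vec u+\ip{\Theta\vec u,\vec u}$, whereas the paper instead bounds the boundary integral $\int_{\partial\Omega}\tau^2|\vec u|^2$ by a trace/divergence-theorem inequality and absorbs; the two manipulations are interchangeable here and lead to the same $r^{-2}$ power.
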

Recall the notations \eqref{lcd}.
Below, we shall also use the notation
\[
U^{-}_{r,s}(X)=\set{(y,s)\in U^{-}_r(X)}.
\]
We see from Lemma \ref{er.lem2} that there exist constants $r_0>0$ and $p>2$ such that for any $Y\in U^{-}_{R/8}(X_0)$ with $R \le r_0$ and $r<R/8$, we have 
\begin{align}
\nonumber
\int_{U^{-}_{r,s}(Y)}\abs{D\vec u}^2 \,dx &\le Cr^{2-4/p}\left(\int_{U^{-}_{r,s}(Y)}\abs{D\vec u}^p \,dx \right)^{2/p}\le  Cr^{2-4/p}\left(\int_{U^{-}_{R/8,s}(Y)}\abs{D\vec u}^p \,dx\right)^{2/p}\\
\nonumber
&\le C\left(\frac{r}{R}\right)^{2-4/p}\int_{U^{-}_{R/4,s}(Y)} \abs{D\vec u}^2+\abs{\vec u}^2\, dx+Cr^{2-4/p}R^{4/p}\int_{\Omega\times \set{s}}\abs{\vec u_t}^2\,dx\\
\nonumber
&\le C\left(\frac{r}{R}\right)^{2-4/p}\int_{\Omega\times \set{s}} \abs{D\vec u}^2+\abs{\vec u}^2+ R^2 \abs{\vec u_t}^2\,dx.
\end{align}
Therefore, by using Lemma~\ref{lem5.6er}, we get
\[	
\int_{U^{-}_{r,s}(Y)}\abs{D\vec u}^2\,dx \le C\left(\frac{r}{R}\right)^{2-4/p}R^{-4}\int_{\Omega\times (b-R^2,b)}\abs{\vec u}^2\,dX,
\]
where we used that $R \le r_0$.
Then, we have
\[
\int_{U^{-}_r(Y)}\abs{D\vec u}^2 \, dX \le C r^{2+2\mu} R^{-4-2\mu}\int_{\Omega\times(b-R^2,b)}\abs{\vec u}^2\,dX,
\]
where $\mu=1-2/p>0$.
The above inequality corresponds to \eqref{eq5.34hungry} in the proof of Case (ii).
By repeating essentially the same argument as in the proof of Case (ii), we derive from the above inequality that
\[
\sup_{U^{-}_{R/8}(X_0)}\,\abs{\vec u}\le CR^{-2}\norm{\vec u}_{\sL_2(\Omega\times (-R^2,0))}. 
\]
By a standard covering argument, we derive \eqref{30eq2a} from the above inequality.
The proof is complete.
\hfill\qedsymbol

\subsection{Proof of Theorem~\ref{thm4.4a}}
Throughout the proof, we set
\[
\sL=\partial_t + L\quad\text{and}\quad \sL^{*}=-\partial t +L^*.
\]
We recall that (H2') implies (H2) in this setting; see \cite[Theorem~3.3]{Kim}.
Also, it is clear that  (H1') implies (H1).
We set
\[
\vec K(x,y,t)=\vec \cG(x,t,y,0),
\]
where $\vec \cG(x,t,y,s)$ is the Green's function for \eqref{RP}.

\begin{lemma}
For any $x,y\in \Omega$ with $x\neq y$, we have
$\int_0^\infty \abs{\vec K(x,y,t)}\,dt<\infty$.
\end{lemma}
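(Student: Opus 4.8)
The plan is to fix $x\neq y$ in $\Omega$, put $Y=(y,0)$, $d_y=\dist(y,\partial\Omega)$, and set $\rho:=\tfrac1{16}\min(\abs{x-y},\dist(x,\partial\Omega),d_y)>0$. I would split $\int_0^\infty\abs{\vec K(x,y,t)}\,dt$ over $(0,d_y^2]$ and $(d_y^2,\infty)$, proving a uniform bound on the first (finite) interval and exponential decay $\abs{\vec K(x,y,t)}\le C_{x,y}e^{-\vartheta_0 t}$ on the second; both obviously suffice. Note that $\partial_pQ=\partial\Omega\times(-\infty,\infty)$ for $Q=\Omega\times(-\infty,\infty)$, so $\dist(\,\cdot\,,\partial_pQ)$ is just the distance to $\partial\Omega$.

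The starting point is a pointwise bound valid for every $t>0$. With $X_t:=(x,t)$, the cylinder $Q^-_\rho(X_t)=B_\rho(x)\times(t-\rho^2,t)$ lies in $Q$ (as $\rho<\dist(x,\partial\Omega)$) and stays at parabolic distance $\ge15\rho$ from $Y$, since $\abs{z-y}\ge\abs{x-y}-\rho\ge15\rho$ for $z\in B_\rho(x)$; hence $\vec\cG(\cdot,Y)$ is a weak solution of $\sL\vec u=0$ in $Q^-_\rho(X_t)$ lying there in $\sV_2$, by property~b) of Section~\ref{2.rp}. Applying (H2) and passing from the parabolic H\"older seminorm bound to an $L^\infty$ bound in the standard way (the value $\vec\cG(X_t,Y)$ being unambiguous since $\vec\cG(\cdot,Y)$ is continuous off $Y$ by Theorem~\ref{B3.thm1}) gives
\begin{equation}
\label{gk.ptwise}
\abs{\vec K(x,y,t)}=\abs{\vec\cG(X_t,Y)}\le C\rho^{-(n+2)/2}\,\norm{\vec\cG(\cdot,Y)}_{\sL_2(Q^-_\rho(X_t))},\qquad t>0 .
\end{equation}
For $0<t\le d_y^2$ the set $Q^-_\rho(X_t)$ lies in $\cR:=B_\rho(x)\times(-\rho^2,d_y^2)$, which by the distance estimate above is contained in $Q\setminus\bar Q_{14\rho}(Y)$; so the estimate $\norm{\vec\cG(\cdot,Y)}_{\sL_{2(n+2)/n}(Q\setminus\bar Q_r(Y))}\le Cr^{-n/2}$ of Remark~\ref{rmk3.8} (with $r=14\rho<d_y$) together with H\"older's inequality on $\cR$ (using $\abs{\cR}<\infty$ and $2<2(n+2)/n$) yields $\norm{\vec\cG(\cdot,Y)}_{\sL_2(\cR)}<\infty$, whence $\sup_{0<t\le d_y^2}\abs{\vec K(x,y,t)}<\infty$ and $\int_0^{d_y^2}\abs{\vec K(x,y,t)}\,dt<\infty$.

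The decay for large $t$ is where the coercivity \eqref{2.eq4} of (H1) enters. On $\Omega\times(d_y^2/2,\infty)$ each column $\vec\cG_{\cdot k}(\cdot,Y)$ is a weak solution of the \emph{homogeneous} Robin problem $\sL\vec u=0$, $\partial\vec u/\partial\nu+\Theta\vec u=0$ (the singularity $\delta_Y$ sits on the slice $t=0$), belongs to $\sV_2$ there, and $M_y:=\norm{\vec\cG(\cdot,\tfrac{d_y^2}{2},y,0)}_{L^2(\Omega)}<\infty$ by the estimate $\tri{(1-\eta)\vec\cG_{\cdot k}(\cdot,Y)}_Q\le Cr^{-n/2}$ of Remark~\ref{rmk3.8} with $r=d_y/2$. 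Running the Steklov-averaged energy identity on $\Omega\times(d_y^2/2,\sigma)$ and using strong ellipticity together with \eqref{2.eq4} gives, for a.e.\ $\sigma>d_y^2/2$, $\tfrac{d}{d\sigma}\norm{\vec\cG_{\cdot k}(\cdot,\sigma,y,0)}_{L^2(\Omega)}^2\le-2\vartheta_0\norm{\vec\cG_{\cdot k}(\cdot,\sigma,y,0)}_{L^2(\Omega)}^2$, hence
\[
\norm{\vec\cG(\cdot,\sigma,y,0)}_{L^2(\Omega)}\le C M_y\,e^{-\vartheta_0(\sigma-d_y^2/2)},\qquad \sigma\ge d_y^2/2 .
\]
Since $\rho^2<d_y^2/2$, for $t\ge d_y^2$ the interval $(t-\rho^2,t)$ lies in $(d_y^2/2,\infty)$, so inserting this into \eqref{gk.ptwise} gives $\abs{\vec K(x,y,t)}\le C_{x,y}e^{-\vartheta_0 t}$, whence $\int_{d_y^2}^\infty\abs{\vec K(x,y,t)}\,dt<\infty$; adding the two contributions proves the lemma.

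I expect the one genuinely delicate point to be the rigorous justification of the energy identity for $\vec\cG_{\cdot k}(\cdot,Y)$ on the half-infinite cylinder $\Omega\times(d_y^2/2,\infty)$ --- i.e.\ that the restriction of $\vec\cG_{\cdot k}(\cdot,Y)$ there is a bona fide $\sV_2$ weak solution of the homogeneous Robin problem to which the Steklov-averaging scheme and the ensuing differential inequality legitimately apply (equivalently, that it coincides with the $\sV_2^{1,0}$ solution produced by \eqref{EP.1b} with initial data $\vec\cG_{\cdot k}(\cdot,\tfrac{d_y^2}{2},y,0)$). This is, however, entirely parallel to the computation \eqref{app.eq1c}--\eqref{eq5.13hu} and to the energy argument in the proof of Theorem~\ref{B3.thm2}; everything else reduces to (H2) and to the a priori estimates collected in Remark~\ref{rmk3.8}.
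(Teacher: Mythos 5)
Your proposal is correct and follows essentially the same route as the paper. The paper likewise derives the Gronwall inequality $I'(t)\le -2\vartheta_0 I(t)$ from the Steklov-averaged energy identity combined with the coercivity \eqref{2.eq4} of (H1), and then cites \cite[Lemma~3.12]{DK09} for the remainder, which is precisely the decomposition you write out: exponential decay for large $t$ from that differential inequality, and the finite near-$t=0$ contribution controlled via (H2) together with the estimates of Remark~\ref{rmk3.8}.
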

\begin{proof}
Let $\vec u(x,t)= \vec K_{\cdot k}(x,y,t)$, where $k=1,\ldots, m$.
Note that $\vec u$ is a weak solution in $\sV_2^{1,0}(\Omega\times (0,\infty))^m$ of
\[
\vec u_t+ L \vec u = 0\;\text{ in }\; \Omega\times (0,\infty),\quad \partial \vec u/\partial  \nu+\Theta \vec u =0\;\text{ on }\;\partial\Omega\times (0,\infty).
\]
By a standard approximation involving Steklov average (see proof of Lemma~\ref{B3.lem1}), for a.e. $t>0$, we get
\[
\frac{d}{dt} \int_\Omega  \frac{1}{2} \abs{\vec u(x,t)}^2\,dx + \int_\Omega A^{\alpha\beta}(x) D_\beta \vec u(x,t) \cdot D_\alpha \vec u (x,t )\,dx +\ip{\Theta \vec u(\cdot, t), \vec u(\cdot,t)} =0.
\]
Therefore, by \eqref{2.eq4},  $I(t):=\int_\Omega \abs{\vec u(x,t)}^2\,dx$ satisfies
\begin{equation}		\label{eq5.24epf}
I'(t) \le -2 \vartheta_0 I(t)\quad\text{for a.e. $t>0$}.
\end{equation}
The rest of proof is the same as that of \cite[Lemma~3.12]{DK09}.
\end{proof}
We define the Green's function $\vec G(x,y)$ for \eqref{ERP} by
\begin{equation}
\label{eq:g01}
\vec G(x,y):=  \int_0^\infty \vec K(x,y,t)\,dt.
\end{equation}
We similarly define the Green's function $\vec G^{*}(x,y)$ for adjoint Robin problem.
Then, by \eqref{ctn.eq1} we find that (see \cite[Eq.~(3.21)]{DK09})
\[
\vec G^{*}(x,y)= \int_0^\infty \vec \cG^{*}(x,-t,y,0)\,dt=\int_0^\infty \vec K(y,x,t)^\top\,dt=\vec G(y,x)^\top,
\]
where $\vec \cG^{*}(x,t,y,s)$ is the Green's function for \eqref{RPs}.
We shall prove below that $\vec G(x,y)$ indeed enjoys the properties stated in Section~\ref{sec:4.2enf}.
Denote
\[
\tilde{\vec K}(x,y,t)=\int_0^t \vec K(x,y,s)\,ds.
\]
so that we have
\[
\vec G(x,y)=\lim_{t\to \infty} \tilde{\vec K}(x,y,t).
\]

\begin{lemma}			\label{lem4.10a}
The following holds uniformly for  $t>0$ and $y\in\Omega$, where $d_y=\dist(y,\partial\Omega)$.
\begin{enumerate}[i)]
\item
$\norm{\tilde{\vec K}(\cdot,y,t)}_{L^p(B(y,d_y))} \le C  d_y^{n/p-n}\quad \text{for }\; p \in \bigl[1,\frac{n+2}{n}\bigr)$.

\item
$\norm{\tilde{\vec K}(\cdot,y,t)}_{L^{2(n+2)/n}(\Omega\setminus B(y,r))}
\le C r^{-\frac{n(n+4)}{2(n+2)}},\quad \forall r \in (0, d_y]$.

\item
$\norm{D\tilde{\vec K}(\cdot,y,t)}_{L^p(B(y,d_y))}\le C  d_y^{-1-n+n/p} \quad \text{for }\; p \in  \bigl[1,\frac{n+2}{n+1}\bigr)$.

\item
$\norm{D\tilde{\vec K}(\cdot,y,t)}_{L^2(\Omega\setminus B(y,r))} \le C  r^{-1-n/2},\quad \forall r \in (0, d_y]$.
\end{enumerate}
In the above, $C$ is a constant depending only on $n, m, \lambda, p, \diam \Omega$, and parameters appearing in (H1') and (H2').
\end{lemma}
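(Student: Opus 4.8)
The plan is to estimate the truncated kernel by splitting the time integral, $\tilde{\vec K}_{\cdot k}(\cdot,y,t)=\vec v+\vec w$ with $\vec v:=\int_0^{\min(T,t)}\vec K_{\cdot k}(\cdot,y,s)\,ds$ and $\vec w:=\int_{\min(T,t)}^{t}\vec K_{\cdot k}(\cdot,y,s)\,ds$, where I take $T=d_y^2$ for the estimates i) and iii) over $B(y,d_y)$, and $T=r^2$ for the estimates ii) and iv) over $\Omega\setminus B(y,r)$; the summand $\vec w$ is absent when $t\le T$. Since $\vec K(x,y,s)=\vec\cG(x,s,y,0)$ and $d_Y=d_y$ for $Y=(y,0)$, the singular piece $\vec v$ is controlled entirely by the parabolic estimates of Remark~\ref{rmk3.8}: first I would use $\abs{\vec v(x)}\le\int_0^T\abs{\vec K_{\cdot k}(x,y,s)}\,ds$ together with H\"older's inequality in the $s$-variable to bound the spatial $L^p$ (or $L^2$, or gradient) norm of $\vec v$ by a power of $T$ times a space--time norm of $\vec\cG(\cdot,Y)$ over $Q_T(Y)$ (for i), iii)) or over $Q\setminus\bar Q_T(Y)$ (for ii), iv)), and then invoke Remark~\ref{rmk3.8} v), vi), ii), i) respectively. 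The extra powers of $T=d_y^2$ or $r^2$ produced by this crude step are powers of $d_y$ or $r$, hence $\le(\diam\Omega)^2$, and are absorbed into $C$; this is precisely why the exponents claimed in the lemma are not sharp.

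Next I would establish the key pointwise-in-time decay estimate for a column $\vec u:=\vec K_{\cdot k}(\cdot,y,\cdot)$, which is a weak solution of the homogeneous Robin problem in $\Omega\times(0,\infty)$. By the energy identity obtained through Steklov averaging (as in the proof of Lemma~\ref{B3.lem1} and of the preceding lemma), $I'(s)\le-2\vartheta_0 I(s)$ for $I(s):=\norm{\vec u(\cdot,s)}_{L^2(\Omega)}^2$, that is \eqref{eq5.24epf}, so $I(s)\le e^{-2\vartheta_0(s-\sigma)}I(\sigma)$ for $0<\sigma<s$. Combining this with Remark~\ref{rmk3.8} i)---noting that the cut-off $\eta$ of \eqref{51.eq1h} supported in $Q_\rho(Y)$ vanishes identically at any time exceeding $\rho^2$, whence $\norm{\vec u(\cdot,\tau)}_{L^2(\Omega)}^2\le C\rho^{-n}$ whenever $\tau>\rho^2$ and $\rho<d_y$---and letting $\rho\uparrow\min(\sqrt s,d_y)$, I would conclude $\norm{\vec u(\cdot,s)}_{L^2(\Omega)}\le C\min(s,d_y^2)^{-n/4}$ for all $s>0$ and $\norm{\vec u(\cdot,s)}_{L^2(\Omega)}\le C d_y^{-n/2}e^{-\vartheta_0(s-d_y^2)}$ for $s\ge d_y^2$. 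In particular $\norm{\vec u(\cdot,s)}_{L^2(\Omega)}\le C\,T^{-n/4}$ for all $s\ge T$ and $\int_T^\infty\norm{\vec u(\cdot,s)}_{L^2(\Omega)}\,ds\le C\,T^{-n/4}$ (with a harmless extra $\log(1/r)$ in the borderline case $n=4$, $T=r^2$).

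For the regular piece $\vec w$ I would integrate $\partial_s\vec u+L\vec u=0$ over $(\min(T,t),t)$; since $\int_T^t\norm{D\vec u(\cdot,s)}_{L^2(\Omega)}\,ds<\infty$ (from $\vec u\in\sV_2$ and Cauchy--Schwarz on the finite interval), this shows $\vec w\in H^1(\Omega)^m$ is a weak solution of the elliptic Robin problem $L\vec w=\vec K_{\cdot k}(\cdot,y,T)-\vec K_{\cdot k}(\cdot,y,t)$ in $\Omega$, $\partial\vec w/\partial\nu+\Theta\vec w=0$ on $\partial\Omega$, whose right-hand side has $L^2(\Omega)$-norm $\le C\,T^{-n/4}$ by the previous step. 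Testing the weak formulation with $\vec w$ and using the coercivity in (H1') I would get $\vartheta_0\norm{\vec w}_{H^1(\Omega)}^2\le\tilde\lambda\norm{D\vec w}_{L^2(\Omega)}^2+\ip{\Theta\vec w,\vec w}\le C\,T^{-n/4}\norm{\vec w}_{L^2(\Omega)}$, together with $\norm{\vec w}_{L^2(\Omega)}\le\int_T^\infty\norm{\vec u(\cdot,s)}_{L^2(\Omega)}\,ds\le C\,T^{-n/4}$ by Minkowski; hence $\norm{\vec w}_{H^1(\Omega)}\le C\,T^{-n/4}$. This yields $\norm{D\vec w}_{L^2(\Omega)}\le C\,T^{-n/4}$ directly (iv)), $\norm{\vec w}_{L^p(B(y,d_y))}+\norm{D\vec w}_{L^p(B(y,d_y))}\le C\,d_y^{n(1/p-1/2)}\norm{\vec w}_{H^1(\Omega)}$ by H\"older for $p<2$ (i), iii)), and---for ii), where the target exponent $2(n+2)/n$ exceeds $2$---$\norm{\vec w}_{L^{2(n+2)/n}(\Omega)}$ by feeding $\norm{\vec w}_{H^1(\Omega)}$ and $\norm{\vec w}_{L^2(\Omega)}$ into the multiplicative inequality \eqref{eq2.11ar}. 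Adding the $\vec v$ and $\vec w$ contributions gives the four bounds, uniformly in $t$ because nothing depends on $t$, only on $T$.

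The overall scheme is that of \cite{DK09,CK2}; the one place where the Robin boundary term genuinely matters is part ii), where $L^{2(n+2)/n}(\Omega)$ is not controlled by $L^2(\Omega)$ on a general bounded extension domain, so the passage $H^1(\Omega)\to L^{2(n+2)/n}(\Omega)$ must go through \eqref{eq2.11ar}---whose validity is exactly the content of (H1'), and in particular uses that the Robin form $\ip{\Theta\cdot,\cdot}$ is coercive once added to $\tilde\lambda\norm{D\cdot}_{L^2(\Omega)}^2$. Apart from that, the main effort is the routine but lengthy bookkeeping of exponents; one checks that it closes in each of the ranges $n=2,3,4$ and $n\ge5$, with the $\log(1/r)$ appearing for $n=4$ dominated by any negative power of $r$.
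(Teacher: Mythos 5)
Your proposal is correct and reconstructs the argument the paper defers to via the citation of \cite[Lemma~3.23]{DK09}: split $\tilde{\vec K}$ at time $T=d_y^2$ (resp.\ $T=r^2$), control the singular piece by the space-time estimates of Remark~\ref{rmk3.8} applied to $\vec\cG(\cdot,Y)$ on $Q_{\sqrt T}(Y)$ (resp.\ on $Q\setminus \bar Q_{\sqrt T}(Y)$), and control the tail $\vec w=\int_T^t\vec K\,ds$ by the exponential $L^2$-decay \eqref{eq5.24epf} combined with the observation that $\vec w$ solves the elliptic Robin problem with $L^2$ data, hence is bounded in $H^1$ by the coercivity in (H1'). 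Aside from the minor notational slip of writing $Q_T(Y)$ for $Q_{\sqrt T}(Y)$, the bookkeeping of exponents, the treatment of the borderline $n=4$ logarithm, and the use of \eqref{eq2.11ar} for the $L^{2(n+2)/n}$-norm in part~ii) are all correct.
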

\begin{proof}
See \cite[Lemma~3.23]{DK09}.
\end{proof}

We need to show that $\vec G(x,y)$ defined by the formula \eqref{eq:g01} satisfies the properties i) -- iii) in Section~\ref{sec:4.2enf}.
We begin with iii).
Let $\vec u$ be defined by \eqref{eq4.01e}. 
Note that
\[
\vec v(x,t) := \int_\Omega \tilde{\vec K}(y,x,t)^\top \vec f(y)\,dy= \int_0^t \!\!\!\int_\Omega \vec K(y,x,s)^\top \vec f(y)\,dy\, ds
\]
is absolutely convergent by Lemma~\ref{lem4.10a}, and
\begin{align}
\label{eq:E29a}
\lim_{t\to\infty} \vec v(x,t)&=\int_\Omega \vec G(y,x)^\top \vec f(y)\,dy=\vec u(x),\\
\nonumber
\vec v_t(x,t)&=\int_\Omega \vec K(y,x,t)^\top \vec f(y)=\int_\Omega \vec \cG^{*}(x,-t,y,0) \vec f(y)\,dy.
\end{align}
By differential inequality \eqref{eq5.24epf} applied to $\vec v_t$, we get (c.f. \cite[Eq.~(3.43)]{DK09})
\begin{equation}		\label{eq:E29b}
\norm{\vec v_t(\cdot,t)}_{L^2(\Omega)} \le e^{-\vartheta_0 t}\norm{\vec f}_{L^2(\Omega)},\quad \forall t>0,
\end{equation}
which implies $\vec v_t(\cdot,t)\in L^2(\Omega)^m$.
By using $\vec \cG^{*}(x,t,y,s)=\vec \cG^{*}(x,t-s,y,0)$, we have
\[
\vec v(x,t)=\int_{-t}^0 \int_\Omega \vec \cG^{*}(x,-t,y,s) \vec f(y)\,dy\, ds,
\]
and thus, for a.e. $t>0$, we get (c.f. \cite[Eq.~(3.46)]{DK09})
\begin{equation}	\label{eq5.28jh}
\int_\Omega \vec v(\cdot,t)\cdot \vec v_t(\cdot,t)+\int_\Omega A^{\alpha\beta}D_\beta \vec v(\cdot,t) \cdot D_\alpha \vec v(\cdot,t)+\ip{\Theta\vec v(\cdot,t),\vec v(\cdot,t)}
=\int_\Omega \vec f \cdot \vec v(\cdot,t).
\end{equation}
We  record that $\vec v$ also satisfies for a.e. $t>0$ the identity
\begin{equation}	\label{eq5.29td}
\int_\Omega \vec v_t(\cdot, t)\cdot \vec \phi +\int_\Omega A^{\alpha\beta}D_\beta \vec \phi \cdot D_\alpha \vec v(\cdot,t)+\ip{\Theta\vec \phi,\vec v(\cdot,t)}
=\int_\Omega \vec f\cdot \vec \phi,\quad \forall \vec \phi \in H^1(\Omega)^m.
\end{equation}
By using \eqref{eq:E29b}, we obtain from \eqref{eq5.28jh} that
\[
\tilde{\lambda} \norm{D \vec v(\cdot,t)}_{L^2(\Omega)}+\ip{\Theta \vec v(\cdot,t), \vec v(\cdot,t)} \le C \norm{\vec f}_{L^2(\Omega)} \norm{\vec v}_{L^2(\Omega)}
\]
Therefore, by \eqref{2.eq4} and Cauchy's inequality, for a.e. $t>0$, we have
\[
\norm{\vec v(\cdot,t )}_{H^{1}(\Omega)} \le C \norm{\vec f}_{L^2(\Omega)}.
\]
Then, there is a sequence $t_m\to \infty$ such that $\vec v(\cdot, t_m) \rightharpoonup \tilde{\vec u}$ weakly in $H^{1}(\Omega)^m$ for some $\tilde{\vec u} \in H^{1}(\Omega)^m$.
By \eqref{eq:E29a}, we must have $\vec u=\tilde{\vec u}$.
Then, by using  \eqref{eq:E29b} and taking limit in \eqref{eq5.29td}, we see that $\vec u$ is a weak solution of the problem \eqref{eq4.02e}.
We have thus verified the property iii).
By repeating essentially the same proof of \cite[Theorem~2.12]{DK09}, we find that $\vec G(x,y)$ satisfies the properties i) and ii) as well.

Now, suppose (H3') also holds.
In the rest of the proof, we shall denote
\[
R:= \diam\Omega.
\]
It is clear that  (H3') implies (H3) and thus, by \eqref{3.eq2}, for $X=(x,t)\in Q:=\Omega\times (-\infty,\infty)$ satisfying $\abs{t}\le R^2$, we have
\begin{equation}					\label{eq618z}
\abs{\vec K(x,y,t)} \le C\abs{X-\bar{Y}}_{\sP}^{-n},\quad \bar Y=(y,0).
\end{equation}
By using \eqref{app.eq1c} and (H3'), we get similar to \eqref{wse.eq2b} that
\[
\tri{\vec K(\cdot,y)}_{Q \setminus Q(\bar{Y},r)} \le Cr^{-n/2},\quad   \forall r \in (0,R].
\]
Similar to \cite[Eq.~(3.59)]{DK09}, for $0<r \le R$ and $t \ge 2r^2$, we have
\begin{equation}							\label{eq12.27g}
\abs{\vec K(x,y,t)} \le C r^{-n}e^{-\vartheta_0 (t-2r^2)}.
\end{equation}
By \eqref{eq:g01} we have
\[
\abs{\vec G(x,y)} \le \int_0^{\abs{x-y}^2} + \int_{\abs{x-y}^2}^{2R^2}+\int_{2 R^2}^\infty \abs{\vec K(x,y,t)}\,dt =: I_1+I_2+I_3.
\]
It then follows from \eqref{eq618z} and \eqref{eq12.27g} that
\begin{align*}
I_1 &\le C\int_0^{\abs{x-y}^2}\abs{x-y}^{-n}\,dt \le C\abs{x-y}^{2-n},\\
I_2&\le C\int_{\abs{x-y}^2}^{2R^2}t^{-n/2}\,dt\le
\begin{cases}  C+C\ln (R/\abs{x-y}) & \text{if $n=2$,}
\\
C\abs{x-y}^{2-n} &\text{if $n\ge 3$.}
\end{cases} \\
I_3&\le C\int_{2R^2}^\infty d^{-n} e^{-\vartheta_0 (t-2R^2)}\,dt \le C \vartheta_0^{-1} R^{-n}.
\end{align*}
Combining all together we get that if $0<\abs{x-y}\le r$, then
\[
\abs{\vec G(x,y)} \le
\begin{cases}  C\left(1+\vartheta_0^{-1} R^{-2}+ \ln (R/\abs{x-y}) \right) & \text{if $n=2$,}\\
C\left(1+ \vartheta_0^{-1} R^{-2}\right)\abs{x-y}^{2-n} &\text{if $n\ge 3$.}
\end{cases}
\]
The theorem is proved.
\hfill\qedsymbol

\section{Proofs of technical lemmas}		\label{app}

\subsection{Proof of Lemma~\ref{B3.lem1}}
Let $\set{\vec \psi_k}_{k=1}^\infty$ be an orthogonal basis for $H^1(\Omega)^m$ that is normalized so that
\[
(\vec \psi_k, \vec \psi_l):=\int_\Omega \vec \psi_k \cdot \vec \psi_l=\delta_{kl}.
\]
By standard Galerkin's method, we construct an approximate solution $\vec u^N(x,t)$ of the form
\[
\vec u^N(x,t)=\sum_{k=1}^{N} c^N_k(t)\vec \psi_k(x)
\]
where $c_k^N(t)=(\vec u^N(\cdot,t),\vec \psi_k)$ are determined by the conditions
\[
\frac{d}{dt} (\vec u^N(\cdot,t),\vec \psi_k)+(A^{\alpha\beta}(\cdot,t)D_\beta \vec u^N(\cdot,t),D_\alpha \vec \psi_k)+\ip{\Theta(t) \vec u^N(\cdot,t),\vec \psi_k}=(\vec f(\cdot,t), \vec \psi_k)
\]
and
\[
c^N_k(a)=(\vec \psi_0,\vec \psi_k)
\]
for $k=1,\cdots, N$.
Therefore, one can easily verify that for any $a \le t_1<t_2 \le b$, we have
\[
\frac{1}{2}\Norm{\vec u^N(\cdot,t)}_{L^2(\Omega)}^2\, \Big\vert_{t=t_1}^{t=t_2}+\int_{t_1}^{t_2}\!\!\! \int_\Omega A^{\alpha\beta}D_\beta \vec u^N \cdot D_\alpha \vec u^N+\int_{t_1}^{t_2}\Ip{\Theta \vec u^N,\vec u^N}=\int_{t_1}^{t_2}\!\!\!\int_\Omega \vec f \cdot \vec u^N,
\]
and thus, by \cite[Lemma~2.1, p.139]{LSU} and \eqref{eq2.19ar} we have the uniform estimate
\[
\tri{\vec u^N}_{Q}  \le C \tri{\vec u^N}_{\Theta;Q}\le C\left(\norm{\vec f}_{\sL_{2,1}(Q)}+ \norm{\vec \psi_0}_{L^2(\Omega)}\right) <\infty
\]
for all $N=1,2,\ldots$.
Then by following literally the same steps as in the proof of \cite[Theorem~4.1, p. 153]{LSU}, we find that there exists a weak solution $\vec u \in \sV_2(Q)^m$ of the problem \eqref{B2.eq2}. 

Next, we show that $\vec u \in \sV_2(Q)^m$ obtained above actually belongs to $\sV^{1,0}_2(Q)^m$;
i.e. $\vec u$ is strongly continuous in $t$ in the norm of $L^2(\Omega)^m$.
To see this, we follow the argument in \cite[pp. 156-159]{LSU}.
Without loss of generality, we assume that $[a,b]=[0,T]$.
Note that $\vec u$ satisfies the identity
\[
-\int_Q \vec u \cdot \vec \phi_t\,dX-\int_\Omega \vec \psi_0 \cdot \vec \phi(\cdot,0)\,dx=\int_Q \vec F_\alpha \cdot D_\alpha \vec \phi\,dX-\int_0^T \ip{\Theta \vec u,\vec \phi}\,dt+\int_Q \vec f\cdot \vec \phi\,dX
\]
for any $\vec \phi\in \sW^{1,1}_2(Q)^m$ that is equal to zero for $t=T$.
Here, we set
\[
\vec F_\alpha=-A^{\alpha\beta}D_\beta \vec u.
\]
Let $\tilde{\vec u}$ be an even extension of $\vec u$ and let $\tilde{\vec F}_\alpha, \tilde{\Theta}$, and $\tilde{\vec f}$, respectively, be odd extensions of $\vec F_\alpha, \Theta$, and $\vec f$ onto $\tilde{Q}=\Omega\times (-\infty,\infty)$ similar to \cite[Eq.~(4.9), p. 157]{LSU}
so that we have the identity
\begin{equation}		\label{c1}
-\int_{\tilde{Q}} \tilde{\vec u} \cdot \vec \phi_t=\int_{\tilde{Q}} \tilde{\vec F}_\alpha \cdot D_\alpha \vec \phi-\int_{-\infty}^\infty \Ip{\tilde{\Theta} \tilde{\vec u},\vec \phi}+\int_{\tilde{Q}} \tilde{\vec f}\cdot \vec \phi
\end{equation}
for any $\vec \phi\in \sW^{1,1}_2(\tilde{Q})^m$ that is equal to zero for $\abs{t}\ge T$.
We note that the identity \eqref{c1} corresponds to \cite[Eq.~(4.10), p. 157]{LSU} and we derive from \eqref{c1} the following identity that corresponds to \cite[Eq.~(4.12), p. 157]{LSU}:
\[
-\int_{\tilde{Q}} \vec v \cdot \vec \Phi_t =\int_{\tilde{Q}} \left( \vec G^\alpha \cdot D_\alpha \vec \Phi+  \vec g \cdot \vec \Phi \right) -\int_{-\infty}^\infty\Ip{\tilde{\Theta} \vec v,\vec \Phi},
\]
where $\vec v$, $\vec G^\alpha$ and $\vec g$ are given element of spaces $\sV_2(\tilde{Q})^m$,  $\sL_2(\tilde{Q})^m$ and $\sL_{2,1}(\tilde{Q})^m$, respectively, that are equal to zero for $\abs{t} \ge T$, while $\vec \Phi$ is an arbitrary element of $\sW^{1,1}_2(\tilde{Q})^m$.
As a matter of fact, we have
\[
\vec v(x,t)=\omega(t) \tilde{\vec u}(x,t), \quad \vec G^\alpha=\omega\tilde{\vec F}_\alpha, \quad \text{and}\quad \vec g=\omega\tilde{\vec f} +\omega_t \tilde{\vec u},
\]
where $\omega$ is a smooth function that is equal to $1$ for $\abs{t} \le T-\delta$, for some positive number $\delta$, and to zero for $\abs{t}\ge T$.
Similar to \cite[Eq.~(4.17), p. 159]{LSU}, we get
\begin{multline*}
\frac{1}{2}\norm{\vec v_{h_1}-\vec v_{h_2}}^2_{L^2(\Omega)}  \Big|_{t=t_1}^{t=t_2} =
\int_{t_1}^{t_2}\!\!\!\int_{\Omega}\left\{(\vec G^\alpha_{h_1}-\vec G^\alpha_{h_2}) \cdot D_\alpha (\vec v_{h_1}-\vec v_{h_2}) +(\vec g_{h_1}-\vec g_{h_2})\cdot (\vec v_{h_1}-\vec v_{h_2})\right\}\\
-\int_{t_1}^{t_2}\left( \Ip{\tilde{\Theta}\vec v,\vec v_{h_1}-\vec v_{h_2}}_{h_1}-\Ip{\tilde{\Theta} \vec v,\vec v_{h_1}-\vec v_{h_2}}_{h_2} \right)dt,
\end{multline*}
where we used the notation $\vec u_h(x,t)=\fint_t^{t+h} \vec u(x,\tau)\,d\tau$.
Note that we have
\begin{multline}			\label{eq6.02bk}
\Abs{\int_{t_1}^{t_2}\Ip{\tilde{\Theta}\vec v,\vec v_{h_1}-\vec v_{h_2}}_{h}\,dt }\le\int_{-\infty}^\infty \Abs{\Ip{\tilde{\Theta}(\tau)\vec v(\cdot,\tau),\vec v_{h_1}(\cdot,\tau)-\vec v_{h_2}(\cdot,\tau)}}\,d\tau\\
\le C \left(\int_{-\infty}^\infty \norm{\vec v(\cdot,\tau)}^2_{H^1(\Omega)}\,d\tau\right)^{1/2}\left(\int_{-\infty}^\infty \norm{\vec v_{h_1}(\cdot,\tau)-\vec v_{h_2}(\cdot,\tau)}^2_{H^1(\Omega)}\,d\tau\right)^{1/2}
\end{multline}
uniformly for all $t_1$, $t_2$ and $h$.
Since $\vec v \in \sV_2(\tilde{Q})^m$ and vanishes for $\abs{t}\ge T$, the last term in the above inequality tends to zero as $h_1$ and  $h_2$ tends to zero.
Therefore, arguing similar to the proof of \cite[Lemma~4.1, p. 158]{LSU}, we find that $\vec u \in \sV^{1,0}_2(Q)^m$.

Finally, we prove the energy inequality \eqref{enieq}.
The uniqueness of weak solution in the space $\sV^{1,0}_2(Q)^m$ is a simple consequence of \eqref{enieq} and \eqref{eq2.19ar}.
The assumptions on $\Theta$ in (H1) implies that for a.e. $t\in(-\infty, \infty)$, we have (see \cite[Chapter V, \S 5]{Yosida})
\begin{equation}	\label{6.02sf}
\Ip{\Theta(t)\vec u, \int_\alpha^\beta \vec v(\cdot, \tau)\,d\tau}=\int_{\alpha}^\beta \ip{\Theta(t) \vec u,\vec v(\cdot, \tau)}\,d\tau
\end{equation}
for any $\vec u \in H^1(\Omega)^m$ and $\vec v \in \sW^{1,0}_2( \Omega\times (\alpha,\beta))^m$.
Then, similar to \cite[Eq.~(2.12), p. 142]{LSU}, for any $t_1 \in [0,T-h]$, we have the identity
\begin{multline}		\label{jj.eq1b}
\frac{1}{2}\int_\Omega \abs{\vec u_h}^2(x,t)\,dx\;\Big|_{t=0}^{t=t_1}+\int_0^{t_1}\!\!\!\int_\Omega (A^{\alpha\beta}D_\beta \vec u)_h \cdot D_\alpha \vec u_h \, dX\\
+\int_0^{t_1}\!\!\! \fint_{\tau}^{\tau+h} \ip{\Theta(t)\vec u(\cdot,t),\vec u_h(\cdot,\tau)}\,dt \,d\tau=\int_0^{t_1}\!\!\!\int_\Omega \vec f_h\cdot  \vec u_h.
\end{multline}
Note that by Fubini's theorem and \eqref{6.02sf}, we have
\begin{align*}
\int_0^{t_1}\!\!\! \fint_{\tau}^{\tau+h} \ip{\Theta(t)\vec u(\cdot,t),\vec u_h(\cdot,\tau)}\,dt \,d\tau
&=\int_{-\infty}^\infty \Ip{\Theta(t)\vec u(\cdot,t), \int_0^{t_1} \frac{1}{h} 1_{(t-h,t)}(\tau) \vec u_h(\cdot, \tau)\,d\tau}dt \\
&=\int_{-\infty}^\infty \Ip{\Theta(t)\vec u(\cdot,t), \int_{-\infty}^{\infty} \chi_h(s-t) 1_{(0,t_1)}(s) \vec u(\cdot, s)\,ds}dt,
\end{align*}
where $\chi_h(t)=h^{-1}(1-\abs{x/h})_{+}$.
Therefore, by setting $\vec v(x,t)= 1_{(0,t_1)}(t) \vec u(x,t)$, we have
\begin{multline*}
\int_0^{t_1}\!\!\! \fint_{\tau}^{\tau+h} \ip{\Theta(t)\vec u(\cdot,t),\vec u_h(\cdot,\tau)}\,dt \,d\tau-\int_0^{t_1}\ip{\Theta(t)\vec u(\cdot,t),\vec u(\cdot,t)}\,dt\\
=\int_{-\infty}^\infty \Ip{\Theta(t)\vec u(\cdot,t), \int_{-\infty}^{\infty} \chi_h(t-s)  \vec v(\cdot, s)\,ds -\vec v(\cdot,t) }\,dt=:I(h).
\end{multline*}
Similar to \eqref{eq6.02bk}, we have
\[
\abs{I(h)} \le C \left(\int_{-h}^{t_1+h} \norm{\vec u(\cdot,t)}^2_{H^1(\Omega)}\,dt \right)^{1/2}\left(\int_{-h}^{t_1+h} \norm{\chi_h * \vec v(\cdot,t)-\vec v(\cdot,t)}^2_{H^1(\Omega)}\,dt\right)^{1/2}\to 0
\]
as $h$ tends to zero.
Therefore, from \eqref{jj.eq1b}, we obtain
\[
\frac{1}{2}\norm{\vec u(\cdot,t)}_{L^2(\Omega)}^2\,  \Big|_{t=0}^{t=t_1}+\int_0^{t_1}\!\!\!\int_\Omega A^{\alpha\beta}D_\beta \vec u \cdot D_\alpha \vec u+\int_0^{t_1}\ip{\Theta(t)\vec u(\cdot,t),\vec u(\cdot,t)}=\int_0^{t_1}\!\!\!\int_\Omega \vec f \cdot \vec u.
\]
By using \eqref{eq2.14ar} and Cauchy's inequality, we get  \eqref{enieq} from the above identity.
\hfill\qedsymbol

\subsection{Proof of Lemma~\ref{lem2.2pe}}
By \cite[Lemma~5.3]{GM09}, we see that $\Theta=M_\theta$ satisfies \eqref{eq2.2p} and \eqref{eq2.3p}.
Therefore, we only need to establish the inequality \eqref{2.eq4}.
For this purpose, we first show that there exists $\alpha\in \bR$ such that for a.e. $t\in (-\infty,\infty)$, we have
\begin{equation}		\label{rz4aa}
0\le \alpha \int_\Omega \abs{D\vec u}^2\,dx +\int_{\partial\Omega} \theta(\cdot,t) \vec u \cdot \vec u\,dS_x,\quad \forall \vec u \in H^1(\Omega)^m.
\end{equation}
To see \eqref{rz4aa}, it is enough to show that for a.e. $t\in (-\infty,\infty)$, we have
\[
\inf\Set{\int_{\partial \Omega} \theta(\cdot,t) \vec u\cdot \vec u\,dS_x : \vec u\in H^1(\Omega)^m, \; \norm{D\vec u}_{L^2(\Omega)}=1}>-\infty.
\]
Indeed, for any $\vec u\in H^1(\Omega)^m$ satisfying $\norm{D\vec u}_{L^2(\Omega)}=1$, we write
\[
\vec u=\left(\vec u-\fint_{\partial \Omega}\vec u\right)+\fint_{\partial \Omega}\vec u=:\vec v+\vec c.
\]
Note that there exists a constant $\beta=\beta(n, p, \Omega)$ such that
\begin{equation}		\label{rzz1}
\norm{\vec v}_{L^{2p/(p-1)}(\partial\Omega)}\le  \beta \norm{D \vec v}_{L^2(\Omega)}=\beta,
\end{equation}
where $p$ is as in \eqref{eq2.09ex} and we used $\int_{\partial\Omega} \vec v=0$.
Then by \eqref{eq2.10ex} and \eqref{rzz1}, we have
\begin{align*}
\int_{\partial \Omega} \theta \vec u\cdot \vec u \,dS_x & \ge \int_{\partial \Omega} \theta \vec v \cdot \vec v \,dS_x+ \left( \int_{\partial \Omega} (\theta+\theta^{*})\vec v \,dS_x \right) \cdot \vec c + \delta \abs{\vec c}^2\\
&\ge \int_{\partial \Omega} \theta \vec v \cdot \vec v\,dS_x -\frac{1}{4\delta} \Abs{\int_{\partial \Omega} (\theta+\theta^{*})\vec v \,dS_x}^2\\
&\ge -\norm{\theta}_{\sL_{p,\infty}} \norm{\vec v}^2_{L^{2p/(p-1)}(\partial \Omega)} -\delta^{-1} \norm{\theta}_{\sL_{p,\infty}}^2 \norm{\vec v}^2_{L^{2p/(p-1)}(\partial \Omega)}\abs{\partial\Omega}^{1-1/p}\\
& \ge - \beta^2\left(\norm{\theta}_{\sL_{p,\infty}}+\delta^{-1} \abs{\partial\Omega}^{1-1/p}\norm{\theta}_{\sL_{p,\infty}}^2 \right).
\end{align*}
Therefore, we get the inequality \eqref{rz4aa} with
\begin{equation}	\label{eq6.05apx}
\alpha \le \beta^2\left(\norm{\theta}_{\sL_{p,\infty}}+\delta^{-1} \abs{\partial\Omega}^{1-1/p}\norm{\theta}_{\sL_{p,\infty}}^2 \right)=:\Lambda.
\end{equation}
Next, we claim that for any $\epsilon>0$, there exists $C_\epsilon>0$ such that
\begin{equation}	\label{eq6.06apx}
\norm{\vec u}_{L^2(\Omega)}^2 \le C_\epsilon \left( (\Lambda+\epsilon) \int_\Omega \abs{D\vec u}^2\,dx +\int_{\partial\Omega} \theta(\cdot,t) \vec u \cdot \vec u\,dS_x\right).
\end{equation}
Note that once we establish \eqref{eq6.06apx}, then by \eqref{rz4aa} we would have
\[
\norm{\vec u}_{H^1(\Omega)}^2 \le C_\epsilon' \left( (\Lambda+\epsilon) \int_\Omega \abs{D\vec u}^2\,dx +\int_{\partial\Omega} \theta(\cdot,t) \vec u \cdot \vec u\,dS_x\right)
\]
for some $C_\epsilon'>0$ and the lemma follows by \eqref{eq6.05apx} or nonnegative definiteness of $\theta$.

Finally, we prove \eqref{eq6.06apx} by a usual contradiction argument.
If the stated estimate were false, for each positive integer $k$, there would exist a function $\vec u_k \in H^1(\Omega)^m$ such that $\norm{\vec u_k}_{L^2(\Omega)}=1$ and
\[
\epsilon \int_\Omega \abs{D\vec u_k}^2\,dx \le (\Lambda+\epsilon) \int_\Omega \abs{D\vec u_k}^2\,dx +\int_{\partial\Omega} \theta(\cdot,t) \vec u_k \cdot \vec u_k\,dS_x\ \le \frac{1}{k},
\]
where we used \eqref{rz4aa}.
Therefore, we would have
\begin{equation}		\label{rzz1a}
\lim_{k\to \infty} \int_\Omega \abs{D\vec u_k}^2\,dx= 0=\lim_{k\to \infty} \int_{\partial\Omega} \theta(\cdot,t) \vec u_k \cdot \vec u_k\,dS_x.
\end{equation}
Then by Rellich-Kondrachov compactness theorem, there exists $\vec u \in L^2(\Omega)^m$ such that (by passing to a subsequence) $\vec u_k\to \vec u$ in $L^2(\Omega)^m$. 
Also, it follows from \eqref{rzz1a} that $D \vec u_k \to 0$ in $L^2(\Omega)^m$.
Therefore, we have
\begin{equation}		\label{rz3}
\norm{\vec u}_{L^2(\Omega)}=1, \quad D\vec u= 0, \quad \text{and }\, \vec u_k \to \vec u \quad \text{in }\, H^1(\Omega)^m.
\end{equation}
Then, by combining \eqref{rzz1a} and \eqref{rz3}, we conclude that $\vec u$ is constant and
\[
\int_{\partial\Omega} \theta(\cdot,t) \vec u \cdot \vec u\,dS_x=0,
\]
which contradicts \eqref{eq2.10ex}.
\hfill\qedsymbol

\subsection{Proof of Lemma~\ref{lem2.3pe}}
Without loss of generality, we assume that $a=-T$ and $b=0$ so that $Q=\Omega\times (-T,0)$ and $S=\partial\Omega \times (-T,0)$.
The proof is based on establishing a reverse H\"older inequality and applying a variant of Gehring's lemma as presented in Arkhipova~\cite{Arkhipova99}.
Below, we denote
\[
\delta(X, \partial \bQ)=\inf \set{ \abs{X-Y}_\sP : Y\in \partial \bQ}.
\]
\begin{theorem}[{\cite[Theorem~1]{Arkhipova99}}]		\label{thm_Ark}
Denote by $D_r(x)=\set{y\in \bR^n : \abs{y_i-x_i} <r, i=1,\ldots, n}$ and $\bQ=D_{3/2}(0) \times (-5/4,5/4)$.
Let $g\in \sL_p(\bQ)$, $p>1$, be a nonnegative function.
Suppose that for some $R_0>0$ and for all $X \in \bQ$ the following inequality holds for all $R \le \hat{R}_a=\frac{1}{a} \min\set{R_0, \delta(X, \partial \bQ)}$, where $\delta(X, \partial \bQ)=\inf \set{ \abs{X-Y}_\sP : Y\in \partial \bQ}$:
\begin{equation}		\label{eq6.10unscaled}
\fint_{Q_R(X)} g^p + \psi_R(X) \le \epsilon \left(\fint_{Q_{a R}(X)} g^p + \psi_{a R}(X) \right) + B\left(\fint_{Q_{\alpha R}(X)} g \right)^p,
\end{equation}
where $\epsilon \in (0,1)$, $a>1$, $B>1$,
and $\psi_\rho(X)$ is a nonnegative function defined for all $X \in \bQ$ and $\rho>0$ such that $\sup_\rho \psi_\rho(X) <\infty$ for a.e. $X \in \bQ$ and 
\[
\esssup_{X \in \bQ} \sup_{\set{\rho: Q_\rho(X)\subset \bQ}} \psi_\rho(X) \rho^{n+2}=:m_\psi(\bQ)<\infty.
\]
Then there exist constants $p_0>p$ and $c_0>0$ such that $g \in \sL_{p_0}(\bQ')$ for all $\overline{\bQ'} \subset \bQ$ and
\[
\norm{g}_{\sL_{p_0}(\bQ')} \le c_0 \left\{ \norm{g}_{\sL_p(\bQ)} + m_\psi^{1/p}(\bQ)\right\}.
\]
The constants $p_0$ and $c_0$ depend on $p, n, B, a$, and $\delta$.
In addition, $c_0$ depends on $\dist(\bQ', \partial \bQ)$.
\end{theorem}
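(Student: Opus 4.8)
The statement is a parabolic Gehring--Giaquinta--Modica higher-integrability lemma, and the plan is to prove it by combining a Calderón--Zygmund stopping-time decomposition with a distribution-function iteration, carrying the density $\psi_\rho$ along as an auxiliary term. After a preliminary parabolic dilation I may assume $R_0\simeq\diam\bQ$. Fix $\overline{\bQ'}\subset\bQ$ and interpose a cylinder $\bQ''$ with $\overline{\bQ'}\subset\bQ''\Subset\bQ$; let $d$ be the parabolic distance from $\bQ''$ to $\partial\bQ$ and set $\tau:=d/(4a)$, so that for every $X\in\bQ''$ and $R\le\tau$ the cylinders $Q_R(X)$, $Q_{aR}(X)$, $Q_{\alpha R}(X)$ lie inside $\bQ$ and $R\le\hat R_a$, making \eqref{eq6.10unscaled} applicable. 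Put $G:=g^p\mathbf{1}_{\bQ''}$ and introduce the maximal quantity $\mathcal M(X):=\sup_{0<R\le\tau}\bigl(\fint_{Q_R(X)}G+\psi_R(X)\bigr)$; by Lebesgue differentiation $g^p\le\mathcal M$ a.e.\ on $\bQ'$, and the truncations $\mathcal M_\kappa:=\min(\mathcal M,\kappa)$ belong to $\sL_{1+\sigma}(\bQ)$ for every $\sigma>0$. This a priori finiteness is what will make the absorption step below legitimate: all constants produced will be independent of $\kappa$, and one lets $\kappa\to\infty$ at the very end.

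Next I would run a dyadic stopping-time decomposition, working with a fixed grid of parabolic dyadic cubes (whose $t$-side scales like the square of the spatial side) rather than with arbitrary cylinders, so that stopping cubes are automatically disjoint or nested and only a controlled number of cubes of a given generation can meet a fixed cube --- precisely the bounded-overlap property needed when the cubes are dilated below. Choose $\lambda_0\simeq\fint_\bQ G+m_\psi(\bQ)$ large enough that, for $\lambda>\lambda_0$, the average-plus-density over any cube of the coarsest admissible generation stays $<\lambda$. For $\lambda>\lambda_0$ and each Lebesgue point $X\in\{\mathcal M>\lambda\}\cap\bQ'$ select the largest dyadic cube $Q\ni X$ with $\fint_Q G+\psi_{\ell(Q)}(X)>\lambda$; its parent $\widehat Q$ then satisfies $\fint_{\widehat Q}G+\psi_{\ell(\widehat Q)}\le\lambda$, and, by the choice of $\lambda_0$, $\ell(Q)\le\tau$ --- this is exactly where the \emph{interior} character of the conclusion and the dependence of $c_0$ on $\dist(\bQ',\partial\bQ)$ come from. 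The stopping cubes $\{Q_i\}$ are pairwise disjoint, they cover $\{\mathcal M>\lambda\}\cap\bQ'$ up to a null set, and each is contained in $\{\mathcal M>c(n)\lambda\}$.

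The core step is to feed \eqref{eq6.10unscaled} into each stopping cube: on $Q_i$ it gives $\fint_{Q_i}G+\psi_{\ell(Q_i)}\le\epsilon\lambda+B\bigl(\fint g\bigr)^p$, where the $\epsilon\lambda$ comes from a controlled dilate of $Q_i$ on which all averages are $\le\lambda$ by maximality. Multiplying by $\abs{Q_i}$ and summing, the contributions $\epsilon\lambda\abs{Q_i}$ add up to $\epsilon\lambda\sum_i\abs{Q_i}$, which is absorbed since $\epsilon<1$ (the Giaquinta--Modica hole-filling); note that the density $\psi$ never produces a separate term, because the same inequality already dominates $\psi_{\ell(Q_i)}$ by $\epsilon\lambda+B(\fint g)^p$, so its only effect is the shift of the base level to $\lambda_0$. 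For the $g$-term one splits $g=g\mathbf{1}_{\{g\le\eta\lambda^{1/p}\}}+g\mathbf{1}_{\{g>\eta\lambda^{1/p}\}}$: the first piece contributes $\lesssim B\eta^p\lambda\abs{Q_i}$, absorbed after fixing $\eta$ small depending on $B$, while the second, by Jensen ($p>1$) and the bounded overlap of the dilates, contributes $\lesssim\int_{\{g^p>\eta^p\lambda\}\cap\bQ''}g^p$. Collecting terms yields a distribution-function inequality
\[
\int_{\{\mathcal M>\lambda\}\cap\bQ'}\mathcal M\,dX\le C_1\lambda\,\bigabs{\{\mathcal M>c\lambda\}\cap\bQ''}+C_2\int_{\{G>c'\lambda\}\cap\bQ''}G\,dX,\qquad\lambda>\lambda_0,
\]
with $C_1,C_2,c,c'$ depending only on $n,p,B,a,\alpha,\epsilon$.

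Finally I would iterate. Replacing $\mathcal M$ by $\mathcal M_\kappa$, multiplying by $\lambda^{\sigma-1}$ for a small $\sigma>0$, integrating in $\lambda$ over $(\lambda_0,\infty)$, and applying Fubini together with $G\le\mathcal M_\kappa$, one converts both sides into $\sL_{1+\sigma}$-quantities; after absorbing the self-improving term on the right (which forces $\sigma$ to be small, exactly as in the Giaquinta--Modica lemma) one is left with $\norm{\mathcal M_\kappa}_{\sL_{1+\sigma}(\bQ')}^{1+\sigma}\le C\bigl(\norm{G}_{\sL_1(\bQ)}^{1+\sigma}+m_\psi(\bQ)^{1+\sigma}\bigr)$ with $C$ independent of $\kappa$, and monotone convergence removes the truncation. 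Since $g^p\le\mathcal M$ a.e.\ on $\bQ'$, this reads $\norm{g}_{\sL_{p_0}(\bQ')}\le C\bigl(\norm{g}_{\sL_p(\bQ)}+m_\psi(\bQ)^{1/p}\bigr)$ with $p_0=p(1+\sigma)$, which is the claim; the additive $m_\psi^{1/p}$ is exactly the trace of the base level $\lambda_0$. I expect the bookkeeping of this last iteration --- the $\kappa$-uniform absorption, and the simultaneous control of the two dilation radii $aR$ and $\alpha R$ against the admissible range $\tau$ throughout the covering --- to be the main technical obstacle; it is classical (Gehring, Giaquinta--Modica), and in this parabolic formulation is precisely \cite[Theorem~1]{Arkhipova99}.
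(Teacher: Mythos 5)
The paper does not prove this result; it is stated with an explicit attribution and imported as a black box (``\cite[Theorem~1]{Arkhipova99}''), and the subsequent discussion only records a rescaled version of its conclusion. You, by contrast, sketch a proof from scratch, though you ultimately also defer to Arkhipova for the details. Since the paper's ``proof'' is a citation and you attempt the argument, the two are trivially different routes; what is worth saying is whether your sketch would actually work.

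Your outline follows the standard Gehring--Giaquinta--Modica scheme (stopping time, distribution-function inequality, absorption at a base level $\lambda_0$ that records $m_\psi$, then integration in $\lambda$), which is the right strategy. There is, however, a genuine gap in the stopping-time step as you have set it up. You propose to run a \emph{dyadic} decomposition and to stop at ``the largest dyadic cube $Q\ni X$ with $\fint_Q G+\psi_{\ell(Q)}(X)>\lambda$.'' But $\psi_\rho(X)$ is a pointwise quantity tied to the \emph{center} $X$, not a cube functional: two points $X\neq X'$ in the same dyadic cube $Q$ generally satisfy $\psi_{\ell(Q)}(X)\neq\psi_{\ell(Q)}(X')$, so the stopping rule is not a rule on cubes and the resulting family $\{Q_i\}$ is not well-defined (different points in $Q_i$ may want different stopping generations). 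Likewise, when you try to invoke hypothesis \eqref{eq6.10unscaled} ``on $Q_i$,'' that hypothesis is formulated for concentric cylinders $Q_R(X)\subset Q_{aR}(X)\subset Q_{\alpha R}(X)$ centered at the same point $X$, whereas the parent dyadic cube of $Q_i$ is not a concentric dilate of $Q_i$ about any point of $\{\mathcal M>\lambda\}$. The fix is not cosmetic: one should replace the dyadic decomposition by a Vitali/Besicovitch-type covering of $\{\mathcal M>\lambda\}\cap\bQ'$ with \emph{centered} parabolic cylinders $Q_{R_X}(X)$ chosen by a stopping rule in $R$ (using the upper bound $\hat R_a$), pass to a disjoint (or bounded-overlap) subfamily, and then apply \eqref{eq6.10unscaled} centered at each selected $X$; this is precisely where the dependence of $c_0$ on $a$ and on $\dist(\bQ',\partial\bQ)$ enters, and it is what Arkhipova's modification of Gehring's lemma is designed to handle. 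With that replacement the rest of your argument (truncation $\mathcal M_\kappa$ for a priori $\sL_{1+\sigma}$ membership, splitting $g$ at level $\eta\lambda^{1/p}$, Fubini in $\lambda$, $\kappa$-uniform absorption) is the standard machinery and should go through.
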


One can reformulate Theorem~\ref{thm_Ark} with the cube $\bQ$ replaced by a cube
\[
\bP_r(X_0):=D_r(x_0) \times (t_0-r^2, t_0+r^2).
\]
Indeed, by scaling, one can see that if inequality \eqref{eq6.10unscaled} holds for all $X\in \bP_r(X_0)$ and $R \le \frac{1}{a} \min \set{r R_0, \delta(X, \partial \bP_r)}$, then
we have
\begin{equation}		\label{rvhscaled}
\left(\fint_{\bP_{r/2}(X_0)} \abs{g}^{p_0}\right)^{1/p_0}  \le c_0 \left\{ \left(\fint_{\bP_r(X_0)} \abs{g}^p\right)^{1/p} + r^{-(n+2)/p} m_\psi^{1/p}(\bP_r(X_0))\right\}.
\end{equation}
Recall the notations \eqref{lcd}.
Hereafter in the proof, we also use the notation
\[
\vec u_\rho=\vec u_{X,\rho}=\fint_{U^{-}_\rho(X)} \vec u.
\]
Let $\vec u$ be a weak solution of \eqref{eqj.1b}.
We claim that for any $\epsilon \in (0,1)$, there exist $r_0>0$ and $C>0$ such that for all $X=(x,t)\in \overline \Omega \times (-T,0]$ and $0<\rho <\min( \sqrt{t+T}, r_0)$, we have 
\begin{multline}			\label{eq6.11sleepy}		
\int_{U^{-}_{\rho/2}(X)} \left(\abs{D\vec u}^2+\abs{\vec u}^2+(\rho/2)^{-2}\abs{\vec u-\vec u_{\rho/2}}^2\right)\\
\le \epsilon \int_{U^{-}_\rho(X)} \left(\abs{D\vec u}^2+\abs{\vec u}^2+\rho^{-2}\abs{\vec u-\vec u_\rho}^2\right)
+C \rho^{(n+2)(1-\frac{2}{q})}\left(\int_{U^{-}_\rho(X)} \abs{D\vec u}^q+\abs{\vec u}^q\right)^{\frac{2}{q}},
\end{multline}
where $q=2n/(n+2)$ if $n\ge 3$ and $1<q<3/2$ if $n=2$.

Take the claim for now.
For $x_0 \in \Omega$, write $\hat{X}_0=(x_0,0)$ and consider $\bP_r=\bP_r(\hat{X}_0)$, where $r< \min(\sqrt{T}, r_0)$ is fixed.
We define
\[
g(X)=\left(\abs{D\vec u}^2+\abs{\vec u}^2\right)^{\frac{q}{2}}(X) \quad \text{for }\, X\in Q \cap \bP_r
\]
and extend it by zero on $\bP_r \setminus Q$.
Also, for any $X\in \bR^{n+1}$ and $\rho>0$, we define
\[
\psi_\rho(X)=\rho^{-n-4}\int_{U_\rho(X)} \abs{\vec u - \hat{\vec u}_{X,\rho}}^2,\quad\text{where }\; \hat{\vec u}_{X,\rho}=\fint_{U_\rho(X)} \vec u.
\]
It should be understood that $\psi_\rho(X)=0$ if $U_\rho(X)=\emptyset$.
Note that if $U_\rho(X) \neq \emptyset$, then there exist $X'=(x',t')$ such that $\abs{X-X'}_\sP <\rho$ and $X'\in Q$.
If we denote $Y=(x', s)$, where $s=\min(t+\rho^2, 0)$, then $Y \in \overline{U_\rho(X)}$ and $U_\rho(X) \subset U_{2\rho}^-(Y)$, and thus we have
\[
\psi_\rho(X) \le  4 \rho^{-n-4}\int_{U_\rho(X)}\abs{\vec u-\vec u_{Y,2\rho}}^2 
\le 4 \rho^{-n-4}\int_{U_{2\rho}^{-}(Y)}\abs{\vec u-\vec u_{Y,2\rho}}^2
\]
and since $U_{4\rho}(Y) \subset U_{5\rho}(X)$, we also have
\[
\rho^{-n-4}\int_{U^{-}_{4\rho}(Y)}\abs{\vec u-\vec u_{Y,4\rho}}^2\le 
4\rho^{-n-4}\int_{U^{-}_{4\rho}(Y)}\abs{\vec u- \hat{\vec u}_{X,5\rho}}^2\le 
4 \psi_{5\rho}(X).
\]
Therefore,  for all $X\in \bP_r$ and $\rho<\frac{1}{5} \min(\delta(X,\partial \bP_r), r)$ satisfying $U_\rho(X)\neq \emptyset$, we get from  \eqref{eq6.11sleepy}
\begin{align}
\nonumber
\fint_{Q_\rho(X)}g^{2/q}+\psi_\rho(X)&\le \frac{C}{\rho^{n+2}}\int_{U^{-}_{2\rho}(Y)}\left(g^{2/q}+(2\rho)^{-2}\abs{\vec u-\vec u_{Y,2\rho}}^2\right) \\
\nonumber
&\le \frac{C \epsilon}{\rho^{n+2}}\int_{U^{-}_{4\rho}(Y)}\left(g^{2/q}+(4\rho)^{-2} \abs{\vec u-\vec u_{Y,4\rho}}^2\right)+\frac{C}{\rho^{2(n+2)/q}}\left(\int_{U^{-}_{4\rho}(Y)}g\right)^{2/q}\\
\label{plseq1}
&\le C \epsilon \left(\fint_{Q_{5\rho}(X)}g^{2/q}+\psi_{5\rho}(X)\right)+C\left(\fint_{Q_{5\rho}(X)}g\right)^{2/q}.
\end{align}
On the other hand, if $U_\rho(X)=\emptyset$, then we have 
\begin{equation}		\label{plseq2}
\fint_{Q_\rho(X)}g^{2/q}+\psi_\rho(X)=0.
\end{equation}
By \eqref{plseq1} and \eqref{plseq2}, we get for any $X\in \bP_r$ and $\rho<\frac{1}{5}\min (\delta(X,\partial \bP_r),r)$ that
\[
\fint_{Q_\rho(X)}g^{2/q}+\psi_\rho(X)\le \delta\left(\fint_{Q_{8\rho}(X)}g^{2/q}+\psi_{8\rho}(X)\right)+C\left(\fint_{Q_{8\rho}(X)}g\right)^{2/q},
\]
for some $\epsilon \in (0,1)$.
On the other hand, note that
\[
\psi_\rho(X)\rho^{n+2}\le C\rho^{-2}\int_{U_\rho(X)}\abs{\vec u}^2 \le C\norm{\vec u}_{\sL_{2,\infty}(U_\rho(X))}^2,
\]
and thus, we have 
\[
m_\psi(\bP_r) \le C\norm{\vec u}_{\sL_{2,\infty}(\bP_r(\hat{X_0})\cap Q)}^2.
\]
Then, we take $a=5$, $R_0=1$ and apply the scaled version of Theorem~\ref{thm_Ark} to get via \eqref{rvhscaled} that
\[
\left(\fint_{\bP_{r/2}(\hat{X}_0)}g^{p_0}\,dX\right)^{1/p_0}\le C\left(\fint_{\bP_r(\hat{X}_0)}g^{2/q}\,dX\right)^{q/2}+C\left(r^{-(n+2)}\norm{\vec u}^2_{\sL_{2,\infty}(\bP_r(\hat{X}_0) \cap Q)}\right)^{q/2},
\]
where $p_0>2/q$.
Therefore, by setting $q_0=p_0q >2$ and using a usual covering argument, we obtain \eqref{drowsy}.

It only remains to establish \eqref{eq6.11sleepy}.
Hereafter, we shall denote
\[
Q'=\Omega \times (t_0-r^2,t_0)\quad\text{and}\quad S'=\partial\Omega \times (t_0-r^2,t_0).
\]
Fix $\kappa \in (0,1)$ and a function $\tau \in C^\infty_c(\bR)$ such that
\[
0\le \tau \le 1, \quad \tau(t)=0\; \text{ for }\;\abs{t-t_0} \ge r^2, \quad \tau(t)=1\; \text{ for } \; \abs{t-t_0} \le (\kappa r)^2, \quad \abs{\tau'}\le C r^{-2}
\]
and also a function $\zeta \in C^\infty_c(\bR^n)$ such that
\[
0\le \zeta \le 1, \quad \zeta(x)=0\;\text{ for }\; \abs{x-x_0} \ge  r,\quad \zeta(x)=1\; \text{ on }\; \abs{x-x_0}\le \kappa r, \quad \abs{D \zeta}\le C r^{-1}.
\]
Denote
\[
\tilde{\vec u}_r(t)=\int_{\Omega_r(x_0)}\zeta^2(x) \vec u(x,t)\,dx \,\Big/ \int_{\Omega_r(x_0)}\zeta^2\,dx.
\]
By testing with 
\[
\eta(x,t)=\zeta^2(x)\tau^2(t) \left\{\vec u(x,t)-\tilde{\vec u}_r(t)\right\}
\]
in \eqref{eqj.1b} and using that
\[
\int_{\Omega_r(x_0)} \zeta^2 (\vec u-\tilde{\vec u}_r(t))\,dx=0,
\]
we obtain for a.e. $s\in (-T,t_0)$ that 
\begin{align*}
0&=\frac{1}{2}\int_{\Omega}\zeta^2\tau^2\Abs{\vec u-\tilde{\vec u}_r(s)}^2\,dx+\int_{-T}^s\!\int_\Omega\zeta^2 \tau^2 A^{\alpha\beta}D_\beta \vec u\cdot D_\alpha \vec u\,dx\,dt\\
&+\int_{-T}^s\!\int_{\Omega}\left\{-\zeta^2\tau\tau' \Abs{\vec u-\tilde{\vec u}_r(t)}^2+2\zeta D_\alpha \zeta \tau^2 A^{\alpha\beta}D_\beta \vec u\cdot \left(\vec u-\tilde{\vec u}_r(t)\right)\right\}\,dx\,dt\\
&+\int_{-T}^s\!\int_{\partial\Omega}\zeta^2\tau^2\theta \vec u\cdot\left(\vec u-\tilde{\vec u}_r(t)\right)\,dS_x\,dt.
\end{align*}
Therefore, by Cauchy's inequality, we have
\begin{multline}		\label{9eq1a}
\esssup_{t_0-r^2<s<t_0}\int_{\Omega}\zeta^2 \tau^2 \Abs{\vec u-\tilde{\vec u}_r(s)}^2 \,dx+\int_{Q'} \zeta^2 \tau^2 \abs{D\vec u}^2\,dx\,dt\\
\le Cr^{-2}\int_{U^{-}_r} \Abs{\vec u-\tilde{\vec u}_r(t)}^2\,dx\,dt+C\int_{S'} \zeta^2 \tau^2 \abs{\vec u}\abs{\vec u-\tilde{\vec u}_r(t)}\,dS_x\,dt.
\end{multline}
We recall the embedding inequality \eqref{eq2.16wine}.
For any $\epsilon \in (0,1)$, we have
\begin{align}
\nonumber
\int_{S'}\zeta^2 \tau^2 \abs{\vec u} \abs{\vec u-\tilde{\vec u}_r(t)}\,dS_x \,dt
&\le \norm{\zeta \tau \vec u }_{\sL_{2(n+1)/(n+2)}(S^{-}_r)} \;\norm{\zeta \tau(\vec u-\tilde{\vec u}_r)}_{\sL_{2(n+1)/n}(S')}\\ 
\nonumber
&\le  C\norm{\zeta \tau \vec u}_{\sL_{2(n+1)/(n+2)}(S^{-}_r)}\;\tri{\zeta \tau (\vec u-\tilde{\vec u}_r)}_{Q'}\\
\label{j27.eq1}
&\le C_\epsilon \norm{\zeta \tau \vec u}^2_{\sL_{2(n+1)/(n+2)}(S^{-}_r)}+\epsilon \tri{\zeta \tau (\vec u-\tilde{\vec u}_r)}_{Q'}^2.
\end{align}
In the above and below, $C_\epsilon$ denotes a constant that depends on $\epsilon$.
Therefore, by combining \eqref{9eq1a} and \eqref{j27.eq1}, we get
\begin{multline}		\label{9eq1c}
\esssup_{t_0-r^2<s<t_0}\int_{\Omega}\zeta^2 \tau^2 \abs{\vec u-\tilde{\vec u}_r(s)}^2\,dx+\int_{Q'}\zeta^2 \tau^2 \abs{D\vec u}^2\,dx\,dt\\
\le Cr^{-2}\int_{U^{-}_r} \abs{\vec u-\tilde{\vec u}_r(t)}^2\,dx\,dt
+C\norm{\zeta \tau \vec u }_{\sL_{2(n+1)/(n+2)}(S^{-}_r)}^2.
\end{multline}
Let $p=2$ if $n\ge 3$ and $p \in (6/5,2)$ if $n=2$, and set
\[
p^*=np/(n-p)\quad \text{and}\quad q=p^*/(p^*-1).
\]
Note that $q=2n/(n+2)$ if $n\ge 3$ and $1<q<3/2$ if $n=2$.

By H\"older's inequality together with properties of $\zeta$ and $\tau$, we get
\begin{align*}
\norm{\zeta \tau \vec u}_{\sL_{2(n+1)/(n+2)}(S^{-}_r)}^2
& \le 2\norm{\zeta\tau(\vec u-\vec u_r)}_{\sL_{2(n+1)/(n+2)}(S^{-}_r)}^2+ 2\abs{\vec u_r}^2  \abs{S^{-}_r}^{(n+2)/(n+1)}\\
&\le Cr^{2+2(n+2)(\frac{1}{2}-\frac{1}{p})} \norm{\zeta(\vec u- \vec u_r)}_{\sL_{p(n-1)/(n-p),p}(S')}^2+
Cr^{(n+2)(1-\frac{2}{q})}\left(\int_{U^{-}_r} \abs{\vec u}^q\right)^{\frac{2}{q}}.
\end{align*}
On the other hand, by trace Sobolev inequality, we get
\begin{align*}
\norm{\zeta(\vec u- \vec u_r)}_{\sL_{p(n-1)/(n-p),p}(S')}^2 
&\le C \left\{\norm{\zeta(\vec u- \vec u_r)}_{\sL_p(Q')}+\norm{D\zeta(\vec u- \vec u_r)}_{\sL_p(Q')} + \norm{\zeta D\vec u}_{\sL_p(Q')}\right\}^2 \\
&\le C r^{2(n+2)(\frac{1}{2}-\frac{1}{p})}\left(\int_{U^{-}_r}\abs{D\vec u}^2+r^{-2}\abs{\vec u-\vec u_r}^2\,dX \right).
\end{align*}
Therefore, we get from the above two inequalities that
\begin{equation}			\label{eq6.15stupid}
\norm{\zeta \tau \vec u}_{\sL_{2(n+1)/(n+2)}(S^{-}_r)}^2\le C r^2 \left\{\int_{U^{-}_r}\abs{D\vec u}^2+r^{-2}\abs{\vec u-\vec u_r}^2\,dX\right\} +Cr^{(n+2)(1-\frac{2}{q})}\left(\int_{U^{-}_r} \abs{\vec u}^{q} \right)^{\frac{2}{q}}.
\end{equation}

\begin{lemma}
For $\epsilon\in (0,1)$, there exists $r_1>0$ such that for $r<\min(r_1,\sqrt{T})$, we have
\begin{multline}		\label{bored}
r^{-2}\int_{U^{-}_{\kappa r}}\abs{\vec u-\tilde{\vec u}_{\kappa r}(t)}^2\,dx\,dt
\le \epsilon \left\{\int_{U^{-}_r}\abs{D\vec u}^2+ r^{-2} \abs{\vec u-\vec u_r}^2\,dX\right\}\\
+C_\epsilon r^{(n+2)(1-\frac{2}{q})}\left\{\int_{U^{-}_r} \abs{\vec u}^q + \abs{D\vec u}^q \,dX \right\}^{\frac{2}{q}}.
\end{multline}
\end{lemma}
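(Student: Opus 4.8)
The plan is to prove \eqref{bored} as a frozen-time Poincar\'e inequality, the short time slab being the source of the small constant $\epsilon$. After normalizing $t_0=0$, the only geometric input needed is that, since $\zeta\equiv 1$ on $B_{\kappa r}(x_0)$, the average $\tilde{\vec u}_{\kappa r}(t)$ coincides on $\Omega_{\kappa r}(x_0)$ with the plain spatial mean $\fint_{\Omega_{\kappa r}(x_0)}\vec u(\cdot,t)$ and is therefore the $L^2$-best constant at each time $t$; and that for $r<r_1$ the sets $\Omega_{\kappa r}(x_0)$ are uniformly Lipschitz, so the Poincar\'e and trace/Sobolev constants on them are scale-invariant — this is the only role played by the Lipschitz hypothesis and the threshold $r_1$ in this lemma. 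In particular the weak formulation of \eqref{eqj.1b} is not really used for the bound as stated; it enters only if one insists on matching the precise centering of the neighbouring estimates.

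The heart of the argument is thus a fixed-time estimate. At a.e.\ $t$, Poincar\'e's inequality on $\Omega_{\kappa r}$ gives $\int_{\Omega_{\kappa r}}\abs{\vec u(\cdot,t)-\tilde{\vec u}_{\kappa r}(t)}^2\,dx\le C(\kappa r)^2\int_{\Omega_{\kappa r}}\abs{D\vec u(\cdot,t)}^2\,dx$, valid for every $n\ge 2$; equivalently one may use its Poincar\'e--Sobolev form $\int_{\Omega_{\kappa r}}\abs{\vec u(\cdot,t)-\tilde{\vec u}_{\kappa r}(t)}^2\,dx\le C\bigl(\int_{\Omega_{\kappa r}}\abs{D\vec u(\cdot,t)}^q\,dx\bigr)^{2/q}$ with the sub-quadratic exponent $q$ ($q=2n/(n+2)$ for $n\ge 3$, $q\in(1,3/2)$ for $n=2$, so that the Sobolev target exponent is $\ge 2$), the two being linked by a single H\"older step in $x$ since $\bigl(\int_{\Omega_{\kappa r}}\abs{D\vec u(\cdot,t)}^q\bigr)^{2/q}\le C(\kappa r)^{2}\int_{\Omega_{\kappa r}}\abs{D\vec u(\cdot,t)}^{2}$. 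Integrating over the slab of length $(\kappa r)^2$ and using $U^{-}_{\kappa r}\subset U^{-}_r$ yields
\[
r^{-2}\int_{U^{-}_{\kappa r}}\abs{\vec u-\tilde{\vec u}_{\kappa r}(t)}^2\,dX\le C\kappa^{2}\int_{U^{-}_{r}}\abs{D\vec u}^{2}\,dX .
\]
Since $\kappa$ enters the proof of Lemma~\ref{lem2.3pe} only through the cut-offs $\zeta,\tau$ and the passage to \eqref{drowsy} is restored by a finite covering, one is free to pick $\kappa$ with $C\kappa^{2}\le\epsilon$, which produces the leading $\epsilon$-term. The remaining term $C_\epsilon r^{(n+2)(1-2/q)}\bigl(\int_{U^{-}_r}\abs{\vec u}^q+\abs{D\vec u}^q\bigr)^{2/q}$ is then free: because $r<1$ and $1-2/q<0$ its prefactor is $\ge 1$, so it already dominates $\bigl(\int_{U^{-}_{\kappa r}}\abs{D\vec u}^q\bigr)^{2/q}$ and can absorb whatever sub-quadratic remainder one chooses to carry.

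If instead one wants the estimate tied precisely to the Caccioppoli-type inequalities \eqref{9eq1c}, \eqref{eq6.15stupid} — which centre at $\tilde{\vec u}_r(t)$ and at $\vec u_r$ rather than at $\tilde{\vec u}_{\kappa r}(t)$ — one must also control $\tilde{\vec u}_{\kappa r}(t)-\vec u_r$, and this is where \eqref{eqj.1b} is invoked: testing componentwise with $\zeta^{2}(x)\eta(t)$ shows $t\mapsto\int_\Omega\zeta^{2}\vec u(\cdot,t)$ has weak time-derivative $-\int 2\zeta D_\alpha\zeta\,A^{\alpha\beta}D_\beta\vec u-\int_{\partial\Omega}\theta\zeta^{2}\vec u$, so $\abs{\tfrac{d}{dt}\tilde{\vec u}_{\kappa r}(t)}\le C r^{-n-1}\int_{\Omega_{\kappa r}}\abs{D\vec u}+C\norm{\theta}_{\infty}\,r^{-n}\int_{\Sigma_{\kappa r}}\abs{\vec u}$; integrating over the slab, using H\"older in $t$ and in $x$ with exponent $q$, and routing the boundary integral through the trace part of \eqref{eq5.31cr} after writing $\vec u=(\vec u-\vec u_r)+\vec u_r$ turns the flux term into exactly the $C_\epsilon r^{(n+2)(1-2/q)}(\cdots)^{2/q}$ contribution and the boundary term into a multiple — again small with $\kappa$ — of $\int_{U^{-}_r}\abs{D\vec u}^{2}+r^{-2}\abs{\vec u-\vec u_r}^{2}$. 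The one genuinely delicate point is this last step: making the Robin boundary term land in the lower-order $L^q$ piece, which requires $\theta\in\sL_\infty$ and the localized trace inequality, \emph{without} generating an $L^2$-gradient contribution carrying a fixed (non-small) constant; everything else is exponent bookkeeping.
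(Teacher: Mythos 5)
Your proposal has a genuine gap, and the gap is precisely where the $\epsilon$ comes from.

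The lemma asserts that for \emph{each} $\epsilon\in(0,1)$ there is an $r_1(\epsilon)$ so that the estimate holds for $r<\min(r_1,\sqrt{T})$. Your main argument applies a fixed-time Poincar\'e inequality on $\Omega_{\kappa r}$ and then integrates in $t$, which produces
\[
r^{-2}\int_{U^{-}_{\kappa r}}\abs{\vec u-\tilde{\vec u}_{\kappa r}(t)}^2\,dX \le C_{P}\,\kappa^{2}\int_{U^{-}_{r}}\abs{D\vec u}^{2}\,dX,
\]
a \emph{fixed} constant $C_P\kappa^2$ that does not shrink as $r$ decreases. There is no mechanism in this argument that makes the constant small for small $r$, so it cannot deliver an arbitrary $\epsilon$ with $r_1$ depending on $\epsilon$. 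Your proposed remedy, ``choose $\kappa$ with $C_P\kappa^2\le\epsilon$,'' is circular: $\kappa$ is fixed at the beginning of the proof of Lemma~\ref{lem2.3pe} (it determines the cut-offs $\zeta,\tau$ and the slabs $U^{-}_{\kappa r}$, $U^{-}_{r/\kappa}$), and the lemma under discussion, along with \eqref{hee}--\eqref{hyun} where it is actually used, must hold for arbitrary $\epsilon$ \emph{with that $\kappa$ held fixed}, since in deriving \eqref{hee}--\eqref{hyun} the $\epsilon$ gets multiplied by $\kappa$-dependent constants before Gehring's lemma is applied.

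The paper's mechanism for producing an arbitrary $\epsilon$ is different and, contrary to your assertion, uses the equation essentially. The starting point is Cauchy--Schwarz in time, $\int_{U^-_{\kappa r}}\abs{\vec u-\tilde{\vec u}_r}^2 \le \norm{\vec u-\tilde{\vec u}_r}_{\sL_{2,\infty}}\norm{\vec u-\tilde{\vec u}_r}_{\sL_{2,1}}$. The $\sL_{2,\infty}$ factor is controlled by the Caccioppoli-type inequality \eqref{9eq1c}, which is derived by testing the PDE \eqref{eqj.1b}; so the weak formulation is used, not incidental. The $\sL_{2,1}$ factor is processed through the spatial interpolation $\norm{f}_{L^2}\le\norm{f}_{L^{p^*}}^{1/2}\norm{f}_{L^q}^{1/2}$, the Sobolev inequality \eqref{eq5.31cr}, a Poincar\'e inequality, and H\"older in $t$, producing a term proportional to $\norm{D\vec u}_{\sL_2(U^{-}_r)}^{1/2}\norm{D\vec u}_{\sL_q(U^{-}_r)}^{1/2}$. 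Multiplying the two factors gives a product of the form $(\dots)^{3/2}\norm{D\vec u}_{\sL_q}^{1/2}$, to which Young's inequality is applied with conjugate exponents $4/3$ and $4$: this is where the free small parameter $\epsilon$ enters, with the cost of the $C_\epsilon r^{(n+2)(1-2/q)}\norm{D\vec u}_{\sL_q}^2$ remainder. Finally, \eqref{eq6.15stupid} is substituted to eliminate the boundary term $\norm{\zeta\tau\vec u}_{\sL_{2(n+1)/(n+2)}}$, and it is exactly here that $r_1$ appears: one needs $C r^2\le 1$ to absorb the resulting $L^2$-gradient contribution into the $\epsilon$-term. Your reading that $r_1$ encodes uniformity of Poincar\'e/trace constants on small balls is therefore also off. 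In short, the Poincar\'e inequality alone supplies a fixed constant; the arbitrary $\epsilon$ needs the $\sL_{2,\infty}\times\sL_{2,1}$ factorization, the Caccioppoli estimate from the PDE, the splitting of $\norm{D\vec u}$ between $L^2$ and $L^q$ norms, and a Young step, none of which your main argument contains. Your closing paragraph does gesture at the right issue (the Robin term must not generate an $L^2$-gradient piece with a non-small constant), but still leans on ``small $\kappa$'' for the decisive step, which does not work here.
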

\begin{proof}
By H\"older's inequality, \eqref{9eq1c}, and a variant of Poincar\'e inequality, we get
\begin{align*}
\int_{U^{-}_{\kappa r}} \Abs{\vec u-\tilde{\vec u}_r(t)}^2 dx\,dt
&\le \norm{\vec u-\tilde{\vec u}_r}_{\sL_{2,\infty}(U^{-}_{\kappa r})} \norm{\vec u-\tilde{\vec u}_r}_{\sL_{2,1}(U^{-}_{\kappa r})}\\
&\le C\left\{\norm{D\vec u}_{\sL_2(U^{-}_r)}+\norm{\zeta \tau \vec u}_{\sL_{2(n+1)/(n+2)}(S^{-}_r)}\right\} \norm{\vec u-\tilde{\vec u}_r}_{\sL_{2,1}(U^{-}_{\kappa r})}.
\end{align*}
Also, by H\"older's inequality, \eqref{eq5.31cr}, and a variant of Poincar\'e inequality, we have
\begin{align*}
\norm{\vec u-\tilde{\vec u}_r}_{\sL_{2,1}(U^{-}_{\kappa r})}
&\le \int_{t_0-(\kappa r)^2}^{t_0} \norm{\vec u-\tilde{\vec u}_r}_{L^{p^*}(\Omega_{\kappa r})}^{\frac{1}{2}}\norm{\vec u-\tilde{\vec u}_r}_{L^q(\Omega_{\kappa r})}^{\frac{1}{2}}\;dt\\
&\le Cr^{\frac{1}{2}}\int_{t_0-(\kappa r)^2}^{t_0}\left\{ r^{-1}\norm{\vec u-\tilde{\vec u}_r}_{L^p(\Omega_r)}+\norm{D\vec u}_{L^p(\Omega_r)}\right\}^{\frac{1}{2}} \norm{D\vec u}^{\frac{1}{2}}_{L^q(\Omega_{\kappa r})}\;dt\\
&\le Cr^{\frac{1}{2}}\int_{t_0-r^2}^{t_0}\norm{D\vec u}^{\frac{1}{2}}_{L^p(\Omega_r)} \,\norm{D\vec u}^{\frac{1}{2}}_{L^q(\Omega_r)}\;dt\\
&\le Cr^{2+\frac{n}{2p}-\frac{n}{4}-\frac{1}{q}} \norm{D\vec u}^{\frac{1}{2}}_{\sL_2(U^{-}_r)} \norm{D\vec u}^{\frac{1}{2}}_{\sL_q(U^{-}_r)}.
\end{align*}
By combining the above two estimates and using Young's inequality, we get 
\begin{multline*}
r^{-2} \int_{U^{-}_{\kappa r}}\abs{\vec u-\tilde{\vec u}_r(t)}^2\,dx\,dt
\le C r^{\frac{n}{2p}-\frac{n}{4}-\frac{1}{q}} \left\{\norm{D\vec u}_{\sL_2(U^{-}_r)}+\norm{\zeta \tau \vec u}_{\sL_{2(n+1)/(n+2)}(S^{-}_r)}\right\}^{\frac{3}{2}} \norm{D\vec u}^{\frac{1}{2}}_{\sL_q(U^{-}_r)}\\
\le \frac{\epsilon}{4}\left\{\norm{D\vec u}_{\sL_2(U^{-}_r)}+\norm{\zeta \tau \vec u}_{\sL_{2(n+1)/(n+2)}(S^{-}_r)}\right\}^2+ C_\epsilon r^{n+2-\frac{2(n+2)}{q}} \norm{D\vec u}_{\sL_q(U^{-}_r)}^2,
\end{multline*}
where we used that $1/p=1+1/n-1/q$.
Therefore, by \eqref{eq6.15stupid}, we find that for any $\epsilon \in (0,1)$, there exists $r_1$ such that for all $0<r<r_1$, we have
\begin{multline}		\label{eq6.19humid}
r^{-2}\int_{U^{-}_{\kappa r}}\abs{\vec u-\tilde{\vec u}_r(t)}^2\,dx\,dt
\le \epsilon \left\{\int_{U^{-}_r}\abs{D\vec u}^2+ r^{-2} \abs{\vec u-\vec u_r}^2\,dX\right\}\\
+C_\epsilon r^{(n+2)(1-\frac{2}{q})}\left\{\int_{U^{-}_r} \abs{\vec u}^q + \abs{D\vec u}^q \,dX \right\}^{\frac{2}{q}}.
\end{multline}
\\
Note that by the properties of $\zeta$, we have  
\[
c \int_{\Omega_{\kappa r}\times \set{s}} \abs{\vec u-\tilde{\vec u}_{\kappa r}(s)}^2\,dx 
\le  \int_{\Omega_{\kappa r}\times \set{s}} \abs{\vec u-\vec u_{\kappa r}(s)}^2\,dx 
\le \int_{\Omega_r \times \set{s}}  \zeta^2 \abs{\vec u-\tilde{\vec u}_r(s)}^2\,dx. 
\]
Therefore, we get \eqref{bored} from \eqref{eq6.19humid} and the above inequality.
\end{proof} 

By replacing $r$ by $r/\kappa$, we derive the following inequality from \eqref{9eq1c} -- \eqref{bored}:
\begin{multline}			\label{hee}
\int_{U^{-}_{\kappa r}} \abs{D \vec u}^2 \,dX \le 
\epsilon \left\{\int_{U^{-}_{r/\kappa}}\abs{D\vec u}^2+ (r/\kappa)^{-2} \abs{\vec u-\vec u_{r/\kappa}}^2\,dX\right\}\\
+C_\epsilon  r^{(n+2)(1-\frac{2}{q})}\left\{\int_{U^{-}_{r/\kappa}} \abs{\vec u}^q + \abs{D\vec u}^q \,dX \right\}^{\frac{2}{q}}.
\end{multline}
Next, note that
\begin{align}
\nonumber
\int_{U^{-}_{\kappa r}}\abs{\vec u}^2\,dX&\le 2 \int_{U^{-}_{r/\kappa}} \abs{\vec u- \vec u_{r/\kappa}}^2\,dX+ C r^{n+2} \abs{\vec u_{r/\kappa}}^2 \\
\label{dong}
&\le C r^2 \int_{U^{-}_{r/\kappa}} (r/\kappa)^{-2}\abs{\vec u-\vec u_{r/\kappa}}^2\,dX+Cr^{(n+2)(1-\frac{2}{q})}\left(\int_{U^{-}_{r/\kappa}}\abs{\vec u}^q\,dX\right)^{\frac{2}{q}},
\end{align}
Next, by Lemma~\ref{13-lem1} and \eqref{hee}, we get
\begin{align}
\nonumber
r^{-2}\int_{U^{-}_{\kappa r}} \abs{\vec u-\vec u_{\kappa r}}^2\,dX
&\le C\int_{U^{-}_{\kappa r}}\abs{D\vec u}^2\,dX+\int_{U^{-}_{r/\kappa}} \abs{\vec u-\vec u_{r/\kappa}}^2\,dX+Cr^{n+2}\abs{\vec u_{r/\kappa}}^2\\
\nonumber
&\le C(\epsilon+r^2) \left\{\int_{U^{-}_{r/\kappa}}\abs{D\vec u}^2+ (r/\kappa)^{-2} \abs{\vec u-\vec u_{r/\kappa}}^2\,dX\right\}\\
\label{hyun}
&\qquad\qquad +C_\epsilon r^{(n+2)(1-\frac{2}{q})}\left\{\int_{U^{-}_{r/\kappa}} \abs{D\vec u}^q+\abs{\vec u}^q\,dX\right\}^{2/q}.
\end{align}
Therefore, by combining \eqref{hee}, \eqref{dong}, and \eqref{hyun}, we get the result.
\hfill\qedsymbol

\subsection{Proof of Lemma~\ref{13-lem1}}
The proof is an adaptation of that of \cite[Lemma~3]{Struwe}.
Recall the notations \eqref{lcd}.
Let $\chi \in C^\infty_c(B_r(x_0))$ is a smooth cut-off function satisfying 
\[
0\le \chi \le 1, \quad \chi \equiv 1\; \text{ on }\; B_{r/2}(x_0), \quad \abs{D \chi}\le 4r^{-1}.
\]
and denote
\[
\tilde{\vec u}_r(t):=\int_{\Omega_r}\chi \vec u(\cdot,t) \,\Big/ \int_{\Omega_r}\chi,\quad
\tilde{\vec u}_r:=\fint_{b-r^2}^b \tilde{\vec u}_r(s) \,ds,\quad \vec u_r=\fint_{U_r} \vec u\,dX
\]
Note that by testing with $\chi1_{[s,t]} \left(\bar{\vec u}_r(t)-\bar{\vec u}_r(s)\right)$ in \eqref{eqj.1b},  where $s,t \in (a,b)$, we obtain 
\begin{multline*}
0=\left(\int_{\Omega}\chi \vec u(\cdot,t)- \int_{\Omega}\chi \vec u(\cdot,s)\right)\cdot \left(\tilde{\vec u}_r(t)-\tilde{\vec u}_r(s)\right) \\
+\int_s^t\!\!\!\int_\Omega D_\alpha \chi \vec A^{\alpha\beta}D_\beta \vec u\cdot  \left(\tilde{\vec u}_r(t)-\tilde{\vec u}_r(s)\right)\,dX+\int_s^t\!\!\!\int_{\partial \Omega} \chi \theta \vec u \cdot \left(\tilde{\vec u}_r(t)-\tilde{\vec u}_r(s)\right)\,dS_X.
\end{multline*}
Therefore, by using $cr^n \le \abs{\Omega_r}$ and the trace theorem, we get
\begin{align*}
\Abs{\tilde{\vec u}_r(t)-\tilde{\vec u}_r(s)}^2 
&\le C r^{-n} \Abs{\tilde{\vec u}_r(t)-\tilde{\vec u}_r(s)}
\left(\int_s^t\!\!\!\int_{\Omega_r} r^{-1}\abs{D\vec u}\,dX+ \norm{\theta}_\infty \int_s^t\!\!\!\int_{\partial\Omega}\abs{\chi \vec u}\,dS_X\right)\\
&\le C r^{-n} \Abs{\tilde{\vec u}_r(t)-\tilde{\vec u}_r(s)}
\left(r^{-1}\int_s^t\!\!\!\int_{\Omega_{r}}\abs{D\vec u}\,dX+\int_s^t\!\!\int_{\Omega_r}r^{-1}\abs{\vec u}+\abs{D\vec u}\,dX\right)
\end{align*}
and thus, by H\"older's inequality, for $s,t \in (b-r^2, b)$ and $r<\min(\sqrt{b-a}, \diam \Omega)$, we have
\begin{equation}		\label{13-eq2d}
\Abs{\tilde{\vec u}_r(t)-\tilde{\vec u}_r(s)}
\le C r^{-n/2}\left(\norm{\vec u}_{\sL_2(U^{-}_r)}+\norm{D\vec u}_{\sL_2(U^{-}_r)}\right).
\end{equation}
Now, we note that  
\begin{align}
\nonumber
\int_{U^{-}_r}\Abs{\vec u-\vec u_r}^2\,dX
&\le \int_{U^{-}_r}\Abs{\vec u-\tilde{\vec u}_r}^2\,dX\\
\nonumber
&\le 2\left(\int_{b-r^2}^b\!\int_{\Omega_r} \Abs{\vec u(x,t)-\bar{\vec u}_r(t)}^2+\Abs{\tilde{\vec u}_r(t)-\tilde{\vec u}_r}^2\,dx\,dt\right)\\
\label{13-eq2a}
&\le C r^2\int_{U^{-}_r}\abs{D\vec u}^2 \,dX + C \int_{U^{-}_r}\Abs{ \fint_{b-r^2}^b \left( \tilde{\vec u}_r(t)-\tilde{\vec u}_r(s)\right)\,ds}^2\,dX,
\end{align}
where we used a variant of Poincar\'e's inequality.
Therefore, by combining \eqref{13-eq2d} and \eqref{13-eq2a}, we find that 
\begin{align*}
\int_{U^{-}_r}\Abs{\vec u-\vec u_r}^2 \,dX
&\le C r^2 \int_{U^{-}_r}\abs{D\vec u}^2 \,dX + C r^{-n}\left(\norm{\vec u}_{\sL_2(U^{-}_r)}+\norm{D\vec u}_{\sL_2(U^{-}_r)}\right)^2 \abs{U^{-}_r}\\
&\le C r^2 \int_{U^{-}_r} \abs{\vec u}^2 + \abs{D\vec u}^2 \,dX.
\end{align*}
The proof is complete.
\hfill\qedsymbol

\subsection{Proof of Lemma~\ref{er.lem2}}
Let $\vec u$ be a weak solution of \eqref{er.eq1}.
We shall show that for any $\delta \in (0,1)$, there exist constants $C>0$, and $r_0>0$ such that for any $x_0\in \overline \Omega$ and $0<r<r_0$, we have 
\begin{multline}		\label{er.eq5}
\int_{\Omega_{r/2}(x_0)}\abs{D\vec u}^2+\abs{\vec u}^2\,dx\le \delta \int_{\Omega_r(x_0)}\abs{D\vec u}^2+\abs{\vec u}^2\,dx\\
+C r^{n(1-2/q)} \left(\int_{\Omega_{r}(x_0)}\abs{D\vec u}^q+\abs{\vec u}^q\,dx\right)^{2/q}+Cr^2\int_{\Omega_r(x_0)}\abs{\vec f}^2\,dx,
\end{multline}
where $q=2n/(n+2)$ if $n\ge 3$ and $1<q<2$ if $n=2$.
Then, the inequality \eqref{a1} will follow from a version of Gehring's lemma \cite[Proposition 1.1, p. 122]{Gi83}.
Indeed, set
\[
g(x)= \left\{\abs{D\vec u}^2+ \abs{\vec u}^2\right\}^{\frac{q}{2}}1_\Omega(x)\quad\text{and}\quad F(x)= \norm{\vec f}_{L^n(\Omega)}^q 1_\Omega(x).
\]
Then by \eqref{er.eq5}, we have
\[
\int_{B_{r/2}}g^{2/q}\,dx\le \delta \int_{B_r}g^{2/q}\,dx
+C r^n \left(\fint_{B_r}g \,dx\right)^{2/q}+C\int_{B_r} F^{2/q}\,dx,
\]
and thus, \eqref{a1} will follow.
Let $\zeta$ be a smooth cut-off function satisfying
\[
0\le \zeta \le 1, \quad \zeta \equiv 1\; \text{ on }\; B_{r/2}(x_0),\quad \zeta \equiv 0\; \text{ on }\; \bR^n\setminus B_r(x_0), \quad \abs{D \zeta}\le 8r^{-1},
\]
where $0<r<\diam \Omega$, and denote 
\[
\tilde {\vec u}_r:=\int_{\Omega_r}\zeta^2 \vec u\; \Big/ \int_{\Omega_r} \zeta^2.
\]
Then by testing with $\zeta^2 (\vec u-\tilde{\vec u}_r)$ in \eqref{er.eq1}, we find that 
\begin{multline*}
\int_\Omega \zeta^2 A^{\alpha\beta}D_\beta\vec u\cdot D_\alpha \vec u
+\int_\Omega 2\zeta D_\alpha \zeta A^{\alpha\beta}D_\beta \vec u\cdot (\vec u-\tilde{\vec u}_r)
+\int_{\partial \Omega} \zeta^2\theta \vec u \cdot (\vec u-\tilde{\vec u}_r)\\
=\int_\Omega \zeta^2 \vec f\cdot (\vec u-\tilde{\vec u}_r),
\end{multline*}
and thus, by Cauchy's inequality, we get
\begin{equation}			\label{eq6.24capu}
\int_\Omega \zeta^2 \abs{D\vec u}^2\le Cr^{-2}\int_{\Omega_r} \Abs{\vec u-\tilde{\vec u}_r}^2+Cr^2\int_{\Omega_r}\abs{\vec f}^2+C \norm{\theta}_\infty \int_{\partial \Omega} \zeta^2  \abs{\vec u}\abs{\vec u-\tilde{\vec u}_r}.
\end{equation}
By the trace Sobolev inequality and Cauchy's inequality, for any $\epsilon \in (0,1)$ we have
\begin{align}
\nonumber
\int_{\partial \Omega} \zeta^2 \abs{\vec u}\abs{\vec u-\tilde{\vec u}_r}
&\le C\norm{\zeta\vec u}_{L^2(\partial \Omega)}\norm{\zeta (\vec u-\tilde{\vec u}_r)}_{W^{1,2n/(n+1)}(\Omega)}\\
\nonumber
&\le C r^{1/2}\norm{\zeta\vec u}_{L(\partial \Omega)}\norm{\zeta (\vec u-\tilde{\vec u}_r)}_{W^{1,2}(\Omega)}\\
\nonumber
&\le \epsilon \norm{\zeta (\vec u-\tilde{\vec u}_r)}_{W^{1,2}(\Omega)}^2+ C_\epsilon r\left(\norm{\zeta (\vec u-\tilde{\vec u}_r)}_{L^2(\partial \Omega)}^2+r^{n-1} \abs{\tilde{\vec u}_r}^2\right)\\
\nonumber
&\le (\epsilon+C_\epsilon r) \norm{\zeta (\vec u-\tilde{\vec u}_r)}_{W^{1,2}(\Omega)}^2+ C_\epsilon r^n\abs{\tilde{\vec u}_r}^2\\
\label{jer.eq1b}
&\le (C\epsilon+C_\epsilon r)\left\{r^{-2}\norm{\vec u-\tilde{\vec u}_r}_{L^2(\Omega_r)}^2+\norm{D\vec u}_{L^2(\Omega_r)}^2\right\}+ C_\epsilon r^n\abs{\tilde{\vec u}_r}^2,
\end{align}
where $C_\epsilon$ is a constant that depends on $\epsilon$ as well.
By \eqref{eq6.24capu} and \eqref{jer.eq1b}, we get
\[
\int_{\Omega_{r/2}}\abs{D\vec u}^2
\le \frac{C_\epsilon}{r^2}\int_{\Omega_r} \abs{\vec u-\tilde{\vec u}_r}^2 +(C\epsilon+C_\epsilon r) \int_{\Omega_r} \abs{D\vec u}^2+Cr^2\int_{\Omega_r}\abs{\vec f}^2+C_\epsilon r^n\abs{\tilde{\vec u}_r}^2.
\]
Therefore, by choosing $\epsilon$ and then $r_1$ so small that for all $r \in (0, r_1)$, we have
\begin{equation}		\label{jer.eq1d}
\int_{\Omega_{r/2}}\abs{D\vec u}^2
\le \frac{C}{r^2}\int_{\Omega_r}\abs{\vec u-\tilde{\vec u}_r}^2 +\frac{\delta}{4} \int_{\Omega_r} \abs{D\vec u}^2+Cr^2\int_{\Omega_r}\abs{\vec f}^2+C r^n\abs{\tilde{\vec u}_r}^2.
\end{equation}

Now, we take $p=2$ if $n\ge 3$ and $p \in (1,2)$ if $n=2$ and set
\[
p^*=np/(n-p)\quad \text{and}\quad q=p^*/(p^*-1).
\]
Note that $q=2n/(n+2)$ if $n\ge 3$ and $1<q<2$ if $n=2$.

By using H\"older's inequality and a variant of Poincar\'e's inequality, we have  
\begin{align}
\nonumber
\int_{\Omega_r}\abs{\vec u-\tilde{\vec u}_r}^2
&\le \norm{\vec u-\tilde{\vec u}_r}_{L^{p^*}(\Omega_r)} \norm{\vec u-\tilde{\vec u}_r}_{L^{q}(\Omega_r)} 
\le Cr \norm{D\vec u}_{L^p(\Omega_r)}\norm{D\vec u}_{L^q(\Omega_r)}\\
\nonumber
&\le Cr^{1+n/p-n/2} \norm{D\vec u}_{L^2(\Omega_r)}\norm{D\vec u}_{L^q(\Omega_r)},
\end{align}
and thus, by Cauchy's inequality, we get for any $\epsilon\in (0,1)$ that  
\begin{equation}		\label{er.eq1d}
\int_{\Omega_r}\abs{\vec u-\tilde{\vec u}_r}^2
\le \epsilon r^2\int_{\Omega_r}\abs{D\vec u}^2+C_\epsilon r^{n(1-2/q)+2}\left(\int_{\Omega_r}\abs{D\vec u}^q\right)^{2/q}.
\end{equation}
Therefore, we conclude from \eqref{jer.eq1d} and \eqref{er.eq1d} that for all $r \in (0, r_1)$, we have
\[
\int_{\Omega_{r/2}}\abs{D\vec u}^2\le  \frac{\delta}{2} \int_{\Omega_r}\abs{D\vec u}^2
+C r^{n(1-2/q)}\left(\int_{\Omega_r}\abs{D\vec u}^q+\abs{\vec u}^q\right)^{2/q}+Cr^2\int_{\Omega_r}\abs{\vec f}^2,
\]
where we used the fact that 
\[
\abs{\tilde{\vec u}_r}^2\le C r^{-2n/q}\left(\int_{\Omega_r} \abs{\vec u}^q\right)^{2/q}.
\]
Finally, we apply the inequality
\[
\int_{\Omega_{r/2}}\abs{\vec u}^2\le 2\int_{\Omega_r}\abs{\vec u-\tilde{\vec u}_r}^2+2\abs{\Omega_r} \abs{\tilde{\vec u}_r}^2\le  Cr^2 \int_{\Omega_r}\abs{D\vec u}^2+Cr^{n(1-2/q)}\left(\int_{\Omega_r}\abs{\vec u}^q\right)^{2/q}
\]
to conclude that for all $0<r<r_0\le r_1$, we have
\[
\int_{\Omega_{r/2}} \left(\abs{D\vec u}^2+\abs{\vec u}^2\right) \le (\delta/2+C r_0^2) \int_{\Omega_r}\abs{D\vec u}^2
+C r^{n(1-2/q)}\left(\int_{\Omega_r}\abs{D\vec u}^q+\abs{\vec u}^q \right)^{2/q}+Cr^2\int_{\Omega_r}\abs{\vec f}^2,
\]
which clearly implies \eqref{er.eq5}.
\hfill\qedsymbol

\subsection{Proof of Lemma~\ref{lem5.6er}}
Without loss of generality, we assume that $a=-T$ and $b=0$ so that $Q=\Omega\times (-T,0)$ and $S=\partial\Omega \times (-T,0)$.
The proof is an adaptation of that of \cite[Lemma~4.2]{Kim}.
First, note that \eqref{b9} follows from the energy inequality since we assume the condition (H1).
To prove the rest, we claim that $\vec u_t$ satisfies
\begin{equation}		\label{jk2}
\norm{\vec u_t}_{\sL_2(\Omega\times(-r^2,0))}\le C(R-r)^{-1} \left(\norm{D\vec u}_{\sL_2(\Omega\times(-R^2,0))}+\norm{\vec u}_{\sL_2(\Omega\times (-R^2,0))}\right)
\end{equation}
for all  $0<r<R<\min(\sqrt{T}, \diam\Omega)$.
Take the above inequality for now.
By $t$-independence of the operator, we find that $\vec u_t$ is also a weak solution of \eqref{eq5.38pvd}.
Therefore, by the energy inequalities (cf. \eqref{app.eq1c} -- \eqref{eq5.13hu}) and \eqref{jk2}, we get that
\begin{align*}
\sup_{-(r/2)^2\le s\le 0}\int_{\Omega}\abs{\vec u_t(x,s)}^2\,dx & \le \frac{C}{r^2}\int_{\Omega\times(-(3r/4)^2,0)}\abs{\vec u_t}^2\,dX\\
&\le \frac{C}{r^4}\int_{\Omega\times(-(7r/8)^2,0)}\abs{\vec u}^2+\abs{D\vec u}^2 \,dX \le \frac{C}{r^6}\int_{\Omega\times (-r^2,0)}\abs{\vec u}^2\,dX,
\end{align*}
where we used that $r\le \diam \Omega$.
We have established \eqref{jk6}.
To prove \eqref{deq1}, fix a function $\tau \in C^\infty_c(\bR)$ such that 
\[
0\le \tau\le 1, \quad  \tau(t) =0\; \text{ for }\; \abs{t}\ge r^2, \quad \tau(t)=1 \;\text{ for } \;\abs{t} \le (r/2)^2, \quad \abs{\tau'}\le 8 r^{-2}.
\]
On each slice $\Omega\times \set{s}$, where $-T<s<0$, we have
\begin{multline}		\label{eq6.44snow}
0=\int_{\Omega \times \set{s}} \left(\vec u_t - D_\alpha(A^{\alpha\beta} D_\beta \vec u) \right)\cdot \tau^2 \vec u \,dx= \int_{\Omega \times \set{s}} \tau^2 \vec u_t \cdot \vec u\,dx + \int_{\partial\Omega \times \set{s}} \tau^2 \theta \vec u\cdot \vec u\,dS_x \\
+\int_{\Omega \times \set{s}} \tau^2 A^{\alpha \beta} D_\beta \vec u \cdot D_\alpha \vec u\,dx.
\end{multline}
Then, by Cauchy's inequality and the trace theorem (recall $\tau =\tau(t)$) we get
\begin{align*}
\int_{\Omega\times \set{s}} \!\!\!\tau^2\abs{D\vec u}^2 \,dx
& \le C\int_{\Omega\times \set{s}}\!\!\!\tau^2\abs{\vec u}\abs{\vec u_t}\,dx + C \int _{\partial \Omega \times \set{s}} \!\!\!  \tau^2 \abs{\vec u}^2 \,dS_x  \\
&\le C\int_{\Omega\times \set{s}} \!\!\! \tau^2\abs{\vec u}\abs{\vec u_t}\,dx+ C \int_{\Omega\times \set{s}}\!\!\!  2\tau^2 \abs{\vec u \cdot D\vec u} + \tau^2 \abs{\vec u}^2 \,dx \\
&\le C r^2\int_{\Omega\times \set{s}} \!\!\! \tau^2\abs{\vec u_t}^2\,dx+C(1+ r^{-2})\int_{\Omega\times\set{s}} \!\!\!\tau^2\abs{\vec u}^2\,dx + \frac{1}{2} \int_{\Omega\times \set{s}} \!\!\!  \tau^2 \abs{D\vec u}^2 \,dx,
\end{align*}
and thus, by using \eqref{b9} and \eqref{jk6}, we obtain (recall $r\le \diam \Omega$)
\[
\sup_{-(r/2)^2 <s<0} \int_\Omega \abs{D\vec u(x,s)}^2\,dx \le Cr^{-4}\int_{\Omega\times (-r^2,0)}\abs{\vec u}^2\,dX,
\]
which establishes \eqref{deq1}.

It only remains us to prove the claim \eqref{jk2}, the proof of which is a mere adaptation of that of \cite[Lemma~4.1]{Kim}.
For any $0<r<\rho<\min(\sqrt{T},\diam \Omega)$, let $\zeta=\zeta(t)$ be a smooth function on $\bR$ such that
\[
0\le \zeta\le 1, \quad  \zeta(t) =0\; \text{ for }\; \abs{t}\ge \rho^2, \quad \zeta(t)=1 \;\text{ for } \;\abs{t} \le r^2, \quad \abs{\zeta'}\le 2 (\rho-r)^{-2}.
\]
Similar to \eqref{eq6.44snow}, on each slice $\Omega\times \set{s}$, where $-T<s<0$, we have
\begin{multline*}
0=\int_{\Omega \times \set{s}} \left(\vec u_t - D_\alpha(A^{\alpha\beta} D_\beta \vec u) \right)\cdot \zeta^2 \vec u_t \,dx= \int_{\Omega \times \set{s}} \zeta^2 \abs{\vec u_t}^2\,dx + \int_{\partial\Omega \times \set{s}} \zeta^2 \theta \vec u\cdot \vec u_t\,dS_x \\
+\int_{\Omega \times \set{s}} \zeta^2 A^{\alpha \beta} D_\beta \vec u \cdot D_\alpha \vec u_t\,dx.
\end{multline*}
Therefore, by Cauchy's inequality, we get
\begin{multline}		\label{jk3}
\int_{\Omega\times \set{s}}\zeta^2\abs{\vec u_t}^2\,dx
\le C\int_{\Omega\times \set{s}}\zeta^2\abs{D\vec u}\abs{D\vec u_t}\,dx+C\int_{\partial \Omega\times \set{s}}\zeta^2\abs{\vec u}\abs{\vec u_t}\,dS_x\\
\le \epsilon\int_{\Omega\times \set{s}}\zeta^2\abs{D\vec u_t}^2\,dx+\frac{C}{\epsilon}\int_{\Omega\times \set{s}}\zeta^2\abs{D\vec u}^2\,dx+C\int_{\partial\Omega\times \set{s}}\zeta^2\abs{\vec u}\abs{\vec u_t}\,dS_x.
\end{multline}
Note that by H\"older's inequality and the trace theorem, we have
\begin{multline*}
\int_{\partial\Omega\times \set{s}}\zeta^2\abs{\vec u}\abs{\vec u_t} \,dS_x \le    C\norm{\zeta\vec u(\cdot,s)}_{W^{1,2}(\Omega)}\norm{\zeta\vec u_t(\cdot,s)}_{W^{1,2}(\Omega)} \\
\le  C \left( \norm{\zeta\vec u(\cdot,s)}_{L^2(\Omega)}+\norm{\zeta D\vec u(\cdot,s)}_{L^2(\Omega)}\right) \left( \norm{\zeta\vec u_t(\cdot,s)}_{L^2(\Omega)}+\norm{\zeta D\vec u_t(\cdot,s)}_{L^2(\Omega)}\right).
\end{multline*}
By the above inequality and Cauchy's inequality, we have 
\begin{multline}		\label{jk4}
\int_{\partial\Omega \times \set{s}}\zeta^2\abs{\vec u}\abs{\vec u_t} \,dS_x \le 2\epsilon'\norm{\zeta\vec u_t(\cdot,s)}^2_{L^2(\Omega)}+\frac{C}{\epsilon'}\norm{\zeta\vec u(\cdot,s)}^2_{L^2(\Omega)}+
\frac{C}{\epsilon'}\norm{\zeta D\vec u(\cdot,s)}_{L^2(\Omega)}^2\\
+2\epsilon\norm{\zeta D\vec u_t(\cdot,s)}^2_{L^2(\Omega)}
+\frac{C}{\epsilon}\norm{\zeta\vec u(\cdot,s)}^2_{L^2(\Omega)}
+\frac{C}{\epsilon}\norm{\zeta D\vec u(\cdot,s)}^2_{L^2(\Omega)}. 
\end{multline}
Therefore,  by combining \eqref{jk3} and \eqref{jk4} and integrating over $(-T, 0)$, we get
\begin{multline}		\label{jk5}
\int_{Q}\zeta^2\abs{\vec u_t}^2\,dx \le 3\epsilon \int_{Q}\zeta^2\abs{D\vec u_t}^2\,dX
+2\epsilon'\int_{Q}\zeta^2\abs{\vec u_t}^2\,dX\\
+C\left(\frac{1}{\epsilon}+\frac{1}{\epsilon'}\right)\int_{Q}\zeta^2 \left(\abs{\vec u}^2+ \abs{D\vec u}^2\right) dX,
\end{multline}
Since $\vec u_t$ also satisfies \eqref{eq5.38pvd}, Caccioppoli type inequality together with the property of $\zeta$ yield that 
\[
\int_{Q}\zeta^2\abs{D\vec u_t}^2 \,dX \le \frac{C'}{(\rho-r)^2}\int_{\Omega\times (-R^2,0)}\abs{\vec u_t}^2\,dX,
\]
and thus, we derive from \eqref{jk5} that 
\begin{multline*}
\int_{\Omega\times (-r^2,0)}\abs{\vec u_t}^2\,dX
\le \left(\frac{3\epsilon C'}{(\rho-r)^2}+2\epsilon'\right)\int_{\Omega\times (-\rho^2,0)}\abs{\vec u_t}^2 \,dX\\
+C\left(\frac{1}{\epsilon}+\frac{1}{\epsilon'}\right)\int_{\Omega\times (-\rho^2,0)} \abs{\vec u}^2 +\abs{D\vec u}^2\,dX.
\end{multline*}
If we set $\epsilon=(\rho-r)^2/12C'$ and $\epsilon'=1/8$ in the above, we get 
\[
\int_{\Omega\times (-r^2,0)}\abs{\vec u_t}^2 \,dX \le \frac{1}{2}\int_{\Omega\times (-\rho^2,0)}\abs{\vec u_t}^2 \,dX+\frac{C}{(\rho-r)^2}\int_{\Omega\times (-\rho^2,0)}\abs{D\vec u}^2+ \abs{\vec u}^2 \,dX,
\]
where we used that $\rho-r \le \diam \Omega$.
Then by using an iteration method (see \cite[Lemma~3.1, p. 161]{Gi83}), we obtain \eqref{jk2} from the above inequality.
\hfill\qedsymbol

\begin{acknowledgment}
We thank  Fritz Gesztesy for helpful discussion and correspondence.
Jongkeun Choi is partially supported by the Yonsei University Research Fund No. 2014-12-0003.
\end{acknowledgment}


\end{document}